\definecolor{linkred}{rgb}{0.75,0,0}
\definecolor{linkblue}{rgb}{0,0,0.75}
\theoremstyle{plain}
\newtheorem{theorem}{Theorem}
\newtheorem{deflem}{Definition/Lemma}
\newtheorem{proposition}{Proposition}[section]
\newtheorem{lemma}[proposition]{Lemma}
\newtheorem{conjecture}{Conjecture}
\newtheorem{corollary}[proposition]{Corollary}
\newcommand{\bt}{\begin{theorem}}
\newcommand{\et}{\end{theorem}}
\theoremstyle{definition}
\newtheorem{definition}[proposition]{Definition}
\newtheorem{remark}[proposition]{Remark}
\newcommand{\beq}{\begin{equation}}
\newcommand{\eeq}{\end{equation}}
\newcommand{\cal}{\mathcal}
\newcommand{\cc}{\mathcal{C}}
\newcommand{\ce}{\mathcal{E}}
\newcommand{\co}{\mathcal{O}}
\newcommand{\cf}{{\cal F}}
\newcommand{\bc}{\mathbb{C}}
\newcommand{\be}{\mathbb{E}}
\newcommand{\bj}{\mathbb{J}}
\newcommand{\bk}{\mathbb{K}}
\newcommand{\bl}{\mathbb{L}}
\newcommand{\bn}{\mathbb{N}}
\newcommand{\bp}{\mathbb{P}}
\newcommand{\bq}{\mathbb{Q}}
\newcommand{\bz}{\mathbb{Z}}
\newcommand{\modm}{\cal M}
\newcommand{\un}{1\!\!1}
\newcommand{\dtt}{\mathbf{t}}
\newcommand{\pd}[2]{\frac{\partial#1}{\partial#2}}
\newcommand{\ck}{\mathcal{K}}
\newcommand{\cw}{\mathcal{W}}
\begin{document}
	
\title[Polynomial relations among kappa classes]{Polynomial relations among kappa classes on the moduli space of curves}
\author{Maxim Kazarian}
\address{National Research University Higher School of Economics, Skolkovo Institute of Science and Technology}
\email{\href{mailto:kazarian@mccme.ru}{kazarian@mccme.ru}}
\author{Paul Norbury}
\address{School of Mathematics and Statistics, University of Melbourne, VIC 3010, Australia}
\email{\href{mailto:norbury@unimelb.edu.au}{norbury@unimelb.edu.au}}
\thanks{}
\subjclass[2020]{14H10; 14D23; 32G15}
\date{\today}

\begin{abstract}
We construct an infinite collection of universal---independent of $(g,n)$---polynomials in the Miller-Morita-Mumford classes $\kappa_m\in H^{2m}( \overline{\cal M}_{g,n},\bq)$, defined over the moduli space of genus $g$ stable curves with $n$ labeled points.  We conjecture vanishing of these polynomials in a range depending on $g$ and $n$.
\end{abstract}

\maketitle

\tableofcontents

\section{Introduction}

Let $\overline{\modm}_{g,n}$ be the moduli space 
of genus $g$ stable curves---curves with only nodal singularities and finite automorphism group---with $n$ labeled points disjoint from nodes, for any $g,n\in\bn=\{0,1,2,...\}$ satisfying $2g-2+n>0$.   Define the forgetful map $\pi:\overline{\modm}_{g,n+1}\to\overline{\modm}_{g,n}$ that forgets the last point, and define $\omega_\pi$ to be the relative dualising sheaf of the map $\pi$.  There are natural sections $p_i:\overline{\modm}_{g,n}\to\overline{\modm}_{g,n+1}$ of the map $\pi$ for each $i=1,...,n$.   Define $L_i= p_i^*\omega_\pi$ to be the line bundle $L_i\to\overline{\modm}_{g,n}$ with fibre above $[(C,p_1,\ldots,p_n)]$ given by $T_{p_i}^*C$ and $\psi_i=c_1(L_i)\in H^{2}(\overline{\modm}_{g,n},\bq) $ to be its first Chern class.  Define the Hodge bundle $\be_g=\pi_*\omega_\pi$ which is a rank $g$  bundle over $\overline{\modm}_{g,n}$ and $\lambda_i=c_i(\be_g)\in H^{2i}(\overline{\modm}_{g,n},\bq) $ to be its $i$th Chern class for $i=1,...,g$.  Define the Miller-Morita-Mumford classes $\kappa_m=\pi_*\psi^{m+1}_{n+1}\in H^{2m}( \overline{\cal M}_{g,n},\bq)$ where $\pi$ is the forgetful map $\overline{\modm}_{g,n+1}\stackrel{\pi}{\longrightarrow}\overline{\modm}_{g,n}$.  We will refer to $\psi_i$ and $\kappa_m$ as $\psi$ classes, respectively $\kappa$ classes.

\subsection{The construction}\label{sec:constr}
Consider $\{s_i\in\bq\mid i>0\}$ defined by
$$\exp\left(-\sum_{i>0} s_it^i\right)=\sum_{k=0}^\infty(-1)^k(2k+1)!!t^k
$$
and $\{\sigma_i\in\bq\mid i>0\}$ defined by
$$\exp\left(-\sum_{i>0} \sigma_it^i\right)=\sum_{k=0}^\infty(-1)^kk!t^k.
$$
Define
$$\bk=K_0+K_1+K_2+...=\exp(\sum s_i\kappa_i)\in H^*(\overline{\modm}_{g,n},\bq)$$
and
$$\bj=J_0+J_1+J_2+...=\exp(\sum \sigma_i\kappa_i)\in H^*(\overline{\modm}_{g,n},\bq)$$
where $K_m,J_m\in H^{2m}(\overline{\modm}_{g,n},\bq)$.
This defines two sequences of homogeneous polynomials in $\kappa_m$.  The first few polynomials are given by:
$$K_1=3\kappa_1,\quad K_2=\frac32(3\kappa_1^2-7\kappa_2),\quad K_3=\frac32(3\kappa_1^3-21\kappa_1\kappa_2+46\kappa_3)$$
$$K_4=\frac98(3\kappa_1^4-42\kappa_1^2\kappa_2+49\kappa_2^2+184\kappa_1\kappa_3-562\kappa_4)
$$
and
$$J_1=\kappa_1,\quad J_2=\frac12(\kappa_1^2-3\kappa_2),\quad J_3=\frac{1}{6}(\kappa_1^3-9\kappa_1\kappa_2+26\kappa_3)$$
$$J_4=\frac{1}{24}(\kappa_1^4-18\kappa_1^2\kappa_2+27\kappa_2^2+104\kappa_1\kappa_3-426\kappa_4).
$$
\begin{conjecture}  \label{conj}
The following polynomial relations among the $\kappa_j$ hold:
\begin{align}
K_m(\kappa_1,...,\kappa_m)&=0\quad\text{for}\quad m>2g-2+n\quad\text{except}\quad (m,n)=(3g-3,0); \label{Krel}\\
J_m(\kappa_1,...,\kappa_m)&=0\quad\text{for}\quad \left\{\begin{array}{ll}m>2g-2+n&\\
m=2g-2+n,&n>1.\end{array}\right.\label{Jrel}
\end{align}
\end{conjecture}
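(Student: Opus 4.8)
The plan is to convert the purely $\kappa$-theoretic statement into a statement about forgetful push-forwards of $\psi$-classes, and then to recognise the resulting generating classes as translation cohomological field theories coming from two explicit integrable hierarchies, whose cohomological degree is geometrically bounded. First I would rewrite $\bk$ and $\bj$ as push-forwards. Using the Arbarello--Cornalba dictionary $\pi_{N*}\big(\psi_{n+1}^{a_1+1}\cdots\psi_{n+N}^{a_N+1}\big)=\sum_{\sigma\in S_N}\prod_{c}\kappa_{\sum_{i\in c}a_i}$ (the product being over the cycles $c$ of $\sigma$) together with the exponential formula, one obtains
\[
\bk=\sum_{N\ge0}\frac{1}{N!}\,\pi_{N*}\prod_{j=1}^N f(\psi_{n+j}),\qquad \pi_N:\overline{\modm}_{g,n+N}\to\overline{\modm}_{g,n},
\]
with insertion $f(\psi)=\psi\big(1-e^{-\sum_i s_i\psi^i}\big)=\sum_{a\ge1}(-1)^{a+1}(2a+1)!!\,\psi^{a+1}$, and likewise for $\bj$ with $f(\psi)=\sum_{a\ge1}(-1)^{a+1}a!\,\psi^{a+1}$; the correspondence between the coefficients and the dilaton shift is exactly $1-X(t)=e^{-\sum_i s_it^i}$, which is where the two factorial series enter.

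Two structural identities make this rewriting useful. The relation $\kappa_i=\pi^*\kappa_i+\psi_{n+1}^i$ exponentiates to the forgetful law
\[
\bk_{g,n+1}=\pi^*\bk_{g,n}\cdot\exp\Big(\sum_i s_i\psi_{n+1}^i\Big),
\]
and the additivity $\xi^*\kappa_i=\kappa_i\otimes1+1\otimes\kappa_i$ on a boundary gluing map $\xi$ gives the multiplicativity $\xi^*\bk=\bk\boxtimes\bk$. Thus $\{\bk_{g,n}\}$ and $\{\bj_{g,n}\}$ are the simplest cohomological field theories obtained by translating the trivial theory (trivial $R$-matrix, dilaton shift $\sum_i s_i z^i$, resp. $\sum_i \sigma_i z^i$). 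The forgetful law does not close the induction on its own -- extracting degree $m$ leaves a sum $\sum_{j\ge1}e_j\psi_{n+1}^{\,j}\,\pi^*K_{m-j}$ in which the surviving $K_{m-j}$ are the nonvanishing low classes -- but it organises the $n$-dependence and makes $n=0$ the primary case, the remaining input being companion relations weighted by $\psi$-classes.

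Second, I would match the two series to geometric theories by their degree behaviour. The $(2a+1)!!$-series is the Witten--Kontsevich/KdV normalisation, whose genus-$g$ theory carries a nonzero top intersection over $\overline{\modm}_g$; this should account for $\bk$ retaining exactly the class $K_{3g-3}$ at $n=0$ while vanishing throughout the intermediate range and in all codimension once $n\ge1$. The codimension of Norbury's class $\Theta_{g,n}\in H^{2(2g-2+n)}(\overline{\modm}_{g,n},\bq)$ is precisely $2g-2+n$, matching the conjectural top degree of $\bj$; I would therefore aim to identify $\bj$ with the $\Theta$/Brezin--Gross--Witten side, so that $J_m=0$ for $m>2g-2+n$ becomes a codimension bound and the extra vanishing at $m=2g-2+n$ for $n>1$ reflects the behaviour of the extremal term under stabilisation. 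Givental--Teleman reconstruction would then realise $\bk$ and $\bj$ as the $\kappa$-decorations attached to these theories, and a complementary sanity check is to place $K_m,J_m$ inside the span of Pixton's $3$-spin relations and to confirm the cases $\overline{\modm}_{0,n}$ and small $g$ directly.

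The hard part will be upgrading these identifications to honest equalities of classes on all of $\overline{\modm}_{g,n}$, boundary included, rather than numerical identities or statements on the open locus $\modm_{g,n}$. The genuine difficulty is that $\bk,\bj$ are trivial-$R$ translation theories, whereas the geometric $\Theta$ theory carries a nontrivial $R$-matrix, so the desired degree bound cannot simply be imported; one must show that the specific factorial coefficients force the relevant $\psi$-intersection combinations to vanish in codimension $>2g-2+n$. Concretely, this bound improves the naive dimension bound $3g-3+n$ by exactly $g-1$, a ``Hodge-type'' gain that should ultimately come from the Euler-class realisation of $\Theta_{g,n}$ in codimension $2g-2+n$ and from Mumford's relations for $\be_g$. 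Establishing that the two series produce precisely this codimension bound, and separating the vanishing middle range from the surviving extremal classes in the two exceptional situations, is where I expect the real obstacle to lie.
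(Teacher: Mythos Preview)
The statement you are attempting is Conjecture~\ref{conj} of the paper, and the paper does \emph{not} prove it: it remains open. The paper's own contribution is to prove the numerical vanishing of Theorem~\ref{numeric}, and to show (Propositions~\ref{JHodge} and~\ref{Kpsi}) that the conjecture would follow from the separate Conjectures~\ref{conjhodge}, \ref{eigenfunction} and~\ref{KTheta}, which are themselves open. So your proposal should be read as a strategy rather than a proof, and even as a strategy it contains a significant mis-identification.

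You have the two geometric matchings exactly reversed. In the paper it is $\bk$, the $(2k+1)!!$ series, that is tied to the Br\'ezin--Gross--Witten tau function and to the classes $\Theta_{g,n}$: Theorem~\ref{limBGW} proves that the generating function $F^K$ built from the top pieces $K_{2g-2+n}$ equals $\log Z^{\text{BGW}}$, and Conjecture~\ref{KTheta} asserts $K_{2g-2+n}=\Theta_{g,n}$. Conversely, it is $\bj$, the $k!$ series, that is conjecturally identified with Hodge classes via $J_{2g-2+n}=(-1)^{g-1}\lambda_{g-1}\lambda_g$ on $\overline{\modm}_{g,1}$ (Conjecture~\ref{conjhodge}); this is what forces the extra vanishing $J_{2g-2+n}=0$ for $n>1$, since $\lambda_g$ pulls back to zero along the irreducible boundary. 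Your argument that ``the $(2a+1)!!$ series is the Witten--Kontsevich/KdV normalisation whose top intersection survives'' conflates two different facts: both $\bk$ and $\bj$ arise from the Kontsevich--Witten potential by a shift of times (Theorem~\ref{KWshift}), and it is only the $\epsilon\to0$ limit of the $\bk$-rescaling that lands on the BGW hierarchy. The asymmetry between the two conjectured vanishing ranges --- the exceptional survival of $K_{3g-3}$ at $n=0$ versus the extra vanishing of $J_{2g-2+n}$ for $n>1$ --- is explained in the paper by the different push-forward weights in~\eqref{pushf}: $\pi_*K_{m+1}=(6g-6+3n-2m)K_m$ has its zero at $m=3g-3$, whereas $\pi_*J_{m+1}=(2g-2+n-m)J_m$ never obstructs the $n=0$ descent.

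With the identifications corrected, your overall plan coincides with the paper's: recognise $\bk$ and $\bj$ as invertible CohFTs, use the restriction behaviour under gluing to reduce to the top piece $K_{2g-2+n}$ or $J_{2g-2+n}$ (Proposition~\ref{equivconj}), and then invoke a geometric model for that top piece. But as you yourself note at the end, the step from ``numerical identification via topological recursion'' to ``equality of cohomology classes on all of $\overline{\modm}_{g,n}$'' is precisely the gap the paper leaves open.
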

\noindent The conjecture is trivially true when $m>3g-3+n$, and hence says nothing about $g=0$.  Evidence for the conjecture is given by the following theorem.   

\begin{theorem}  \label{numeric}
For any polynomial $P(\psi_1,...,\psi_n)\in H^{2m}(\overline{\mathcal{M}}_{g,n})$ with $n>0$ and $m<g-1$,
\[
\int_{\overline{\modm}_{g,n}}\bk\cdot P(\psi_1,...\psi_n)=0.
\]
\end{theorem}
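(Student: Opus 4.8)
The plan is to convert the $\kappa$-integral into a pure $\psi$-integral over moduli of more-pointed curves, and then to recognise the resulting combination of intersection numbers as a Brezin--Gross--Witten (equivalently $\Theta$-class) correlator, whose degree forces the vanishing. The first step is to re-express $\bk$ as a pushforward. Writing $\pi_r\colon\overline{\modm}_{g,n+r}\to\overline{\modm}_{g,n}$ for the map forgetting the last $r$ points, the Kaufmann--Manin--Zagier/Arbarello--Cornalba formula
$$\pi_{r*}\Big(\prod_{i=1}^r\psi_{n+i}^{a_i+1}\Big)=\sum_{\sigma\in S_r}\ \prod_{c\ \text{cycle of}\ \sigma}\kappa_{\sum_{i\in c}a_i}$$
combined with the exponential formula for the symmetric group gives
$$\bk=\exp\Big(\sum_{i>0}s_i\kappa_i\Big)=\sum_{r\ge0}\frac{1}{r!}\,\pi_{r*}\Big(\prod_{i=1}^r\phi(\psi_{n+i})\Big),\qquad \phi(z)=z\,\Phi(z),$$
where $\Phi$ is fixed by $1-\Phi(z)=\exp(-\sum_{i>0}s_iz^i)$. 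By the defining relation for the $s_i$ this reads $\Phi(z)=\sum_{k\ge1}(-1)^{k-1}(2k+1)!!\,z^k$, so $\phi(z)=3z^2-15z^3+105z^4-\cdots$ is divisible by $z^2$; one checks against $K_1=3\kappa_1$ and $K_2=\tfrac32(3\kappa_1^2-7\kappa_2)$ that the transform is correct.

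Next I would integrate against $P$ and apply the projection formula $\int_{\overline{\modm}_{g,n}}\pi_{r*}(A)\cdot P=\int_{\overline{\modm}_{g,n+r}}A\cdot\pi_r^*P$. The crucial observation is that the correction $\psi_j-\pi_r^*\psi_j$, a sum of boundary divisors $D_{\{j\}\cup S}$ each carrying a genus-$0$ bubble with the markings $j$ and $S\subseteq\{n+1,\dots,n+r\}$, dies against $\prod_i\phi(\psi_{n+i})$: this product is divisible by $\prod_{i\in S}\psi_{n+i}^2$, a class pulled back from the bubble $\overline{\modm}_{0,|S|+2}$ of degree $2|S|$, which exceeds $\dim\overline{\modm}_{0,|S|+2}=|S|-1$ and so vanishes. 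This is exactly where divisibility of $\phi$ by $z^2$ is used. Hence one may replace $\pi_r^*\psi_j$ by $\psi_j$ throughout, obtaining
$$\int_{\overline{\modm}_{g,n}}\bk\cdot P=\sum_{r\ge0}\frac{1}{r!}\int_{\overline{\modm}_{g,n+r}}\prod_{i=1}^r\phi(\psi_{n+i})\cdot P(\psi_1,\dots,\psi_n),$$
a finite sum of Witten--Kontsevich numbers.

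Finally I would identify this sum. Reading off a monomial $\psi_1^{b_1}\cdots\psi_n^{b_n}$ of $P$, the right-hand side is precisely the corresponding $n$-point, genus-$g$ correlator of the Witten--Kontsevich free energy with times shifted by $t_k\mapsto t_k+(-1)^k(2k-1)!!$ for $k\ge2$. This shift carries the Witten--Kontsevich tau function to the Brezin--Gross--Witten tau function, whose correlators are the intersection numbers $\int_{\overline{\modm}_{g,n}}\Theta_{g,n}\cdot P(\psi)$ of Norbury's class $\Theta_{g,n}\in H^{2(2g-2+n)}(\overline{\modm}_{g,n})$. Thus $\int_{\overline{\modm}_{g,n}}\bk\cdot P=\int_{\overline{\modm}_{g,n}}\Theta_{g,n}\cdot P$, and since $\Theta_{g,n}$ has complex codimension $2g-2+n$ while $P$ has complex degree $m$, the product lies in degree $2g-2+n+m<3g-3+n=\dim_{\bc}\overline{\modm}_{g,n}$ whenever $m<g-1$, so the integral vanishes.

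The main obstacle is this last identification: rigorously pinning down that the shift by the constants $(2k-1)!!$ takes the Witten--Kontsevich tau function to the Brezin--Gross--Witten one, equivalently that the combination of $\psi$-numbers produced above assembles into $\Theta$-class correlators. I would establish this through the KdV/Virasoro characterisation of the two tau functions, verifying that the shifted Virasoro constraints match those of the $\Theta$-class generating series; the overall normalisation is fixed by the single check $\int_{\overline{\modm}_{1,1}}\bk=3\int_{\overline{\modm}_{1,1}}\kappa_1=\tfrac18=\int_{\overline{\modm}_{1,1}}\Theta_{1,1}$, and the first genuinely nontrivial instance $\int_{\overline{\modm}_{2,1}}\bk=0$ can be confirmed directly from the genus-two one- two- and three-point numbers.
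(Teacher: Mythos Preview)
Your first two steps are fine: the pushforward expression for $\bk$ and the replacement of $\pi_r^*\psi_j$ by $\psi_j$ are correct and indeed recover the Manin--Zograf shift formula $\cf(\hbar,t)=F^{\mathrm{KW}}(\hbar,t_0,t_1,t_2-h_1,\dots)$ at the specialisation $h_k=(-1)^k(2k+1)!!$. The fatal gap is in your identification step. The time-shift you describe does \emph{not} send the Kontsevich--Witten tau function to the Br\'ezin--Gross--Witten tau function: it sends it to the full $\bk$-potential, which packages intersection numbers of \emph{all} components $K_m$ of $\bk$, not just $K_{2g-2+n}$. Concretely, the shifted potential is governed by the spectral curve $x=\tfrac12z^2$, $y=z/(1+z^2)$, and satisfies the Virasoro constraints $\bigl((2m+1)!!\,\partial_{t_m}-L_{m-1}-L_m\bigr)e^{\hbar^{-1}\cf}=0$, whereas BGW has $y=1/z$ and satisfies $\bigl((2m+1)!!\,\partial_{t_m}-L_m\bigr)e^{\hbar^{-1}F}=0$; so the very Virasoro characterisation you propose to invoke would distinguish them. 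Your claimed equality $\int\bk\cdot P=\int\Theta_{g,n}\cdot P$ is therefore false in general: for instance $\int_{\overline{\modm}_{2,1}}\bk\cdot\psi_1^2=\int K_2\,\psi_1^2=\tfrac{107}{1920}$, while $\int\Theta_{2,1}\cdot\psi_1^2=0$ for degree reasons. In the range $m<g-1$ both sides do vanish, but the left-hand vanishing is exactly the content of the theorem---so your argument is circular.

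The paper's proof avoids this by introducing a parameter $\epsilon$ that separates the components by degree, replacing $\bk$ by $\sum_m\epsilon^{2g-2+n-m}K_m$ so that the terms with $m>2g-2+n$ carry negative powers of~$\epsilon$. The corresponding spectral curve has $y=z/(z^2+\epsilon)$; the key point is that $y^{-1}=z+\epsilon z^{-1}$ is manifestly regular in~$\epsilon$, and this regularity propagates through topological recursion to all $w_{g,n}$. The BGW tau function then arises as the $\epsilon\to0$ limit \emph{after} this vanishing is established, not as an input to it. (Separately, even granting the correct identification, replacing BGW correlators by $\Theta$-class integrals requires $K_{2g-2+n}=\Theta_{g,n}$, which in the paper is a conjecture, not a theorem.)
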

Theorem~\ref{numeric} is the first step towards a proof of a weak version of Conjecture~\ref{conj} involving the tautological ring $RH^m(\overline{\modm}_{g,n})\subset H^{2m}(\overline{\modm}_{g,n},\bq)$, \cite{FPaCon}---see Corollary~\ref{regweak} in Section~\ref{sectcf}.  An analogous statement for $\int_{\overline{\modm}_{g,n}}\bj\cdot P(\psi_1,...\psi_n)$ is reduced to a purely combinatorial statement, without reference to the moduli space of stable curves, in Section~\ref{Jweak}.  

The construction of the polynomials in $\kappa$ classes $J_m$ and $K_m$ resembles the following construction of polynomials in $\kappa$ classes used to produce polynomial relations among $\kappa$ classes over the moduli space of curves of compact type $\modm_{g,n}^{\text{ct}}$.  A stable curve is of compact type if its dual graph is a tree, and we have:
\[\modm_{g,n}\subset\modm_{g,n}^{\text{ct}}\subset\overline{\modm}_{g,n}.
\]
In \cite{PanKap}, Pandharipande found relations among $\kappa$ classes over the moduli space of curves of compact type $\modm_{g,n}^c$.  Pixton \cite{PixTau} found explicit formulae for these relations in a form strikingly similar to the construction of $\bj$ and $\bk$.  Define $\{p_i\in\bq\mid i>0\}$ by
$$\exp\left(-\sum_{i>0}p_it^i\right)=\sum_{k=0}^\infty(-1)^k(2k-1)!!t^k
$$
where $(-1)!!=1$.  Define
\[\bp=P_0+P_1+P_2+...=\exp(\sum p_i\kappa_i)\in H^*(\modm_{g,n}^{\text{ct}},\bq)\]
where $P_m\in H^{2m}(\overline{\modm}_{g,n},\bq)$.
\begin{theorem}[Pixton 2013]   \label{pixton}
\[P_m=0\in H^{2m}(\modm_{g,n}^{\text{ct}},\bq),\quad 2m>2g-2+n.\]
\end{theorem}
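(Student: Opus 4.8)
The plan is to realize the series $\bp$ as the pushforward of a genuinely geometric vanishing on the universal Jacobian, exploiting that a stable curve lies in $\modm_{g,n}^{\text{ct}}$ precisely when its (generalized) Jacobian is an abelian variety rather than merely a semi-abelian variety --- this is exactly why the statement is false off compact type, and it is the feature I would put at the center of the argument. Over $\modm_{g,n}^{\text{ct}}$ one has the universal principally polarized abelian scheme $\pi_J\colon\mathcal{J}\to\modm_{g,n}^{\text{ct}}$ of relative dimension $g$, carrying a relative theta class $\theta\in H^2(\mathcal{J},\bq)$ restricting to the polarization on each fibre. First I would record the two structural inputs: since each fibre is $g$-dimensional, $\theta^{g+1}$ restricts to zero on fibres, and the Poincar\'e formula gives $\pi_{J*}(\theta^{g})=g!$. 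I would then set up the Abel--Jacobi map from the universal curve into $\mathcal{J}$ (using the marked points to rigidify base points) and note how $\psi_{n+1}$, and hence $\kappa_m=\pi_*\psi_{n+1}^{m+1}$, interacts with $\theta$ and the section classes.

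The second step is to translate between $\theta$-powers and the pure $\kappa$ classes. The natural tool is Mumford's Grothendieck--Riemann--Roch computation of the Hodge bundle $\be_g$, whose determinant governs the principal polarization and whose Chern character is $\mathrm{ch}(\be_g)=g+\sum_{l\ge1}\tfrac{B_{2l}}{(2l)!}\kappa_{2l-1}+(\text{$\psi$ and boundary terms})$. On compact type every node is separating, so the boundary correction is supported on loci where $\be_g$ splits as $\be_{g'}\oplus\be_{g''}$; this is the simplification that makes $\mathcal{J}$ proper and the relevant pushforwards computable, and it is where the hypothesis does real work. Combining the fibrewise relation $\theta^{g+1}=0$ with this dictionary, I would express fibre-integrals of powers of $\theta$ as universal polynomials in the $\kappa_m$, so that the vanishing of the appropriate pushforward yields the asserted identities $P_m=0$.

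The coefficients are where the $(2k-1)!!$ must appear, and here the abelian-variety picture is suggestive: integrating powers of a polarization against the ``Gaussian'' fibre geometry of a principally polarized abelian variety produces exactly the moments $\int x^{2k}e^{-x^2/2}\,dx/\sqrt{2\pi}=(2k-1)!!$. I expect the generating identity $\exp\!\big(-\sum_i p_it^i\big)=\sum_k(-1)^k(2k-1)!!\,t^k$ to emerge as the formal (asymptotic) solution recording these moments --- equivalently, as the Givental R-matrix attached to the Hodge, i.e.\ Gaussian, twist of the trivial cohomological field theory, which is the conceptual reason double factorials rather than Bernoulli numbers govern $\bp$ (note that $p_2\neq0$, so $\bp$ is not simply built from the odd classes $\kappa_{2l-1}$ of $\mathrm{ch}(\be_g)$). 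The dependence of the bound on $n$ should enter through the marked-point sections, which impose $n$ further constraints in the Abel--Jacobi map; I would try to formalize this by an induction on $(g,n)$ along the forgetful maps, using $\pi^*\kappa_a=\kappa_a-\psi_{n+1}^a$ to track how each added point shifts the degree threshold $2g-2+n$.

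The main obstacle I anticipate is twofold. First, exact coefficient matching: showing that the universal polynomial produced by $\theta^{g+1}=0$ is precisely $\exp(\sum_i p_i\kappa_i)$ with the prescribed double-factorial series, and not merely some relation of the correct degree, requires a careful Gaussian/R-matrix computation rather than a dimension count. Second, and more delicate, is controlling the separating-boundary terms in both Mumford's formula and the Abel--Jacobi pushforward: one must verify that these corrections cancel or are absorbed into lower-genus instances of the same relation, so that the induction on $(g,n)$ actually closes. Bridging the gap between the naive rank bound $c_i(\be_g)=0$ for $i>g$ (which only yields vanishing from degree $2g+2$) and the sharper threshold asserted here (which already begins in degree $2g$ when $n=0$) is exactly what this finer bookkeeping must accomplish, and I expect it, not the Jacobian geometry, to be the heart of the work.
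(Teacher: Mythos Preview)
The paper itself does not prove this theorem: it is quoted as a result of Pixton, with the remark that it ``appears in \cite[Section 3.4]{PixTau}, see also \cite[Section 3.5]{PanCal}, as part of a more general collection of relations.'' So there is no in-paper proof to compare against; the relevant benchmark is the Pandharipande--Pixton approach in those references. There, the relations on $\modm_{g,n}^{\text{ct}}$ are not obtained from Jacobian geometry at all. Pandharipande produces $\kappa$-relations on compact type via the virtual geometry of stable quotients / stable maps, and Pixton identifies the explicit exponential generating series $\exp(-\sum p_it^i)=\sum_k(-1)^k(2k-1)!!t^k$ by packaging these into his tautological-relation formalism (the same machine that, via the 3-spin CohFT, yields the full Pixton relations). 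The double factorials arise there from the Givental $R$-matrix / hypergeometric bookkeeping, not from fibre integrals of a theta class.

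Your Jacobian idea is an appealing heuristic --- compact type is exactly where $\mathcal{J}$ is an abelian scheme --- but as written it has a genuine gap at the point you yourself flag, and I do not see a way past it along this route. The input $\theta^{g+1}|_{\text{fibre}}=0$ is a single fibrewise vanishing; pushing it down yields one relation per choice of cohomology class on $\mathcal{J}$ that you pair it with, and the classes available (powers of $\theta$, section classes, Hodge classes) do not visibly generate the specific sequence $P_m$ with the correct coefficients. Your ``Gaussian moments'' analogy explains why $(2k-1)!!$ is a plausible number, but it is not a computation: the analogy identifies the moments of the \emph{fibre} polarization, whereas what you need is an identity among \emph{base} classes, and the passage between the two requires exactly the GRR/boundary bookkeeping you defer. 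Moreover, the degree count does not close: $\theta^{g+1}=0$ most directly constrains degree $\ge g+1$, while the theorem already bites at $m=g$ when $n=0$ (indeed $2m>2g-2$ gives $m\ge g$) and tightens further with $n$; your suggested induction via $\pi^*\kappa_a=\kappa_a-\psi_{n+1}^a$ moves the threshold in the wrong direction unless you have an independent source of relations at each step. In short, the proposal is a plan for producing \emph{some} relations on compact type, not a mechanism that pins down $P_m$; the actual proofs bypass the Jacobian entirely and obtain the exact series from stable-quotient / CohFT localization.
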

Theorem~\ref{pixton} appears in \cite[Section 3.4]{PixTau}, see also \cite[Section 3.5]{PanCal}, as part of a more general collection of relations.

Due to the natural restriction properties of the classes $\bk$ and $\bj$ to lower dimensional strata in the moduli space of stable curves, the conjectural vanishing of the polynomials $K_m$ and $J_m$ can be reduced to properties of the (non-vanishing) polynomials $K_{2g-2+n}$ and $J_{2g-2+n}$.  This is proven in Propositions~\ref{JHodge} and \ref{Kpsi} and motivates Conjectures~\ref{conjhodge}, \ref{eigenfunction} and \ref{KTheta} below.   

The following two conjectures seek to characterise the polynomials $J_{2g-2+n}$ and $K_{2g-2+n}$ in such a way to give a possible mechanism for the proof of Conjecture~\ref{conj}.   As usual $\kappa_i$, hence also $J_m$ and $K_m$, represents a class in $H^*(\overline{\mathcal{M}}_{g,n},\bq)$ for any $(g,n)$.  If we write the subscript $m=m(g,n)$ as a function of $g$ and $n$, we have fixed $g$ and $n$.  In particular, $J_{2g-2+n},K_{2g-2+n}\in H^*(\overline{\mathcal{M}}_{g,n},\bq)$.
\begin{conjecture}  \label{conjhodge}
$$J_{2g-2+n}=\left\{\begin{array}{cc}(-1)^{g}\lambda_{g-2}\lambda_g\in H^*(\overline{\mathcal{M}}_{g},\bq)&n=0\\(-1)^{g-1}\lambda_{g-1}\lambda_g\in H^*(\overline{\mathcal{M}}_{g,1},\bq)&n=1\end{array}\right.
$$
\end{conjecture}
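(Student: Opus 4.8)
The plan is to prove the identity by induction on $d=2g-2+n$, matching the two sides on the boundary through their behaviour under the gluing maps, and then removing the remaining interior ambiguity by an integral evaluation. The starting observation is that both sides transform multiplicatively under separating degenerations. For a separating gluing map $\xi:\overline{\mathcal{M}}_{g_1,n_1+1}\times\overline{\mathcal{M}}_{g_2,n_2+1}\to\overline{\mathcal{M}}_{g,n}$ the $\kappa$ classes are additive, $\xi^*\kappa_a=\kappa_a\boxtimes 1+1\boxtimes\kappa_a$, so $\bj=\exp(\sum_i\sigma_i\kappa_i)$ is multiplicative, $\xi^*\bj=\bj\boxtimes\bj$. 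On the Hodge side the Hodge bundle splits, $\be_g=\be_{g_1}\oplus\be_{g_2}$, so $c(\be_g)$ is multiplicative and $\lambda_g=\lambda_{g_1}\boxtimes\lambda_{g_2}$. Under the non-separating gluing $\overline{\mathcal{M}}_{g-1,n+2}\to\overline{\mathcal{M}}_{g,n}$ the Hodge bundle acquires a trivial summand, hence $\lambda_g$, and therefore both $\lambda_{g-2}\lambda_g$ and $\lambda_{g-1}\lambda_g$, restricts to $0$. I would first check that the degree-$d$ part of $\bj$ restricts compatibly under this map --- this compatibility is precisely what propagates the vanishing in Conjecture~\ref{conj}.

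With these restriction rules in hand, the induction runs as follows. Assuming Conjecture~\ref{conjhodge} together with the vanishing of $J_m$ for all $(g',n')$ with $2g'-2+n'<d$, multiplicativity shows that the difference $D:=J_{2g-2+n}-(\pm)\,\lambda_{g-2}\lambda_g$ (respectively $J_{2g-2+n}-(\pm)\,\lambda_{g-1}\lambda_g$) pulls back to zero under every boundary gluing map. Thus $D$ is a tautological class of the correct degree whose restriction to every boundary divisor vanishes.

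The heart of the argument is to conclude $D=0$ from this. A class restricting trivially to the boundary is detected by its pairings against interior monomials in $\psi$ and $\kappa$ classes, so it suffices to match these pairings on the two sides. Here I would invoke the $\bj$-analogue of Theorem~\ref{numeric} reduced in Section~\ref{Jweak}, which constrains $\int_{\overline{\mathcal{M}}_{g,n}}\bj\cdot P(\psi_1,\dots,\psi_n)$, together with the classical Hodge-integral evaluations for $\lambda_g\lambda_{g-1}$ and the Faber intersection-number formula, to show that $J_{2g-2+n}$ and the Hodge class have identical pairings against a spanning family. Provided the relevant tautological group (the complementary-degree piece modulo boundary) is one-dimensional, equality of all such numbers forces $D=0$.

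The base cases are classical: for $g=1,n=1$ the claim is $J_1=\kappa_1=\lambda_1$ on $\overline{\mathcal{M}}_{1,1}$, and for $g=2,n=0$ it is $J_2=\tfrac12(\kappa_1^2-3\kappa_2)=\lambda_2$ on $\overline{\mathcal{M}}_2$. The main obstacle is the last step: upgrading numerical equality of intersection numbers to an equality of cohomology classes, which in general requires the (conjectural) Gorenstein property of the tautological ring in the relevant degree. To sidestep this I would, as an alternative, realise $\bj$ as a genuine pushforward of $\psi$ classes from $\overline{\mathcal{M}}_{g,n+k}$ via the Kaufmann--Manin--Zagier / Arbarello--Cornalba formula expressing $\exp(\sum_i\sigma_i\kappa_i)$ through forgetful pushforwards, and compare the result directly with Mumford's Grothendieck--Riemann--Roch expression for $c(\be_g)$; this would reduce the identity to a single $(g,n)$-independent power-series identity between the generating series $\sum_k(-1)^kk!\,t^k$ and the Bernoulli-number coefficients appearing in $\mathrm{ch}(\be_g)$, bypassing the Gorenstein issue entirely.
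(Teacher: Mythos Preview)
First, note that this statement is a \emph{conjecture} in the paper, not a theorem; the paper gives no proof, only a computer verification up to $g=4$ via admcycles and the numerical evidence of Section~\ref{Jweak}. So there is no paper proof to compare against, and the question is whether your outline would actually close.

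Your induction has a genuine gap at the non-separating boundary. Under $\phi_{\text{irr}}:\overline{\modm}_{g-1,n+2}\to\overline{\modm}_{g,n}$ one has $\phi_{\text{irr}}^*\kappa_m=\kappa_m$, hence $\phi_{\text{irr}}^*J_{2g-2+n}=J_{2g-2+n}\in H^*(\overline{\modm}_{g-1,n+2})$. But $2(g-1)-2+(n+2)=2g-2+n$: the invariant $d$ is \emph{preserved} by $\phi_{\text{irr}}$, so your inductive hypothesis on smaller $d$ does not apply. To conclude that this pullback vanishes you need $J_d=0$ on $\overline{\modm}_{g-1,n+2}$ (where $n+2>1$), and the only known mechanism for that---Proposition~\ref{JHodge}---feeds precisely on the $n=0,1$ cases of Conjecture~\ref{conjhodge} at the \emph{same} $d$, via the gluing $\overline{\modm}_{g-1,n+2}\to\overline{\modm}_{g,n}$. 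So this step is circular; you have not shown that $D$ restricts to zero on the irreducible boundary divisor.

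Even if you did, two further obstacles remain. The numerical input you invoke---the $\bj$-analogue of Theorem~\ref{numeric}---is not proven in the paper; Section~\ref{Jweak} reduces it to regularity in $\epsilon$ of the correlators, which is only \emph{expected}. And the passage from ``vanishes on the boundary and has the right pairings'' to $D=0$ needs a Gorenstein-type statement that is not available in general. Your alternative route through Mumford's GRR does not collapse to a $(g,n)$-independent power-series identity: Mumford's formula for $\mathrm{ch}(\be_g)$ contains explicit boundary contributions, so any expression for $\lambda_{g-1}\lambda_g$ or $\lambda_{g-2}\lambda_g$ as a polynomial in $\kappa$ classes carries $(g,n)$-dependent boundary corrections. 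Showing that those corrections match the boundary behaviour of $J_{2g-2+n}$ is exactly the content of the conjecture, not a bypass of it.
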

Conjecture~\ref{conjhodge} has been checked up to $g=4$ using the Sage package admcycles \cite{DSZadm} which is based on Pixton's relations \cite{PPZRel,PixTau}.
\begin{conjecture}  \label{eigenfunction}
For $g>1$
$$K_{2g-2+n}=\left(\prod_{i=1}^n\psi_i\right)\pi^*K_{2g-2}$$
and $K_{2g-2}$ is the unique degree $2g-2$ polynomial up to scale with the property that all of its pullbacks times the product of $\psi$ classes are polynomials in $\kappa$ classes.
\end{conjecture}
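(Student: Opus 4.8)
The statement has two parts---an explicit factorisation of $K_{2g-2+n}$ and a uniqueness characterisation of $K_{2g-2}$---and I would treat them separately.

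For the factorisation the central tool is the behaviour of $\bk$ under the forgetful map $\pi:\overline{\modm}_{g,n+1}\to\overline{\modm}_{g,n}$. The comparison $\pi^*\kappa_m=\kappa_m-\psi_{n+1}^m$ exponentiates to
\[
\bk_{(n+1)}=\pi^*\bk_{(n)}\cdot A(\psi_{n+1}),\qquad A(t)=\exp\Big(\sum_i s_it^i\Big)=\Big(\sum_{k\ge0}(-1)^k(2k+1)!!\,t^k\Big)^{-1}=1+3t-6t^2+\cdots,
\]
where $\bk_{(n)}$ is $\bk$ on $\overline{\modm}_{g,n}$. Writing $A(t)=\sum_{b\ge0}c_bt^b$ and extracting the degree $2g-1+n$ part gives $K_{2g-1+n}^{(n+1)}=\sum_{b\ge0}c_b\,\psi_{n+1}^{\,b}\,\pi^*K_{2g-1+n-b}^{(n)}$. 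I would then induct on $n$: the $b=0$ term is killed by the vanishing of Conjecture~\ref{conj}, and the $b=1$ term, using the inductive hypothesis together with the standard fact $\psi_{n+1}|_{D_{i,n+1}}=0$ (whence $\psi_{n+1}\,\pi^*\psi_i=\psi_{n+1}\psi_i$), equals $c_1\,\psi_1\cdots\psi_{n+1}\,\pi^*K_{2g-2}$---the expected product but carrying the wrong constant $c_1=3$.

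The hard part is the tail $\sum_{b\ge2}c_b\,\psi_{n+1}^{\,b}\,\pi^*K_{2g-1+n-b}^{(n)}$: it involves the lower classes $K_{2g-1+n-b}^{(n)}$, which sit below the vanishing range and admit no factorisation, so these terms must simultaneously cancel all non-product contributions and correct the constant from $3$ to $1$. This is where I expect the real difficulty to lie, and I see two routes around it. The geometric route is to avoid the term-by-term expansion altogether and instead show directly that $K_{2g-2+n}$ restricts to zero on each boundary divisor $D_{i,n+1}$ and that what remains descends along $\pi$ after division by $\prod_i\psi_i$, exploiting the same restriction properties of $\bk$ used for Propositions~\ref{JHodge} and \ref{Kpsi}; this turns the problem into an induction on $\dim\overline{\modm}_{g,n}$ whose only new input is the single class $K_{2g-2}$ on $\overline{\modm}_g$. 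The route I find most promising is to prove that $K_{2g-2+n}$ is a multiple of Norbury's class $\Theta_{g,n}\in H^{2(2g-2+n)}(\overline{\modm}_{g,n})$, since the defining relation $\Theta_{g,n+1}=\psi_{n+1}\,\pi^*\Theta_{g,n}$ gives the factorisation at once; the double factorials $(2k+1)!!$ defining the $s_i$ are exactly the Brezin--Gross--Witten data computing the pairings of $\Theta_{g,n}$ against $\psi$-monomials, so the plan is to match $\int K_{2g-2+n}\cdot P(\psi_1,\dots,\psi_n)$ with $\int\Theta_{g,n}\cdot P$ for all monomials $P$, Theorem~\ref{numeric} supplying the vanishing at the top of the range. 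The obstacle then becomes upgrading equality of all such intersection numbers to equality of classes, which needs either confinement to the tautological ring or an independent nondegeneracy.

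For the uniqueness statement I would argue by linear algebra on the finite-dimensional space of degree $2g-2$ polynomials $Q$ in $\kappa$ classes. Writing $\pi_n:\overline{\modm}_{g,n}\to\overline{\modm}_g$ for the map forgetting all points and $p_m=\sum_{i=1}^n\psi_i^m$, repeated use of $\psi_{n+1}|_{D_{i,n+1}}=0$ reduces $\prod_i\psi_i\cdot\pi_n^*\kappa_m$ to $\prod_i\psi_i\cdot(\kappa_m-p_m)$, and hence $\prod_i\psi_i\cdot\pi_n^*Q$ to $\prod_i\psi_i\cdot Q(\kappa_\bullet-p_\bullet)$. The requirement that this lie in $\bq[\kappa]$ for every $n$ is then a system of linear conditions on $Q$, governed by the relations that express $\prod_i\psi_i$ times a symmetric function of the $\psi_i$ through $\kappa$ classes (themselves consequences of $\pi_*\psi^{m+1}=\kappa_m$). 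The plan is to show that imposing these conditions over all $n$ cuts the solution space down to the line spanned by $K_{2g-2}$, with the series $\sum_k(-1)^k(2k+1)!!t^k$---equivalently the Riccati equation $2t^2A'=(3t+1)A-A^2$ satisfied by $A$---forced as the unique normalised solution. As with the factorisation, the crux is knowing precisely which mixed $\psi$--$\kappa$ expressions are expressible through $\kappa$ classes alone, i.e.\ the tautological relations on $\overline{\modm}_{g,n}$, and this is the step I expect to be genuinely hard.
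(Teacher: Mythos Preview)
The statement you are attempting to prove is Conjecture~\ref{eigenfunction} in the paper, and the paper does \emph{not} contain a proof of it: it is presented as genuinely open. The only unconditional case established is $g=1$, where $K_n$ lies in top degree on $\overline{\modm}_{1,n}$ and the factorisation reduces to a single numerical identity obtainable from the push-forward relation $\pi_*K_{m+1}=(3n-2m)K_m$. For $g>1$ the paper merely records the implication chain Conjecture~\ref{KTheta} $\Rightarrow$ Conjecture~\ref{eigenfunction} $\Rightarrow$ \eqref{Krel}, together with the compatibility check that $\pi_*K_{2g-1+n}=(2g-2+n)K_{2g-2+n}$.

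Your inductive approach to the factorisation has a circularity: you invoke the vanishing of Conjecture~\ref{conj} to kill the $b=0$ term $\pi^*K_{2g-1+n}^{(n)}$, but in the paper's logic the $K$-vanishing is itself a \emph{consequence} of Conjecture~\ref{eigenfunction} (Proposition~\ref{Kpsi}), not an independent input. More seriously, you correctly identify that the tail $\sum_{b\ge2}c_b\,\psi_{n+1}^b\,\pi^*K_{2g-1+n-b}^{(n)}$ is uncontrolled---it involves the sub-top classes $K_m^{(n)}$ with $m<2g-2+n$, about which nothing is known---and your honest acknowledgement that this is ``where the real difficulty lies'' is accurate: there is no mechanism in the paper, or in your proposal, that makes these terms cancel so as to correct the leading constant from $3$ to $1$.

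Your $\Theta$-route is exactly what the paper proposes via Conjecture~\ref{KTheta}: since $\Theta_{g,n+1}=\psi_{n+1}\pi^*\Theta_{g,n}$ is proven in \cite{NorNew}, the equality $K_{2g-2+n}=\Theta_{g,n}$ would give the factorisation immediately. But Conjecture~\ref{KTheta} is also open, and the obstacle you name---upgrading equality of intersection numbers (Theorem~\ref{limBGW}, Proposition~\ref{bgwtheta}) to equality of cohomology classes---is precisely the gap; the nondegeneracy of the tautological pairing needed to close it is itself conjectural. The uniqueness half of Conjecture~\ref{eigenfunction} is not addressed in the paper beyond the restatement following the conjecture, so your linear-algebra sketch, while a reasonable plan, is aimed at territory the paper does not enter.
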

Conjecture~\ref{eigenfunction} is compatible with the push-forward property
\[\pi_*K_{2g-2+n+1}=(2g-2+n)K_{2g-2+n}\]
proven in Section~\ref{sec:prop}.  Moreover, an easy consequence of the push-forward property is the following proven $g=1$ version of Conjecture~\ref{eigenfunction}.  For $K_1\in H^*(\overline{\modm}_{1,1},\bq)$,
\[K_{n}=\left(\prod_{i=2}^n\psi_i\right)\pi^*K_{1}\in H^*(\overline{\modm}_{1,n},\bq).
\]
The uniqueness part of Conjecture~\ref{eigenfunction} can be restated as follows.  For $g>1$, a homogeneous degree $2g-2$ polynomial $P(\kappa_1,...,\kappa_{2g-2})$ satisfies the property that $\left(\prod_{i=1}^n\psi_i\right)\pi^*P\in H^*(\overline{\modm}_{g,n},\bq)$ is a polynomial in $\kappa$ classes for each $n>0$ iff $P(\kappa_1,...,\kappa_{2g-2})$ is a scalar multiple of $K_{2g-2}$.

Conjectures~\ref{conjhodge} and \ref{eigenfunction} imply Conjecture~\ref{conj} via vanishing of well chosen restriction maps---see Section~\ref{sec:geom}, Propositions~\ref{JHodge} and~\ref{Kpsi}.  Thus, expressions for $\lambda_{g-2}\lambda_g$ and $\lambda_{g-1}\lambda_g$ in terms of known polynomials in the $\kappa$ classes, such as
\[ \lambda_2\lambda_3= J_5=\tfrac{1}{120}\kappa_1^5-\tfrac14\kappa_1^3\kappa_2+\tfrac98\kappa_1\kappa_2^2+\tfrac{13}{6}\kappa_1^2\kappa_3-\tfrac{13}{2}\kappa_2\kappa_3-\tfrac{71}{4}\kappa_1\kappa_4+\tfrac{461}{5}\kappa_5 \in H^*(\overline{\modm}_{3},\bq)\]
are enough to prove the vanishing $J$-polynomial relations among $\kappa$ classes.  Similarly, properties of the action of the operator $\psi_{n+1}\pi^*$ on the ring of $\kappa$ polynomials would be enough to prove the vanishing $K$-polynomial relations among $\kappa$ classes.

In \cite{NorNew} a collection of cohomology classes $\Theta_{g,n}\in H^{2(2g-2+n)}(\overline{\modm}_{g,n},\bq)$, conjecturally related to the supermoduli space of curves $\widehat{\modm}_{g,n}$, \cite{NorEnu,SWiJTG}, are defined.  We recall the construction in Section~\ref{spincohft}.
\begin{conjecture}  \label{KTheta}
For $2g-2+n>0$,
$$K_{2g-2+n}=\Theta_{g,n}.$$
\end{conjecture}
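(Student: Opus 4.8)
The plan is to characterise $\Theta_{g,n}$ by the structural properties established in \cite{NorNew}---that $\{\Theta_{g,n}\}$ forms a cohomological field theory satisfying the pullback relation $\Theta_{g,n+1}=\psi_{n+1}\,\pi^*\Theta_{g,n}$, with boundary gluing governed by the Brezin--Gross--Witten TQFT, and normalised by $\Theta_{1,1}=3\psi_1$---and to verify that $K_{2g-2+n}$ satisfies the same properties, running an induction on the quantity $2g-2+n$. The base cases are immediate: for $g=0$ both classes vanish since their degree $n-2$ exceeds $\dim\overline{\modm}_{0,n}=n-3$, while on $\overline{\modm}_{1,1}$ one has $K_1=3\kappa_1=3\psi_1=\Theta_{1,1}$, using $\kappa_1=\psi_1$ in $H^2(\overline{\modm}_{1,1},\bq)$.

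First I would carry out the numerical comparison, extending Theorem~\ref{numeric} from the vanishing range $m<g-1$ to the boundary value $m=g-1$. For a monomial $P(\psi_1,\dots,\psi_n)$ of degree $g-1$ only the top piece of $\bk$ contributes for dimension reasons, so $\int_{\overline{\modm}_{g,n}}K_{2g-2+n}\cdot P=\int_{\overline{\modm}_{g,n}}\bk\cdot P$, and the task becomes showing this equals the corresponding Brezin--Gross--Witten correlator $\int_{\overline{\modm}_{g,n}}\Theta_{g,n}\cdot P$. This reduces to a purely generating-function identity: one must check that the coefficients $s_i$ determined by $\exp(-\sum s_i t^i)=\sum_k(-1)^k(2k+1)!!\,t^k$ reproduce the string and dilaton recursion of the Brezin--Gross--Witten tau function. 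The pushforward relation $\pi_*K_{2g-2+n+1}=(2g-2+n)K_{2g-2+n}$ proven in Section~\ref{sec:prop} supplies exactly the dilaton step and anchors this recursion.

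Next I would promote equality of $\psi$-pairings to equality of classes. Since $\kappa$ classes restrict additively under the boundary gluing maps, $\xi^*\kappa_a=\kappa_a\boxtimes 1+1\boxtimes\kappa_a$ across a separating node and $\xi^*\kappa_a=\kappa_a$ across a nonseparating one, the generating class $\bk$ restricts multiplicatively; taking the degree $2g-2+n$ part gives $\xi^*K_{2g-2+n}=\sum_{a+b=2g-2+n}K_a\boxtimes K_b$ in the separating case. Every glued component carries the node as an extra marked point, so the vanishing half of Conjecture~\ref{conj} applies on each factor (its exceptional locus $n=0$ never occurs), killing all terms except the top one $a=2g_1-1+n_1$, $b=2g_2-1+n_2$; this surviving term is precisely the gluing of $\Theta$ dictated by its TQFT, and similarly for the nonseparating divisor. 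Combined with the pullback relation for $K$---the content of Conjecture~\ref{eigenfunction}, which this argument must establish in tandem---one concludes that $K_{2g-2+n}$ and $\Theta_{g,n}$ have identical restrictions to all boundary divisors and identical pairings with $\psi$-monomials, so their difference is a class with trivial boundary restriction and trivial $\psi$-pairings, which the induction on dimension forces to vanish.

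The main obstacle is this final promotion step. Matching $\psi$-intersection numbers alone never determines a tautological class, so one genuinely needs the boundary and $\kappa$ pairings, and these are controlled only through the vanishing statements of Conjecture~\ref{conj} on smaller strata. This couples the proof of Conjecture~\ref{KTheta} to that of Conjecture~\ref{conj} and of the eigenfunction property in a single interleaved induction. Moreover, the $\Theta$-cohomological field theory is not semisimple, so Teleman's classification is unavailable and the required uniqueness must be extracted directly from the dilaton relation together with the gluing axioms, which is the delicate point to get right.
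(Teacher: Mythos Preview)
The statement you are trying to prove is a \emph{conjecture} in the paper, not a theorem: the paper offers no proof of $K_{2g-2+n}=\Theta_{g,n}$ and leaves it open. What the paper does establish is the numerical shadow of this identity---Theorem~\ref{limBGW} shows that the $\psi$-pairings of $K_{2g-2+n}$ assemble into the Br\'ezin--Gross--Witten tau function, and Proposition~\ref{bgwtheta} verifies $Z^{\text{BGW}}=Z^\Theta$ up to genus~7 by computer check of the weak pullback identity~\eqref{pullbweak}. Everything beyond that is explicitly conjectural.

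Your proposal has a genuine circularity problem. The boundary-restriction step invokes ``the vanishing half of Conjecture~\ref{conj}'' to kill the off-diagonal terms in $\xi^*K_{2g-2+n}=\sum K_a\boxtimes K_b$, and the pullback step invokes Conjecture~\ref{eigenfunction}. But the paper's logical flow runs in the opposite direction: Conjecture~\ref{KTheta} $\Rightarrow$ Conjecture~\ref{eigenfunction} $\Rightarrow$ \eqref{Krel} of Conjecture~\ref{conj}. So you are assuming consequences of the statement you want to prove. You acknowledge this (``couples the proof\ldots in a single interleaved induction''), but an interleaved induction only works if at each stage you can establish the needed vanishing $K_m=0$ for $m>2g'-2+n'$ on the smaller strata \emph{independently}---and that is exactly what is not known. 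The paper's Proposition~\ref{equivconj} makes this precise: vanishing of $K_m$ in the conjectured range is \emph{equivalent} to $\{K_{2g-2+n}\}$ being a CohFT without unit, so your boundary argument presupposes the full strength of what you are after.

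The final obstacle you name---promoting equality of $\psi$-pairings to equality of classes without semisimplicity---is indeed the heart of the matter, and neither you nor the paper resolves it. Your sketch does not constitute a proof.
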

It is proven in \cite{NorNew} that for $n\geq 0$, $\Theta_{g,n+1}=\psi_{n+1}\pi^*\Theta_{g,n}$.  Hence Conjecture~\ref{KTheta} implies Conjecture~\ref{eigenfunction} and thus also the vanishing $K_m(\kappa_1,...,\kappa_m)=0$ for $m>2g-2+n$ except $(m,n)=(3g-3,0)$.  A summary of the implications is as follows:\\

\vspace{-.2cm}
\begin{center}
Conjecture~\ref{conjhodge} + Conjecture~\ref{eigenfunction} $\ \Rightarrow\ $ Conjecture~\ref{conj}

\vspace{.3cm}
Conjecture~\ref{KTheta} $\Rightarrow$ Conjecture~\ref{eigenfunction} $\Rightarrow$ \eqref{Krel} of Conjecture~\ref{conj}.
\end{center}
\vspace{.1cm}

We prove Theorem~\ref{numeric} in Section~\ref{sec:van} via regularity of Virasoro equations, or equivalently topological recursion, depending on a parameter.  We also show that a weak version of Conjecture~\ref{conj}, which considers vanishing intersection numbers, can be reduced to verifying the regularity of generating functions depending on a parameter and produced via  topological recursion,  following the proof of Theorem~\ref{numeric}.  We relate the classes $\bk$ to the Br\'ezin-Gross-Witten tau function and conjecturally to the classes $\Theta_{g,n}$ in Sections~\ref{sec:relbgw} and \ref{sec:geom}.

\noindent {\em Acknowledgements.}   PN would like to thank Rahul Pandharipande, Aaron Pixton, Johannes Schmitt and Mehdi Tavakol for useful conversations.  MK is supported by the National Research University Higher School of Economics.    PN was supported under the  Australian
Research Council {\sl Discovery Projects} funding scheme project number DP180103891.

\section{Vanishing of intersection numbers} \label{sec:van}
In this section we consider intersection numbers $\int_{\overline{\modm}_{g,n}}\bk\cdot\omega$ and $\int_{\overline{\modm}_{g,n}}\bj\cdot\omega$ for any tautological class $\omega\in RH^*(\overline{\modm}_{g,n})$.  We prove Theorem~\ref{numeric} regarding vanishing of intersection numbers of $\bk$ with polynomials in $\psi$ classes via regularity of a related potential.  Using this same idea, we prove that the more general vanishing of intersection numbers with $\omega\in RH^m(\overline{\mathcal{M}}_{g,n})$ for $m<g-1$, i.e. a weak version of Conjecture~\ref{conj}, is equivalent to a purely combinatorial problem stated in terms of regularity of correlators, without reference to the moduli space of curves.

\subsection{Topological recursion for the $\kappa$-potential}\label{sectcf}

Consider two sets of independent variables $s=(s_1,s_2,\dots)$ and $t=(t_0,t_1,\dots)$. Set
$$\ck=\exp\Bigl(\sum_{i=1}^\infty s_i\kappa_i\Bigr)$$
and introduce the function
$$
\cf(\hbar,t,s)=\sum_{g,n}\frac{\hbar^g}{n!}\sum_{k_1,\dots,k_n}\int_{\overline{\modm}_{g,n}}
\ck\;\psi_1^{k_1}\dots\psi_n^{k_n}\;t_{k_1}\dots t_{k_n}
$$
where the sum is over all $g,n\in\bn$ satisfying $2g-2+n>0$ and $(k_1,\dots,k_n)\in\bn^n$.
This is the generating function for intersection numbers of all possible monomials in $\kappa$ classes and $\psi$ classes. If we set $s_i=0$ for all $i$, this specializes to the Kontsevich-Witten potential enumerating intersection numbers of $\psi$-monomials,
$$\cf(\hbar,t,0)=F^{KW}(\hbar,t).$$

Define polynomials $h_i=h_i(s)$ by the formal expansion
\begin{equation}\label{hschange}
1+h_1t+h_2t^2+\dots=\exp(-\sum s_it^i).
\end{equation}

\begin{theorem}[\cite{MZoInv}]\label{KWshift}
The full potential $\cf(\hbar,t,s)$ is obtained from the Kontsevich-Witten potential $F^{KW}(\hbar,t)$ by a suitable shift of variables
$$\cf(\hbar,t,s)=F^{KW}(\hbar,t_0,t_1,t_2-h_1(s),t_3-h_2(s),\dots).$$
\end{theorem}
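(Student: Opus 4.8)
The plan is to prove this via the well-known interaction between $\kappa$ classes and $\psi$ classes under the forgetful map, due to Arbarello–Cornalba. The key input is the pushforward formula expressing $\kappa$ classes on $\overline{\modm}_{g,n}$ in terms of $\psi$ classes on $\overline{\modm}_{g,n+1}$: for any monomial, $\pi_*(\psi_{n+1}^{a+1})=\kappa_a$, and more generally the class $\ck=\exp(\sum_i s_i\kappa_i)$ has a clean transformation law when one adds a marked point. Concretely, I would use the identity $\pi^*\kappa_i=\kappa_i-\psi_{n+1}^i$ on $\overline{\modm}_{g,n+1}$, together with the fact that integrating against a new point amounts to a pushforward along $\pi$.

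First I would translate the generating-function statement into a statement about adding one marked point at a time. The strategy is to compare the coefficient of a $\psi$-monomial times $\ck$ on $\overline{\modm}_{g,n}$ with the corresponding Kontsevich–Witten coefficient after the variable shift $t_j\mapsto t_j-h_{j-1}(s)$ for $j\geq 2$. Because $\cf(\hbar,t,s)$ packages all insertions $t_{k}\,\psi^k$ symmetrically, the natural approach is to show that inserting a factor $\exp(\sum_i s_i\kappa_i)$ is equivalent, under the integral, to modifying the set of $\psi$-insertions according to \eqref{hschange}. The Arbarello–Cornalba formalism makes precise that a $\kappa$ class is a pushforward of a $\psi$ power from the universal curve, and that $\exp(\sum_i s_i\kappa_i)$ can be realized by summing over how many extra points are pushed forward, with the combinatorial weights assembling exactly into the exponential generating series $\exp(-\sum_i s_i t^i)$ whose coefficients are the $h_i(s)$.

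The second step is to carry out this bookkeeping. I would write $\ck$ on $\overline{\modm}_{g,n}$ as a sum over $\ell\geq 0$ of contributions from $\ell$ additional forgotten points, each contributing a power of $\psi$ at the new point. Summing over the number and labelling of these extra points, the symmetry factors $1/\ell!$ combine with the coefficients $s_i$ to produce precisely the shift $t_{k}\mapsto t_{k}-h_{k-1}(s)$ in the potential, matching the definition in \eqref{hschange}. The crucial algebraic check is that the generating series $\exp(-\sum_i s_i t^i)=1+\sum_{j\geq 1}h_j t^j$ governs exactly the conversion from $\kappa$-weights to corrections on the $\psi$-insertion variables $t_2,t_3,\dots$, with $t_0,t_1$ left untouched because the lowest $\kappa$-contribution shifts $t_2$.

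I expect the main obstacle to be controlling the combinatorics of the forgetful maps cleanly, in particular verifying that the contributions from adding several points factor correctly and that the dilaton/string-type boundary terms (where a new point collides with an existing one, via $\pi^*\psi_i=\psi_i-D_{i,n+1}$) either cancel or reorganize into the stated shift without spurious corrections to $t_0$ and $t_1$. Getting the exact indexing of the shift—why $t_k$ is corrected by $h_{k-1}$ rather than $h_k$—requires tracking the degree shift $\psi_{n+1}^{i}\mapsto\kappa_{i-1}$ inherent in the pushforward $\pi_*\psi_{n+1}^{a+1}=\kappa_a$. Since this is a known result cited to \cite{MZoInv}, I would ultimately appeal to that computation, but the conceptual proof above via Arbarello–Cornalba explains why the shift takes exactly this form.
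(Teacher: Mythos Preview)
Your plan is sound and follows the direct combinatorial route, close to the original argument in the cited reference: realise each $\kappa$-monomial as a pushforward of $\psi$-powers from moduli spaces with extra marked points, sum over how many extra points are introduced, and recognise the result as $F^{KW}$ evaluated at shifted times. The paper's proof takes a different and shorter path. From the same Arbarello--Cornalba relations $\kappa_m=\pi_*\psi_{n+1}^{m+1}$ and $\psi_{n+1}^a\pi^*\kappa_m=\psi_{n+1}^a(\kappa_m-\psi_{n+1}^m)$ it extracts, at the level of the generating function, the first-order linear PDEs
\[
\Bigl(\frac{\partial}{\partial s_m}-\sum_{i\ge 0} h_{i-m-1}(s)\,\frac{\partial}{\partial t_i}\Bigr)\cf=0,\qquad m\ge1,
\]
which say that $\cf$ is constant along the leaves of these pairwise commuting vector fields. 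The leaf through $(t,s)$ meets the subspace $\{s=0\}$ exactly at $(t_0,t_1,t_2-h_1,t_3-h_2,\dots)$, and the shift formula follows at once. This transport-equation argument bypasses the multi-point combinatorics and the $\pi^*\psi_i=\psi_i-D_{i,n+1}$ corrections you were worried about (they never appear, since one differentiates in $s_m$ rather than literally adjoining marked points), and it makes the index alignment $t_k\leftrightarrow h_{k-1}$ immediate from the shape of the vector field. Your approach would work but requires the full set-partition bookkeeping to be carried out carefully; the paper trades that for a one-line integration of commuting flows.
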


\begin{remark}
The Kontsevich-Witten potential is a power series in the variables $t_0,t_1,\hbar$ whose coefficients are polynomials in the remaining variables. Therefore, the Taylor expansion of $F^{KW}$ at the point $t=(0,0,-h_1,-h_2,\dots)$ is well defined.
\end{remark}

\begin{proof} 
There is a standard inductive procedure reducing computation of intersection numbers involving $\kappa$ classes to those involving just $\psi$ classes. This procedure is based on the following relations describing the behaviour of $\kappa$ classes under the forgetful map $\pi:\overline{\modm}_{g,n+1}\to\overline{\modm}_{g,n}$:
$$\kappa_m=\pi_*(\psi_{n+1}^{m+1}),\qquad \psi_{n+1}^a\pi^*(\kappa_m)=\psi_{n+1}^a(\kappa_m-\psi_{n+1}^m),~a>0.$$
These relations imply relations between intersection numbers which are equivalent to the following linear partial differential equations:
\begin{equation}\label{sdependence}
V_m\cf=0,\quad m=1,2,\dots,\qquad V_m=\pd{}{s_m}-\sum_{i=0}^\infty h_{i-m-1}(s)\pd{}{t_i}.
\end{equation}
These equations mean that the function $\cf$ is constant along the phase curves of the pairwise commuting vector fields $V_m$ and thus along the whole distribution generated by these fields. The integral subvariety of this distribution passing through the point $(t,s)$ intersects the subspace $s=0$ at the point $(q,0)$ where $q=(t_0,t_1,t_2-h_1(s),t_3-h_2(s),\dots)$. This implies the equality of the theorem.
\end{proof}
\begin{remark}
Equation \eqref{hschange} provides an invertible change of $h$ and $s$ coordinates. From now on, it will be more convenient for us to apply this change and to use the coordinates $h_1,h_2,\dots$ instead of $s_1,s_2,\dots$ in all considerations. So, for example, by $\ck(h)$ we mean the class
$$\ck(h)=\exp\bigl((-h_1)\,\kappa_1+(\frac12h_1^2-h_2)\,\kappa_2+(-\frac13h_1^3+h_1h_2-h_3)\,\kappa_3+\dots\bigr)$$
which is the exponential $e^{\sum s_i\kappa_i}$ expressed in terms of $h$ coordinates, and similarly for the $\kappa$-potential $\cf$. In $h$-coordinates, the equalities~\eqref{sdependence} take a simpler form
$$\pd{\cf}{h_m}+\pd{\cf}{t_{m+1}}=0,\quad m=1,2,\dots.$$
\end{remark}

\bigskip
\begin{corollary}\label{Vir}
The potential $\cf$ obeys the following Virasoro constraints.  For any $m\ge-1$ we have
\begin{multline*}
\frac12\sum_{i+j=m-1}(2i+1)!!(2j+1)!!
\left(\hbar\pd{^2\cf}{t_i\partial t_j}+\pd{\cf}{t_i}\pd{\cf}{t_j}\right)+\\
\sum_{k-i=m}(t_i-h_{i-1})\frac{(2k+1)!!}{(2i-1)!!}\pd{\cf}{t_k}+
\delta_{m,-1}\frac{t_0^2}{2}+\delta_{m,0}\frac{\hbar}{8}=0.
\end{multline*}
where we set $h_0=1$, $h_{-1}=0$.
\end{corollary}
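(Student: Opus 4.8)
The plan is to obtain these constraints by transporting the classical Virasoro constraints for the Kontsevich--Witten potential $F^{KW}$ through the change of variables of Theorem~\ref{KWshift}. The Witten--Kontsevich theorem asserts that $\exp(F^{KW}/\hbar)$ is annihilated by Virasoro operators $L_m$, $m\ge-1$; written out for $F^{KW}$ itself, in the normalization of $\psi$-intersection numbers used here and with the usual dilaton shift $t_1\mapsto t_1-1$, these take the form
\begin{multline*}
\tfrac12\sum_{i+j=m-1}(2i+1)!!(2j+1)!!\Bigl(\hbar\pd{^2F^{KW}}{t_i\partial t_j}+\pd{F^{KW}}{t_i}\pd{F^{KW}}{t_j}\Bigr)\\
+\sum_{k-i=m}(t_i-\delta_{i,1})\frac{(2k+1)!!}{(2i-1)!!}\pd{F^{KW}}{t_k}+\delta_{m,-1}\frac{t_0^2}{2}+\delta_{m,0}\frac{\hbar}{8}=0.
\end{multline*}
I would record precisely this identity first, with a reference, noting that the $\hbar$ on the second-derivative term is exactly the genus grading of $\cf$: handle-gluing of a single component raises the genus by one and carries the extra $\hbar$, whereas the product term splits the genus additively between two components.

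Next I would apply the substitution $t_i\mapsto t_i-h_{i-1}(s)$ for $i\ge2$ (with $t_0,t_1$ unshifted), under which $\cf(\hbar,t,s)=F^{KW}(\hbar,t_0,t_1,t_2-h_1,\dots)$ by Theorem~\ref{KWshift}. The crucial elementary point is that the shift amounts $h_{i-1}(s)$ do not involve the variables $t$; hence $\partial/\partial t_i$ applied to $\cf$ equals $\partial/\partial t_i$ applied to $F^{KW}$ at the shifted argument, and likewise for second derivatives, so each Virasoro operator transforms into the identical differential operator in the $t_i$ acting on $\cf$. The only genuine change is in the linear term, through the substituted argument: the coefficient $(t_i-\delta_{i,1})$ of the untransformed constraint becomes $t_i-h_{i-1}(s)-\delta_{i,1}$, and with the bookkeeping convention $h_0=1$, $h_{-1}=0$ this is exactly $t_i-h_{i-1}$, the dilaton shift being absorbed into the $i=1$ value $h_0=1$ while the genuine $\kappa$-shift supplies $h_{i-1}(s)$ for $i\ge2$. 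The anomaly terms $\delta_{m,-1}t_0^2/2$ and $\delta_{m,0}\hbar/8$ involve only $t_0$ or constants, neither of which is touched by the substitution, so they survive unchanged, and the stated identity follows.

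The argument is almost entirely bookkeeping, so the one place requiring genuine care, and thus the main obstacle, is fixing the overall normalization of the Virasoro operators for $F^{KW}$ so that the coefficients $(2i+1)!!(2j+1)!!$ and $(2k+1)!!/(2i-1)!!$ and the constants $t_0^2/2$, $\hbar/8$ emerge exactly as written rather than up to a rescaling of the $t_k$. I would pin this down by checking the $m=-1$ string and $m=0$ dilaton constraints directly against low-genus intersection numbers, which simultaneously confirms that the dilaton shift is correctly encoded by $h_0=1$ and that the two anomaly coefficients are right; the remaining constraints $m\ge1$ then follow by the identical transport, completing the proof.
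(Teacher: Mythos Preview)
Your approach is exactly the paper's: the authors' proof is the one-line remark that the constraints follow directly from the Virasoro constraints for the Kontsevich--Witten potential, and you have correctly unpacked this by transporting the $F^{KW}$ constraints through the shift of Theorem~\ref{KWshift}, observing that derivatives and the $t_0^2$, $\hbar/8$ terms are unaffected while the linear coefficient $(t_i-\delta_{i,1})$ becomes $(t_i-h_{i-1})$. One small wording issue: your intermediate expression ``$t_i-h_{i-1}(s)-\delta_{i,1}$'' is not literally right at $i=1$ (since $t_1$ is not shifted), but your subsequent clarification shows you have the cases straight and the conclusion is correct.
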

\begin{proof}
This follows directly from the Virasoro constraints for the Kontsevich-Witten potential.
\end{proof}

Here is an equivalent form of the Virasoro constraints. Define the following partial differential operators:
$$L_m=\frac\hbar2\sum_{i+j=m-1}(2i+1)!!(2j+1)!!
\pd{^2}{t_i\partial t_j}+
\sum_{k-i=m}t_i\frac{(2k+1)!!}{(2i-1)!!}\pd{}{t_k}+
\delta_{m,-1}\frac{t_0^2}{2}+\delta_{m,0}\frac{\hbar}{8}.
$$
Then we have
$$\left(L_m-\sum_{k-i=m}h_{i-1}\frac{(2k+1)!!}{(2i-1)!!}\pd{}{t_k}\right)e^{\hbar^{-1}\cf}=0,
\qquad m=-1,0,1,2,\dots$$

\bigskip
There is another equivalent way to represent the Virasoro constraints, namely, in the form of \emph{topological recursion} \cite{EOrInv}. Let us expand $\cf=\sum_{g,n}\hbar^g F_{g,n}$ where $F_{g,n}$ is the contribution of intersection numbers on a single moduli space,
$$
F_{g,n}=\frac{1}{n!}\sum_{k_1,\dots,k_n}\int_{\overline{\modm}_{g,n}}
\ck(h)\;\psi_1^{k_1}\dots\psi_n^{k_n}\;t_{k_1}\dots t_{k_n}.
$$
There is a different way to collect these intersection numbers into a generating polynomial, namely, using the so called \emph{correlator differentials}
$$w_{g,n}(z_1,\dots,z_n)=\sum_{k_1,\dots,k_n}\int_{\overline{\modm}_{g,n}}
\ck(h)\;\psi_1^{k_1}\dots\psi_n^{k_n}~\prod_{i=1}^n\frac{(2k_i+1)!!dz_i}{z^{2k_i+2}_i}.$$
Equivalently, introduce the \emph{loop insertion operators}
$$\delta_i=\sum_{k=0}\frac{(2k+1)!!dz_i}{z^{2k+2}_i}\pd{}{t_k}.$$
Then we have
$$w_{g,n}=\delta_1\dots\delta_nF_{g,n}.$$

\begin{theorem}\label{thTR}
The differentials $w_{g,n}$ satisfy the relations of topological recursion for the spectral curve
$$x=\frac12 z^2,\qquad y=z+\sum_{k=1}^\infty h_{k}\frac {z^{2k+1}}{(2k+1)!!}.$$
\end{theorem}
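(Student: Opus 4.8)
The plan is to deduce the topological recursion from the Virasoro constraints of Corollary~\ref{Vir}, using the classical case of the Airy curve as the base of a deformation argument. I would begin with the undeformed case $h=0$. There $\ck(h)=1$ and the differentials $w_{g,n}$ reduce to the Kontsevich--Witten correlators $\sum\langle\tau_{k_1}\cdots\tau_{k_n}\rangle_g\prod_i(2k_i+1)!!\,z_i^{-2k_i-2}\,dz_i$; it is classical (see \cite{EOrInv}) that these are produced by topological recursion applied to the Airy curve $x=\tfrac12 z^2$, $y=z$, with the standard unstable inputs $\omega_{0,1}=y\,dx$ and $\omega_{0,2}=dz_1dz_2/(z_1-z_2)^2$. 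The content of the theorem is then that turning on the parameters $h_k$ deforms $y=z$ precisely to $y=z+\sum_{k\ge1}h_k z^{2k+1}/(2k+1)!!$ while leaving $x$ and $\omega_{0,2}$ unchanged.

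The conceptual bridge is the dictionary between shifts of the Kontsevich--Witten times and deformations of the $y$-coordinate. Since $x=\tfrac12 z^2$ has a single simple branch point at $z=0$, the differentials with $2g-2+n>0$ have poles only along $z_i=0$, and so are recovered from the free energies by the loop insertion operators $\delta_i$ exactly as in the definition $w_{g,n}=\delta_1\cdots\delta_n F_{g,n}$. In this normalisation the coefficient of $z^{2i-1}$ in $y$ is $h_{i-1}/(2i-1)!!$, and adding such a term to the Airy curve implements the time shift $t_i\mapsto t_i-h_{i-1}$. This is exactly the shift of Theorem~\ref{KWshift} (with $h_0=1$ accounting for the dilaton shift already present in the Airy base), which identifies the $\ck(h)$-deformed correlators $w_{g,n}$ with the topological recursion correlators of the deformed curve. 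The factor $(2k+1)!!$ in the statement is chosen precisely so that this correspondence has coefficient exactly $h_k$.

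To make this rigorous I would invoke the equivalence, valid for spectral curves with a single regular branch point $x=\tfrac12 z^2$, between topological recursion and a system of Virasoro constraints on $Z=\exp(\hbar^{-1}\cf)$ whose operators are read off from the expansion of $y$; this is standard in the topological recursion literature \cite{EOrInv}. Writing these operators for $y=z+\sum_{k\ge1}h_k z^{2k+1}/(2k+1)!!$, the double-derivative coefficients $(2i+1)!!(2j+1)!!$ together with the bilinear and anomaly terms come from the Airy part $y=z$, while the deformation contributes exactly the linear terms $-h_{i-1}\frac{(2k+1)!!}{(2i-1)!!}\partial_{t_k}$ with $k=i+m$. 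These coincide term by term with the operators of Corollary~\ref{Vir}. Since that corollary establishes these constraints, and the constraints determine all $w_{g,n}$ uniquely from the unstable inputs, the identification with topological recursion follows.

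The main obstacle is the bookkeeping of the middle step: pinning down the precise dictionary between the coefficients of $y$ and the shifts of the times, including the dilaton and string normalisations carried by $\omega_{0,1}$ and $\omega_{0,2}$, and verifying that the $(2k+1)!!$ normalisation reproduces Corollary~\ref{Vir} with no spurious factors. Once the linear-in-$h$ part of the deformation is matched to the shift of Theorem~\ref{KWshift}, the quadratic and derivative parts are inherited unchanged from the Airy base case, so the verification reduces to comparing two explicit families of differential operators.
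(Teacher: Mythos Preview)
Your approach is correct but proceeds differently from the paper. You argue by invoking a general equivalence---topological recursion on a curve with a single simple branch point $x=\tfrac12 z^2$ is equivalent to a system of Virasoro constraints whose operators are determined by the Taylor coefficients of $y$---and then match those operators against Corollary~\ref{Vir}. The paper instead works in the opposite direction and gives a self-contained derivation: it sums the Virasoro relations of Corollary~\ref{Vir} against $2z^{-2m-2}\,dz^2$ to assemble a single \emph{master loop equation}
\[
\hbar\Bigl(\delta^2\cf+\tfrac{dz^2}{4z^2}\Bigr)+\bigl(\delta\cf+\dtt(z)-\eta(z)\bigr)^2=O(z^2),
\]
then extracts the homogeneous $(g,n)$ piece, applies $\delta_2\cdots\delta_n$, and rewrites the result as a residue at $z=0$, checking along the way that $w_{0,2}$ is the odd part of the Bergman kernel and that the kernel factor $1/(2\eta)$ is exactly the TR recursion kernel. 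Your route is shorter provided one imports the TR/Virasoro dictionary as a black box; the paper's route makes the mechanism explicit and avoids having to locate that dictionary in the literature with normalisations (of $\omega_{0,1}$, $\omega_{0,2}$, and the double-factorial weights) matching the conventions here---which, as you correctly flag, is the real obstacle in your plan. One further remark: your first two paragraphs, which try to read the theorem off from Theorem~\ref{KWshift} together with a time-shift/$y$-deformation correspondence, are essentially a heuristic restatement of the claim rather than an independent argument; the genuine content of your proposal lies in the Virasoro matching of the third paragraph.
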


In this statement, we treat $x(z)$ as a global function on the complex line $\bc$ with coordinate $z$ having a ramification of order $2$ at the origin, while $y(z)$ is just a formal power expansion near the point $z=0$.

\begin{corollary}
Any potential of topological recursion with the spectral curve whose $x$-function is given by $x(z)=\frac12z^2$ is a specialization of the $\kappa$-potential $\cf$ for suitable choices of the parameters $h_i$ (or $s_i$), and thus is equal to the Kontsevich-Witten potential $F^{KW}$ with a suitable shift of times $t_i$ for $i\ge2$.
\end{corollary}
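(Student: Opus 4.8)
The plan is to reduce everything to the single ramification point of $x(z)=\tfrac12 z^2$ and to exploit the fact that topological recursion sees the function $y$ through a very restricted combination. First I would recall the shape of the recursion kernel on the rational $z$-line. Since $x(z)=\tfrac12 z^2$ is invariant under the deck transformation $\sigma:z\mapsto -z$, the kernel has denominator proportional to $\bigl(y(z)-y(-z)\bigr)\,dx(z)$, while its numerator is assembled from the Bergman kernel $\omega_{0,2}=\frac{dz_1\,dz_2}{(z_1-z_2)^2}$, which on a genus-$0$ curve is determined by $x$ alone and is independent of $y$. Consequently every stable correlator $w_{g,n}$ (those with $2g-2+n>0$, the ones produced by the recursion) depends on $y$ only through $y(z)-y(-z)=2\,y_{\mathrm{odd}}(z)$, i.e.\ through the odd part of the formal expansion of $y$ at the origin. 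The even part of $y$ is therefore irrelevant, and regularity of the ramification point forces the leading coefficient of $y_{\mathrm{odd}}$ to be nonzero.

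Next I would normalise this leading coefficient. Write $y_{\mathrm{odd}}(z)=c_0 z+\sum_{k\ge 1}c_k z^{2k+1}$ with $c_0\ne 0$. Using the standard homogeneity of topological recursion under $y\mapsto \lambda y$, namely $w_{g,n}\mapsto \lambda^{\,2-2g-n}\,w_{g,n}$, I would absorb the factor $c_0$ by the reciprocal rescaling of $\hbar$ and of the times $t_k$; this rescaling is precisely the freedom encoded in the word \emph{specialization}. After it we may assume $c_0=1$. Setting $h_k:=(2k+1)!!\,c_k$ for $k\ge 1$ then gives a bijection between the coefficient sequence $(c_1,c_2,\dots)$ and the parameter sequence $(h_1,h_2,\dots)$, and puts the odd part of the spectral curve into exactly the form $y(z)=z+\sum_{k\ge1}h_k\,\frac{z^{2k+1}}{(2k+1)!!}$ of Theorem~\ref{thTR}.

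The conclusion then follows by invoking the two results already established: Theorem~\ref{thTR} identifies the topological-recursion potential of this normalised curve with the $\kappa$-potential $\cf$ for the parameters $h_k$ just determined, and Theorem~\ref{KWshift} identifies $\cf$ with the Kontsevich-Witten potential $F^{KW}$ under the time shift $t_i\mapsto t_i-h_{i-1}$ for $i\ge 2$. I expect the main obstacle to be making the normalisation step fully precise: one must check that the homogeneity factor $c_0^{\,2-2g-n}$ is genuinely reproduced by a single joint rescaling of $\hbar$ together with all the times $t_k$, so that it is a bona fide specialization rather than an extra degree of freedom, and one must justify at the level of formal power series both the ``odd part only'' reduction (discarding the even part of $y$) and the regularity condition $c_0\ne0$ needed for the recursion kernel to be well defined.
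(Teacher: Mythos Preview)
Your proposal is correct and in fact more detailed than the paper, which gives no separate proof of this corollary: it is stated as an immediate consequence of Theorem~\ref{thTR} together with Theorem~\ref{KWshift}. The two points you single out---that only the odd part of $y$ enters the recursion kernel, and that the leading coefficient $c_0$ can be absorbed by the homogeneity $w_{g,n}\mapsto c_0^{\,2-2g-n}w_{g,n}$---are exactly the observations needed to make the corollary honest, and the paper uses both implicitly elsewhere (the odd-part reduction appears in the proof of Theorem~\ref{thTR} when rewriting $w_{0,2}$, and the global rescaling of $y$ is invoked verbatim in the proof of Theorem~\ref{numeric}, where multiplying $y$ by $\epsilon^{-1}$ is said to produce the factor $\epsilon^{2g-2+n}$).

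Your residual worry about whether the $c_0$-rescaling is a ``bona fide specialization'' is largely terminological: the paper uses the word loosely, and the rescaling $\hbar\mapsto c_0^{-2}\hbar$, $t_k\mapsto c_0^{-1}t_k$ does reproduce the Euler weight $2-2g-n$ up to an overall multiplicative constant in $e^{\hbar^{-1}\cf}$, which is harmless for a potential. So there is no genuine gap; you have simply made explicit what the paper leaves to the reader.
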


%
%
\begin{proof}[Proof of Theorem~\ref{thTR}] Sum up all Virasoro equations of Corollary~\ref{Vir} multiplying the $m$th equation by $\frac{2 dz^2}{z^{2m+2}}$ where $z$ is a new variable. Then, denoting
$$\dtt(z)=\sum_{k=0}^\infty t_k\frac{z^{2k}dz}{(2k-1)!!},\quad
\eta(z)=y\,dx=\sum_{k=0}^\infty h_{k}\frac{z^{2k+2}dz}{(2k+1)!!}$$
and observing $\delta \cf=\sum_{k=0}^\infty\pd{\cf}{t_k}\frac{(2k+1)!!dz}{z^{2k+2}}$,
we collect all Virasoro equations into a single one that we call the \emph{master loop equation}
$$\hbar\Bigl(\delta^2\cf+\frac{dz^2}{4z^2}\Bigr)+(\delta\cf+\dtt(z)-\eta(z))^2=O(z^2),$$
in the Laurent expansion in $z$. The original Virasoro equations can be extracted from the above relation by selecting the coefficient of $dz^2z^{2i}$ for non-positive $i$. Equivalently, this equation can be written as
\begin{equation}\label{loop}
\delta\cf=\frac1{2\eta(z)}\left(\hbar\Bigl(\delta^2\cf+\frac{dz^2}{4z^2}\Bigr)+(\delta\cf)^2+2\,\dtt\,\cf+\dtt^2\right)+O(1)
\end{equation}

Now, select homogeneous terms of fixed degrees $(g,n)$ and apply $\delta_2\dots\delta_n$ to the obtained equation. Then the loop equation takes the following form. \emph{Redefine} in the unstable cases
    $$w_{0,1}(z_1)=0,\qquad w_{0,2}(z_1,z_2)=\delta_2\dtt(z_1)$$
and denote
$$\cw_{g,n}(z,z_N)=w_{g-1,n+1}(z,z,z_N)+
\sum_{\begin{smallmatrix}g_1+g_2=g\\I\sqcup J=N\end{smallmatrix}}w_{g_1,|I|+1}(z,z_I)w_{g_2,|J|+1}(z,z_J).$$
Here, we use the notation $N = \{2, 3, \ldots, n\}$ and $z_I = \{z_{i_1}, z_{i_2}, \ldots, z_{i_k}\}$ for $I = \{i_1, i_2, \ldots, i_k\}$. Then we obtain
\begin{equation}\label{recursion}
w_{g,n}(z,z_N)=\frac{\cw_{g,n}(z,z_N)}{2\eta(z)}+O(1)
\end{equation}
in the expansion in $z$ at $z=0$. Notice that $w_{g,n}(z,z_N)$ contains only negative powers of the coordinate $z$, therefore, the last equation determines it uniquely by induction: $w_{g,n}(z,z_N)$ as a meromorphic $1$-form in $z$ is the principal part of the pole at $z=0$ of the form on the right hand side of the equation. The obtained relation is nothing but an equivalent form of the conventional topological recursion relation. In order to observe it, we remark that the projection to the principal part of the pole can be written as the residue
$$w_{g,n}(z,z_N)=\mathop{\text{res}}_{\zeta=0}
\frac{\cw_{g,n}(\zeta,z_N)}{2\eta(\zeta)}\frac {\zeta\,dz}{z^2-\zeta^2}$$
Remark also that $w_{0,2}(z_1,z_2)$ is the odd part of the \emph{Bergman Kernel} $B(z_1,z_2)=\frac{dz_1dz_2}{(z_1-z_2)^2}$:
\begin{multline*}
w_{0,2}(z_1,z_2)=\delta_2\dtt(z_1)=\sum_{k=0}^\infty\frac{dz_2 (2 k+1)!!}{z_2^{2 k+2}} \frac{dz_1 z_1^{2 k}}{(2 k-1)!!} =\\
\frac{(z_1^2+z_2^2)dz_1dz_2}{(z_1^2-z_2^2)^2}=\frac12(B(z_1,z_2)-B(-z_1,z_2)),
\end{multline*}
and the form $\frac {\zeta\,dz}{z^2-\zeta^2}=\frac12\int_{-\zeta}^{\zeta}B(z,\cdot)$ is the odd part of its primitive.
\end{proof}

\subsection{Regularity of potentials}
We now introduce a parameter $\epsilon$ into various potentials, and prove that regularity in $\epsilon$ is equivalent to vanishing of some coefficients.  This is enough to prove Theorem~\ref{numeric} and reduces a weak version of Conjecture~\ref{conj} to a regularity property which is verified numerically in small cases.

\begin{proof}[Proof of Theorem~\ref{numeric}]
Let us make an additional rescaling in the series $\cf$ and in the corresponding correlator differentials $w_{g,n}$ by replacing the class $\ck(h)$ in their definition with $\sum_m \epsilon^{2g-2+n-m}\ck_m(h)$ where $\ck_m(h)$ is the degree~$m$ homogeneous component of $\ck(h)$. With this rescaling, the classes $\ck_m(h)$ with $m>2g-2+n$ are multiplied with negative powers of the parameter~$\epsilon$. Therefore, the fact that intersection numbers with these classes vanish is equivalent to the fact that the resulting rescaled polynomials $F_{g,n}$ and differentials $w_{g,n}$ are regular at $\epsilon=0$, that is, there exists limits $\lim\limits_{\epsilon\to 0}F_{g,n}$, or equivalently, there exist limits $\lim\limits_{\epsilon\to 0}w_{g,n}$. The rescaled correlator differentials also obey topological recursion with the following rescaled spectral curve
$$x=\frac12 z^2,\qquad y=\epsilon^{-1}\sum_{k=0}^\infty h_{k}\epsilon^{-k}\frac {z^{2k+1}}{(2k+1)!!}.$$
The rescaling $h_k\to h_k\epsilon^{-k}$ corresponds to the factor $\epsilon^{-m}$ of $\ck_m(h)$. And introducing an additional global factor $\epsilon^{-1}$ of $y$ results in the factor $\epsilon^{2g-2+n}$ for the whole class $\ck(h)$.

The class $\bk$ of Theorem~\ref{numeric} corresponds to the specialization $h_k=(-1)^k(2k+1)!!$ in $\ck(h)$. The $y$-function of the corresponding (rescaled) spectral curve is
\begin{equation}\label{yfuncK}
y=\epsilon^{-1}\sum_{k=0}^\infty (-1)^k(2k+1)!!\epsilon^{-k}\frac {z^{2k+1}}{(2k+1)!!}=\frac{z}{z^2+\epsilon}.
\end{equation}
Then we observe that this function possesses the property that
$$y^{-1}=z+\epsilon z^{-1}$$
is regular in $\epsilon$. This observation implies immediately the statement of Theorem~\ref{numeric} for the class~$\bk$ by induction since negative powers of $\epsilon$ do not appear in the recursion~\eqref{recursion}.
\end{proof}

\begin{remark} As was mentioned above, the topological recursion relation is equivalent to the loop equation which is, in turn, an equivalent form of Virasoro constraints. Therefore, the above arguments that use topological recursion for the correlator differentials can be translated to the arguments involving Virasoro constraints for the potential $\cf$. For the case of the $y$-function~\eqref{yfuncK} the loop equation~\eqref{loop} specializes to
$$
\delta\cf=(1+\epsilon z^{-2})
\frac1{2\,dz}\left(\hbar\Bigl(\delta^2\cf-\frac{dz^2}{4z^2}\Bigr)+(\delta\cf)^2+2\,\dtt\,\cf+\dtt^2\right)+O(1)
$$
Taking the coefficients of the fixed (negative) powers of~$z$, we obtain the Virasoro constraints in the following form
\begin{equation}\label{virK}
\bigl((2m+1)!!\pd{}{t_m}-\epsilon\;L_{m-1}-L_m\bigr)e^{\hbar^{-1}\cf}=0,\quad m=0,1,2,\dots.
\end{equation}
These equations determine the function uniquely. They are regular in~$\epsilon$, therefore, the solution is also regular in~$\epsilon$.
\end{remark}

\begin{remark}
Topological recursion is defined in \cite{EOrInv} for the spectral curve $x=\frac12 z^2$, $y=y(z)$
in the following form which is equivalent to the approach described in Section~\ref{sectcf}.  Define
\[
\omega_{0,2}(z_1, z_2) = \frac{dz_1dz_2}{(z_1-z_2)^2}.
\]
and define $\omega_{g,n}$ for $2g-2+n>0$ recursively via the following equation.
\begin{equation*}
\begin{split}
\omega_{g,n}&(z_1, z_S) \\
 & =   \mathop{\text{Res}}_{z=0} K(z_1, z) \bigg[ \omega_{g-1,n+1}(z, -z, z_S) + \hspace{-1mm}\sum_{\substack{h+h'=g \\ I \sqcup J = S}}^{\circ} \omega_{h,1 + |I|}(z, z_I) \, \omega_{h',1+|J|}(-z, z_{J}) \bigg].
\end{split}
\end{equation*}
Here, $S = \{2, 3, \ldots, n\}$ and $z_I = \{z_{i_1}, z_{i_2}, \ldots, z_{i_k}\}$ for $I = \{i_1, i_2, \ldots, i_k\} \subseteq S$. The symbol $\circ$ over the inner summation means that we exclude any term that involves $\omega_{0,1}$. The recursion kernel is defined by:
\[
K(z_1,z) = \frac{1}{2}\frac{\int_{-z}^{z} \omega_{0,2}(z_1,\cdot)}{[y(z) - y( -z))]d x(z)}.
\]
The proof of Theorem~\ref{numeric} restated in these terms uses the fact that $K(z_1,z)$ depends on $y^{-1}$ hence it is regular in $\epsilon$ which implies that each $\omega_{g,n}$ is also regular in $\epsilon$.
\end{remark}

\subsubsection{Intersections with the class $\bj$.}   \label{Jweak}
The class $\bj$ corresponds to the specialization $h_k=(-1)^kk!$ in $\ck(h)$. The $y$-function of the corresponding (rescaled) spectral curve is (it was first found in~\cite{EynInv})
$$y=\epsilon^{-1}\sum_{k=0}^\infty (-1)^kk!\epsilon^{-k}\frac {z^{2k+1}}{(2k+1)!!}=
\frac{2\text{\ arcsinh}(z/\sqrt{2\epsilon})}{\sqrt{z^2+2\epsilon}}.
$$

In this case, we have that
$$y^{-1}=\frac{\sqrt{z^2+2\epsilon}}{2\,\text{\ arcsinh}(z/\sqrt{2\epsilon})}=
\frac{\epsilon }{z}+\frac{z}{3}-\frac{z^3}{45 \epsilon }+\frac{z^5}{189 \epsilon ^2}-\frac{23 z^7}{14175 \epsilon ^3}+\dots
$$
is a Laurent series in $z$ whose coefficients are not regular in $\epsilon$ any more. It turns out, however, that due to some cancellation all negative powers of $\epsilon$ disappear in the course of the recursion and the differentials $w_{g,n}$ prove to be regular in $\epsilon$.
%
%
%
%
\begin{align*}
\omega_{0,3}&=\epsilon\prod_{i=1}^3\frac{dz_i}{z_i^2}\\
\omega_{0,4}&=\left(\epsilon+\epsilon^2\sum_{i=1}^4\frac{3}{z_i^2}\right)\prod_{i=1}^4\frac{dz_i}{z_i^2}\\
\omega_{1,1}&=\frac{1}{24}\left(1+\epsilon\frac{3}{z_1^2}\right)\frac{dz_1}{z_1^2}\\
\omega_{1,2}&=\frac{1}{8}\left(\epsilon\sum_{i=1}^2\frac{2}{z_i^2}+\epsilon^2\left(\sum_{i=1}^2\frac{5}{z_i^4}+\frac{3}{z_1^2z_2^2}\right)\right)\prod_{i=1}^2\frac{dz_i}{z_i^2}\\
\omega_{2,1}&=\frac{1}{1920}\left(-\frac{2}{z_1^2}+\epsilon\frac{130}{z_1^4}+\epsilon^{2}\frac{1015}{z_1^6}+\epsilon^3\frac{1575}{z_1^{8}}\right)\frac{dz_1}{z_1^2}\\
\end{align*}

We expect that the regularity in $\epsilon$ holds true for all $(g,n)$ with $2g-2+n>0$. Thus, the proof is reduced to a purely combinatorial problem which starts with an explicit spectral curve and asks for vanishing behaviour of some coefficients of its $\omega_{g,n}$.  

Let us turn to the (presumingly) top non-vanishing term.
Consider the function
$$F^J(\hbar,t_0,t_1,...)=\sum_{g,n,\vec{k}}\frac{\hbar^{g}}{n!}\int_{\overline{\modm}_{g,n}}J_{2g-2+n}\cdot\prod_{j=1}^n\psi_j^{k_j}t_{k_j}.
$$
Corresponding to the (conjectural) limit $\epsilon\to0$ for the above specialization of the potential $\cf$. The numerical experiments show that the $\epsilon=0$ limit of $w_{g,n}$ can be nonvanishing for $n=1$ only. This means that all terms of $F^J$ of homogeneous degree $>1$ in $t$-variables are vanishing,
\begin{multline*}
F^J(\hbar,t_0,t_1,...)=\frac{t_0}{24}\hbar-\frac{t_1}{2880}\hbar^2+\frac{t_2}{120960}\hbar^3-\frac{t_3}{3225600}\hbar^4+\\
\frac{t_4}{63866880}\hbar^5-\frac{691}{697426329600}t_5\hbar^6+\frac{1}{13284311040}t_6\hbar^7+\dots
\end{multline*}

Hence it seems that $F^J$ is linear in $t_i$ and given by the unproven formula
\begin{equation}  \label{Jpot}
F^J(\hbar,t_0,t_1,...)=\sum_{g>0}\frac{(g-1)!B_{2g}}{(2g)!2^g}\cdot\hbar^{g}t_{g-1}
\end{equation}
where $B_{2g}$ is a Bernoulli number.
It is proven in \cite[Theorem 1]{FPaLog} that
$$\int_{\overline{\modm}_{g,1}}\lambda_{g-1}\lambda_g\psi^{g-1}=\int_{\overline{\modm}_{g}}\lambda_{g-1}\lambda_g\kappa_{g-2}=\frac{(-1)^{g-1}}{2^{2g-1}(2g-1)!!}\frac{B_{2g}}{2g}
$$
and from \eqref{Jpot}, which is only calculated numerically for small $g$,
$$\int_{\overline{\modm}_{g,1}}J_{g,1}\psi^{g-1}=\frac{1}{2^{2g-1}(2g-1)!!}\frac{B_{2g}}{2g}
$$
so they agree up to $(-1)^{g-1}$.
These properties give evidence towards Conjecture~\ref{conjhodge}, that $J_{g,1}=(-1)^{g-1}\lambda_{g-1}\lambda_g$.  This is discussed further in Section~\ref{hodge}.

\subsubsection{Intersections with $\kappa$ classes}

We now discuss verification of trivial intersection pairings of the classes $K_m$ and $J_m$ for $m>2g-2+n$ with arbitrary monomials in $\psi$ classes and $\kappa$ classes. Let us denote by $h^*$ the specialization of Theorem~\ref{numeric} of $h$ parameters, that is, either $h_k^*=(-1)^k(2k-1)!!$ or $h_k^*=(-1)^kk!$. Consider the potential enumerating intersection numbers of the class $\ck(h^*)$ with both $\psi$ classes and $\kappa$ monomials. It involves additional parameters $h_1,h_2,\dots$ and is obtained by replacement of the class $\ck$ in the definitions of $\cf$ and $w_{g,n}$ with the product $\ck(h^*)\ck(h)$. The product $\ck(h^*)\ck(h)$ of two exponents corresponds to the shift $s_k\to s_k+s_k^*$ of $s$-coordinates in the class $\ck$, or, in terms of $h$-coordinates, it corresponds to the change $h_k\to\sum_{i+j=k}h_i^*h_j$. Taking also into account the $\epsilon$-rescaling separating intersection numbers with the classes $\ck_m(h^*)$ with $m\le 2g-2+n$ and those with $m>2g-2+n$, we arrive at the following assertion.
\begin{proposition}
The correlator differentials $w_{g,n}$ enumerating intersection numbers with the classes $\bk$ and $\bj$ of Theorem~\ref{numeric} with arbitrary monomials in $\psi$ classes and $\kappa$ classes satisfy relations of topological recursion with the spectral curve
\begin{equation}  \label{weakspecurve}
x=\frac12 z^2,\qquad y=\sum_{k=0}^\infty\frac {z^{2k+1}}{(2k+1)!!}\sum_{i+j=k}\epsilon^{-i-1}h_i^*h_j
\end{equation}
where the sum is over $i,j\in\bn=\{0,1,2,...\}$ and we set $h_0^*=1=h_0$.  Choose
$h_k^*=(-1)^k(2k-1)!!$ for the case $\bk$ and  $h_k^*=(-1)^kk!$ for the case $\bj$, while $h_1,h_2,\dots$ are formal parameters.
\end{proposition}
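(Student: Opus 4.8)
The plan is to deduce the statement directly from Theorem~\ref{thTR}, using the multiplicative structure of $\ck$ together with a homogeneity property of topological recursion under rescaling of the $y$-function. The starting observation is that the relevant generating function is still governed by a \emph{single} class of the form $\ck$: since $\ck(h^*)\ck(h)=\exp\bigl(\sum_i(s_i^*+s_i)\kappa_i\bigr)$, the product class is itself of the form $\ck(\tilde h)$, where the Cauchy convolution $\tilde h_k=\sum_{i+j=k}h_i^*h_j$ (with $h_0^*=h_0=1$) records the additivity $s\mapsto s+s^*$ expressed in the $h$-coordinates of~\eqref{hschange}. Consequently the correlator differentials enumerating intersections of $\ck(h^*)\ck(h)$ with monomials in $\psi$ classes are exactly the differentials $w_{g,n}$ of Section~\ref{sectcf} evaluated at $\tilde h$, and by Theorem~\ref{thTR} they obey topological recursion with $x=\tfrac12z^2$ and $y=\sum_k \tilde h_k\,z^{2k+1}/(2k+1)!!$. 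Because a $\kappa$-monomial is itself captured by the formal parameters $h_1,h_2,\dots$, the $\kappa$-insertions require no separate treatment: they are already encoded in $\tilde h$.

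The second step is to introduce $\epsilon$ exactly as in the proof of Theorem~\ref{numeric}, but applied \emph{only} to the factor $\ck(h^*)$. I would rescale $h_k^*\mapsto \epsilon^{-k}h_k^*$ while leaving the formal parameters $h_j$ untouched, and then insert the global factor $\epsilon^{-1}$ into $y$. The point to verify is that this isolates the cohomological degree $m^*$ of the $\ck(h^*)$-factor alone: the degree-$m^*$ part of $\ck(h^*)$ is weighted-homogeneous of weight $m^*$ in the variables $h^*$ (by the same argument that in the proof of Theorem~\ref{numeric} shows $h_k\mapsto\epsilon^{-k}h_k$ multiplies $\ck_m$ by $\epsilon^{-m}$), and because the convolution $\tilde h_k=\sum_{i+j=k}h_i^*h_j$ keeps the $h^*$- and $h$-gradings independent, the substitution $h_k^*\mapsto\epsilon^{-k}h_k^*$ multiplies the bidegree-$(m^*,m)$ contribution precisely by $\epsilon^{-m^*}$. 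The rescaled $y$-function is then
\[
y=\epsilon^{-1}\sum_{k=0}^\infty \frac{z^{2k+1}}{(2k+1)!!}\sum_{i+j=k}\epsilon^{-i}h_i^*h_j
=\sum_{k=0}^\infty\frac{z^{2k+1}}{(2k+1)!!}\sum_{i+j=k}\epsilon^{-i-1}h_i^*h_j,
\]
which is exactly~\eqref{weakspecurve}, with $h_k^*$ specialized as above for the two cases $\bk$ and $\bj$.

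It remains to supply the homogeneity that converts the global factor $\epsilon^{-1}$ of $y$ into the factor $\epsilon^{2g-2+n}$ attached to the whole product class, and to observe that topological recursion is covariant under this scaling. A short induction on the recursion~\eqref{recursion}—with base cases $w_{0,1}=0$ and $w_{0,2}=\delta_2\dtt$ independent of $\eta=y\,dx$—shows that $w_{g,n}$ scales as $\eta^{-(2g-2+n)}$, since each step contributes one factor $1/(2\eta)$ and the Euler characteristics add correctly across the splitting (and the extraction of the principal part commutes with the scaling of $\eta$). Hence replacing $y$ by $\epsilon^{-1}y$ with $x=\tfrac12z^2$ fixed multiplies $w_{g,n}$ by $\epsilon^{2g-2+n}$, and the rescaled kernel still defines a bona fide topological recursion with spectral curve~\eqref{weakspecurve}; combining the two rescalings, the bidegree-$(m^*,m)$ part of $w_{g,n}$ acquires the weight $\epsilon^{2g-2+n-m^*}$, exactly separating the classes with $m^*\le 2g-2+n$ from those with $m^*>2g-2+n$. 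The main point requiring care is precisely this $\epsilon$-bookkeeping: one must keep the $h^*$-grading and the $\kappa$-grading carried by $h$ strictly separate, so that $\epsilon$ tracks the degree of the $\bk$- or $\bj$-factor and not the total cohomological degree. Once this is arranged, the identification of~\eqref{weakspecurve} and the covariance of topological recursion are immediate.
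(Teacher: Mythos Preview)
Your proposal is correct and follows essentially the same route as the paper: recognize that $\ck(h^*)\ck(h)=\ck(\tilde h)$ with $\tilde h_k=\sum_{i+j=k}h_i^*h_j$, invoke Theorem~\ref{thTR} to obtain topological recursion with $y=\sum_k\tilde h_k\,z^{2k+1}/(2k+1)!!$, and then apply the $\epsilon$-rescaling of the proof of Theorem~\ref{numeric} to the $h^*$-variables alone (together with the global $\epsilon^{-1}$) to arrive at~\eqref{weakspecurve}. Your write-up is somewhat more explicit than the paper about the $\eta^{-(2g-2+n)}$ homogeneity of $w_{g,n}$ under rescaling of $y$, but this is exactly the mechanism the paper already invokes in the proof of Theorem~\ref{numeric}.
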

Starting the recursion we find the differentials $w_{g,n}$ and thus the components $F_{g,n}$ of the partition function for small $g$ and $n$. For the class $\bk$ they are as follows.  (We prefer to demonstrate $F_{g,n}$'s rather than $w_{g,n}$'s since they are more compact)
\begin{align*}
F_{0,3}&=\frac{t_0^3 \epsilon }{6},\\
F_{1,1}&=\epsilon  \left(\frac{t_1}{24}-\frac{h_1 t_0}{24}\right)+\frac{t_0}{8},\\
F_{0,4}&=\epsilon ^2 \left(\frac{1}{6} t_0^3 t_1-\frac{1}{24} h_1 t_0^4\right)+\frac{t_0^4 \epsilon }{8},\\
F_{1,2}&=\epsilon ^2 \left(\frac{1}{48} \left(2 h_1^2-h_2\right) t_0^2+t_0 \left(\frac{t_2}{24}-\frac{h_1 t_1}{12}\right)+\frac{t_1^2}{48}\right)+\epsilon  \left(\frac{t_0 t_1}{4}-\frac{3}{16} h_1 t_0^2\right)+\frac{t_0^2}{16},\\
F_{2,1}&=\epsilon ^3 \left(..
\right)+
\epsilon ^2 \left(..
\right)+\epsilon  \left(\frac{\left(627 h_1^2-203 h_2\right) t_0}{1920}-\frac{407 h_1 t_1}{1920}+\frac{107 t_2}{1920}\right)\hspace{-.5mm}-\hspace{-.5mm}\frac{9 h_1 t_0}{128}+\frac{3 t_1}{128}.
\end{align*}

 For the class $\bj$ these components are
\begin{align*}
F_{0,3}&=\frac{t_0^3 \epsilon }{6},\\
F_{1,1}&=\epsilon  \left(\frac{t_1}{24}-\frac{h_1 t_0}{24}\right)+\frac{t_0}{24},\\
F_{0,4}&=\epsilon ^2 \left(\frac{1}{6} t_0^3 t_1-\frac{1}{24} h_1 t_0^4\right)+\frac{t_0^4 \epsilon }{24},\\
F_{1,2}&=\epsilon ^2 \left(\frac{1}{48} \left(2 h_1^2-h_2\right) t_0^2+t_0 \left(\frac{t_2}{24}-\frac{h_1 t_1}{12}\right)+\frac{t_1^2}{48}\right)+\epsilon  \left(\frac{t_0 t_1}{12}-\frac{1}{16} h_1 t_0^2\right),\\
F_{2,1}&=\epsilon ^3 \left(..
\right)+
\epsilon ^2 \left(..
\right)+\epsilon  \left(\frac{\left(157 h_1^2-50 h_2\right) t_0}{5760}-\frac{17 h_1 t_1}{960}+\frac{13 t_2}{2880}\right)+\frac{h_1 t_0}{2880}-\frac{t_1}{2880}.
\end{align*}

In both cases these polynomials contain only non-negative powers of $\epsilon$. This does not follow automatically from the recursion since in both cases the Laurent expansion of $y^{-1}$ in $z$ contains negative powers of $\epsilon$. However, we expect that these polynomials are regular in $\epsilon$ for all $g$ and $n$ which is equivalent to the fact that the intersection with classes $K_m$ and $J_m$ for $m>2g-2+n$ is trivial for arbitrary tautological classes.  Moreover, we observe that in the $J$ case the limit for $\epsilon\to0$ is nonvanishing in the case $n=1$ only, as expected.

We say that a class in the tautological ring $RH^*(\overline{\mathcal{M}}_{g,n})\subset H^{2*}(\overline{\mathcal{M}}_{g,n},\bq)$ vanishes {\em weakly} if  it vanishes under the intersection pairing with all other classes in $RH^*(\overline{\mathcal{M}}_{g,n})$.  In particular, in those cases where the tautological ring $RH^*(\overline{\mathcal{M}}_{g,n})$ is perfect Conjecture~\ref{conj} follows from its weak version.
\begin{corollary}  \label{regweak}
If the correlators of the spectral curve \eqref{weakspecurve} are regular in $\epsilon$ then Conjecture~\ref{conj} holds weakly, i.e. the intersection pairing of $K_m$ with all classes in $RH^*(\overline{\mathcal{M}}_{g,n})$ vanishes for $m>2g-2+n$.
\end{corollary}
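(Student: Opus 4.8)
The plan is to reduce the pairing of $K_m$ against an arbitrary tautological class to a product of the single–moduli–space pairings already controlled by the Proposition, exploiting the multiplicative behaviour of $\bk$ under the gluing maps. I would begin from the standard description of the tautological ring: $RH^*(\overline{\modm}_{g,n})$ is spanned over $\bq$ by the decorated boundary classes $\xi_{\Gamma*}\bigl(\prod_v\theta_v\bigr)$, where $\Gamma$ runs over stable graphs of genus $g$ with $n$ legs, $\xi_\Gamma\colon\overline{\modm}_\Gamma=\prod_v\overline{\modm}_{g_v,n_v}\to\overline{\modm}_{g,n}$ is the associated gluing map, and each $\theta_v$ is a monomial in $\psi$ and $\kappa$ classes \cite{FPaCon}. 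Because weak vanishing only concerns pairing against a spanning set, it suffices to show $\int_{\overline{\modm}_{g,n}}K_m\cdot\xi_{\Gamma*}\bigl(\prod_v\theta_v\bigr)=0$ for each such generator when $m>2g-2+n$, the case $(m,n)=(3g-3,0)$ excepted, where $K_{3g-3}$ is a nonzero multiple of the point class.

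The crucial structural input is that $\kappa$ classes are additive under every gluing map, $\xi_\Gamma^*\kappa_a=\sum_v\kappa_a^{(v)}$, so that $\bk$ is multiplicative, $\xi_\Gamma^*\bk=\boxtimes_v\bk$, and hence $\xi_\Gamma^*K_m=\sum_{\sum_v m_v=m}\boxtimes_v K_{m_v}$. Applying the projection formula and factorising the integral over the product $\overline{\modm}_\Gamma$ would then give
\[
\int_{\overline{\modm}_{g,n}}K_m\cdot\xi_{\Gamma*}\Bigl(\prod_v\theta_v\Bigr)=\sum_{\sum_v m_v=m}\ \prod_v\int_{\overline{\modm}_{g_v,n_v}}K_{m_v}\cdot\theta_v.
\]
The remaining degree bookkeeping is the clean part of the argument: for any stable graph one has the identity $\sum_v(2g_v-2+n_v)=2g-2+n$, coming from $g=\sum_v g_v+h^1(\Gamma)$ and $\sum_v n_v=n+2\,\#E(\Gamma)$, so that $m=\sum_v m_v>2g-2+n$ forces $m_v>2g_v-2+n_v$ at some vertex $v$. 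Since every vertex of a stable graph with at least one edge satisfies $n_v\ge1$, this vertex is never of the exceptional type $(m_v,n_v)=(3g_v-3,0)$, and the Proposition together with the assumed regularity in $\epsilon$ makes the corresponding factor---and hence the whole pairing---vanish. This disposes of every generator supported on a graph with an edge, and also of the trivial graph when $n\ge1$, where the pairing is a single $\psi$--$\kappa$ pairing directly covered by the Proposition.

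The hard part will be the trivial graph with $n=0$, namely the pure-$\kappa$ pairings $\int_{\overline{\modm}_g}K_m\cdot\theta$ on $\overline{\modm}_g$ for $2g-1\le m\le3g-4$. These do not appear among the correlator differentials $w_{g,n}$, which require $n\ge1$; they are encoded instead in the genus-$g$ free energy $F_{g,0}=\int_{\overline{\modm}_g}\ck(h^*)\ck(h)$, which is genuinely singular at $\epsilon=0$---its most negative coefficient $\epsilon^{1-g}\int_{\overline{\modm}_g}K_{3g-3}$ is precisely the $(3g-3,0)$ exception of Conjecture~\ref{conj}. Weak vanishing in the intermediate range is the statement that $F_{g,0}$ carries only this single top-degree pole and no others. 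I would try to extract it from $w_{g,1}$ through the dilaton relation of topological recursion, which expresses $F_{g,0}$ as a residue of $\Phi(z)\,w_{g,1}(z)$ with $\Phi$ a primitive of $\eta=y\,dx$: since $w_{g,1}$ is regular by hypothesis, the poles of $F_{g,0}$ can only come from the negative $\epsilon$-powers of $\Phi$, and the task reduces to showing that the residue pairing annihilates all of them but the leading one. Controlling this cancellation---matching the $\epsilon$-pole structure of $\Phi$ against the bounded pole order of $w_{g,1}$ at the ramification point $z=0$---is where I expect the genuine obstacle to lie.
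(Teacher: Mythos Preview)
Your approach is essentially the paper's: decompose a tautological class over decorated stable graphs, use the multiplicativity $\xi_\Gamma^*\bk=\boxtimes_v\bk$ and the projection formula to factor the pairing over vertices, and then do the degree bookkeeping. The only cosmetic difference is that you track the $K$-degree $m_v$ at each vertex while the paper tracks the complementary decoration degree $\deg p_v$; these are dual via $m_v=3g_v-3+n_v-\deg p_v$, and your identity $\sum_v(2g_v-2+n_v)=2g-2+n$ is exactly the paper's observation that $E(\Gamma)+\sum_v(g_v-1)=g-1$.

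You are right to flag the trivial-graph case with $n=0$: the correlators $w_{g,n}$ live only for $n\ge1$, so the regularity hypothesis does not literally cover $F_{g,0}$, and the paper's proof glosses over this. But the fix is far simpler than the dilaton analysis you outline. The pushforward relation \eqref{pushf},
\[
\pi_*K_{m+1}=(6g-6-2m)K_m\quad\text{on }\overline{\modm}_g,
\]
is nonzero precisely for $m<3g-3$, so for every $m$ in the relevant range $2g-2<m<3g-3$ one has
\[
\int_{\overline{\modm}_g}K_m\cdot\omega=\frac{1}{6g-6-2m}\int_{\overline{\modm}_{g,1}}K_{m+1}\cdot\pi^*\omega,
\]
and the right-hand side is a pairing on $\overline{\modm}_{g,1}$ of $K_{m+1}$ (with $m+1>2g-1$) against a $\psi$--$\kappa$ polynomial $\pi^*\omega$---directly covered by regularity of $w_{g,1}$. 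This is the same reduction the paper uses in Proposition~\ref{equivconj}. The excepted case $m=3g-3$ is exactly where $6g-6-2m=0$ and the reduction fails, consistent with $K_{3g-3}$ being potentially nonzero on $\overline{\modm}_g$. So there is no need to wrestle with poles of a primitive of $y\,dx$; the ``hard part'' you anticipate dissolves in one line.
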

\begin{proof}
Let $G_{g,n}$ be the finite set of stable graphs of type $(g,n)$.  Each $\Gamma\in G_{g,n}$ defines a stratum $\phi_\Gamma:\overline{\modm}_\Gamma\to\overline{\modm}_{g,n}$ and any tautological class $\omega\in RH^m(\overline{\mathcal{M}}_{g,n})$ is given by a sum over stable graphs
\[\omega=\sum_{\Gamma\in G_{g,n}}(\phi_\Gamma)_*p_\Gamma
\]
where $p_\Gamma=\bigotimes_{v\in\Gamma}p_v$ is a product of polynomials $p_v$ in $\psi$ classes and $\kappa$ classes defined at each vertex $v$ of $\Gamma$.  Now
\[ \int_{\overline{\modm}_{g,n}}\bk\cdot\omega=\sum_{\Gamma\in G_{g,n}}\int_{\overline{\modm}_{g,n}}\bk\cdot(\phi_\Gamma)_*p_\Gamma 
=\sum_{\Gamma\in G_{g,n}}\int_{\overline{\modm}_\Gamma}\bk_\Gamma\cdot p_\Gamma
\]
where the second equality uses $\phi_\Gamma^*\bk=\bk_\Gamma$ which consists of a factor of $\bk$ at each vertex of $\Gamma$.
Let $E(\Gamma)$ and $V(\Gamma)$ be the number of interior edges, respectively vertices, of $\Gamma$.   When $V(\Gamma)=1$ and $E(\Gamma)=0$, $\omega$ is a polynomial in $\psi$ classes and $\kappa$ classes, and the vanishing of $\bk\cdot\omega$ follows from regularity in $\epsilon$ of the correlators of the spectral curve \eqref{weakspecurve}.

More generally, if $V(\Gamma)=1$ and $E(\Gamma)>0$, then
\[ g-1>\deg\omega=E(\Gamma)+\deg p_v=g-g_v+\deg p_v\]
hence $\deg p_v<g_v-1$, and
\[\int_{\overline{\modm}_{g,n}}\bk\cdot\omega=\int_{\overline{\modm}_{g_v,n_v}}\bk\cdot p_v=0\]
where $(g_v,n_v)=(g-E(\Gamma),n+2E(\Gamma))$ and the second equality follows from the initial case of $V(\Gamma)=1$ and $E(\Gamma)=0$.

In full generality, we have
\[ g-1>\deg\omega=E(\Gamma)+\sum_v\deg p_v
\]
hence there exists a vertex $v$ for which $p_v<g_v-1$ since otherwise
\[ g-1>\deg\omega\geq E(\Gamma)+\sum_v(g_v-1)=E(\Gamma)-V(\Gamma)+\sum_vg_v=b_1(\Gamma)+\sum_vg_v=g\]
which is a contradiction.  On that vertex, $\int_{\overline{\modm}_{g(v),n(v)}}\bk\cdot p_v=0$ by the $V(\Gamma)=1$ cases proven above.  Hence $\bk\cdot\omega=0$ as required.
\end{proof}

\section{Relation to the Br\'ezin-Gross-Witten tau function}  \label{sec:relbgw}

\subsection{KdV tau functions}   \label{sec:kdv}

A tau function $Z(t_0,t_1,...)$ of the KdV hierarchy (equivalently the KP hierarchy in odd times $p_{2k+1}=t_k/(2k+1)!!$) gives rise to a solution $U=\hbar\frac{\partial^2}{\partial t_0^2}\log Z$ of the KdV hierarchy
\begin{equation}\label{kdv}
U_{t_1}=UU_{t_0}+\frac{\hbar}{12}U_{t_0t_0t_0},\quad U(t_0,0,0,...)=f(t_0).
\end{equation}
The first equation in the hierarchy is the KdV equation \eqref{kdv}, and later equations $U_{t_k}=P_k(U,U_{t_0},U_{t_0t_0},...)$ for $k>1$ determine $U$ uniquely from $U(t_0,0,0,...)$.  See \cite{MJDSol} for the full definition.

The Br\'ezin-Gross-Witten solution $U^{\text{BGW}}=\hbar\frac{\partial^2}{\partial t_0^2}\log Z^{\text{BGW}}$ of the KdV hierarchy arises out of a unitary matrix model studied in \cite{BGrExt,GWiPos}.  It is defined by the initial condition
\begin{equation*}
\hbar\frac{\partial^2}{\partial t_0^2}\log Z^{\text{BGW}}(t_0,0,0,...)=U^{\text{BGW}}(t_0,0,0,...)=\frac{\hbar}{8(1-t_0)^2}.
\end{equation*}
It arises out of topological recursion applied to the spectral curve $(x,y)=(\frac12z^2,1/z)$ \cite{DNoTop} and more generally to any spectral curve with points locally of this form near a zero of $dx$, \cite{CNoTop}.

The low genus $g$ terms (= coefficient of $\hbar^{g-1}$) of $\log Z^{\text{BGW}}$ are
\begin{align*}
\log Z^{\text{BGW}}&=-\frac{1}{8}\log(1-t_0)+\hbar\frac{3}{128}\frac{t_1}{(1-t_0)^3}+\hbar^2\frac{15}{1024}\frac{t_2}{(1-t_0)^5}\\
&\hspace{4.5cm}+\hbar^2\frac{63}{1024}\frac{t_1^2}{(1-t_0)^6}+O(\hbar^3) \\
&=(\frac{1}{8}t_0+\frac{1}{16}t_0^2+\frac{1}{24}t_0^3+...)+\hbar(\frac{3}{128}t_1+\frac{9}{128}t_0t_1+...)\\
&\hspace{4cm}+\hbar^2(\frac{15}{1024}t_2+\frac{63}{1024}t_1^2+...)+...
\end{align*}

The tau function $Z^{\text{BGW}}(\hbar,t_0,t_1,...)$ shares many properties of the famous Kont\-sevich-Witten tau function $Z^{\text{KW}}(\hbar,t_0,t_1,...)$ introduced in \cite{WitTwo}.  The Kontsevich-Witten tau function $Z^{\text{KW}}$ is defined by the initial condition $U^{\text{KW}}(t_0,0,0,...)=t_0$ for $U^{\text{KW}}=\hbar\frac{\partial^2}{\partial t_0^2}\log Z^{\text{KW}}$.  It arises out of topological recursion applied to the curve $(x,y)=(\frac12z^2,z)$ \cite{EOrTop}.The low genus terms of $\log Z^{\text{KW}}$ are
$$\log Z^{\text{KW}}(\hbar,t_0,t_1,...)=\hbar^{-1}(\frac{t_0^3}{3!}+\frac{t_0^3t_1}{3!}+\frac{t_0^4t_2}{4!}+...)+\frac{t_1}{24}+...
$$
\begin{theorem}[Witten-Kontsevich 1992 \cite{KonInt,WitTwo}]   \label{KW}
$$Z^{\text{KW}}(\hbar,t_0,t_1,...)=\exp\sum_{g,n}\hbar^{g-1}\frac{1}{n!}\sum_{\vec{k}\in\bn^n}\int_{\overline{\modm}_{g,n}}\prod_{i=1}^n\psi_i^{k_i}t_{k_i}
$$
is a tau function of the KdV hierarchy.
\end{theorem}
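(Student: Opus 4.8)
The plan is to prove the \emph{$2$-reduction}: that $Z^{\text{KW}}$ is a KP tau function whose associated point $W$ in the Sato Grassmannian is invariant under multiplication by $z^2$, which is exactly the condition that $Z^{\text{KW}}$ be a tau function of the KdV hierarchy (the $2$-reduction realized in the odd times $p_{2k+1}=t_k/(2k+1)!!$). Equivalently, I want $U^{\text{KW}}=\hbar\,\partial_{t_0}^2\log Z^{\text{KW}}$ to solve \eqref{kdv} together with its higher flows. I would separate the argument into two parts: (A) producing enough integrable/Virasoro structure on $Z^{\text{KW}}$ directly from the geometry of $\overline{\modm}_{g,n}$, and (B) converting that structure into the $2$-reduction condition.

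For part (A), I would first record that $F^{\text{KW}}(\hbar,t)=\cf(\hbar,t,0)$ is the $s=0$ specialization of the $\kappa$-potential, so by the $h=0$ case of Theorem~\ref{thTR} its correlator differentials $w_{g,n}$ obey topological recursion on the Airy spectral curve $x=\tfrac12 z^2$, $y=z$, matching \cite{EOrTop}; via the master loop equation of that proof this is equivalent to the Virasoro constraints of Corollary~\ref{Vir} at $h=0$, i.e.\ the Dijkgraaf--Verlinde--Verlinde constraints for $Z^{\text{KW}}$. The lowest two of these, the string and dilaton equations, are elementary and follow without any integrability input from the comparison $\psi_j=\pi^*\psi_j+D_{j,n+1}$ of $\psi$-classes under the forgetful map $\pi:\overline{\modm}_{g,n+1}\to\overline{\modm}_{g,n}$. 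The full Virasoro tower, however, lies at the depth of the theorem itself, so a self-contained proof cannot simply invoke Corollary~\ref{Vir}; this is the crux.

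The route I would actually carry out for part (A) is Kontsevich's: use a Jenkins--Strebel cell decomposition of the decorated moduli space to rewrite $\sum_{|k|=3g-3+n}\int_{\overline{\modm}_{g,n}}\prod_i\psi_i^{k_i}\prod_i\frac{(2k_i-1)!!}{\lambda_i^{2k_i+1}}$ as a weighted sum over trivalent ribbon graphs, then recognize that sum as the Feynman expansion of the Airy matrix integral $\int dM\,\exp\!\big(-\tfrac12\mathrm{tr}(\Lambda M^2)+\tfrac{i}{6}\mathrm{tr}(M^3)\big)$ under the Miwa substitution $t_k=-(2k-1)!!\,\mathrm{tr}\,\Lambda^{-(2k+1)}$. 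I expect this step, in particular controlling the combinatorics and convergence of the Strebel decomposition together with the orientation and measure bookkeeping, to be the \emph{main obstacle}, since it is where the geometry genuinely enters.

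For part (B), I would invoke the Sato--Segal--Wilson dictionary together with the Kac--Schwarz method: the Airy matrix integral, equivalently the Virasoro-constrained solution, corresponds to the Grassmannian plane $W$ spanned by a formal Airy-type function $\Phi(z)$ and its $z$-derivatives, with the string equation $L_{-1}Z^{\text{KW}}=0$ encoding the first-order operator that fixes $W$. From this explicit description the invariance $z^2 W\subseteq W$ is manifest, which yields the KdV tau-function property and hence the theorem. The advantage of routing part (A) through the matrix integral is that both the tau-function property and the Virasoro tower then issue from the single, independently established integrable structure of that integral, so the argument is free of the circularity that would arise from assuming Corollary~\ref{Vir} outright.
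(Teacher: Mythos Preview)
The paper does not prove this theorem: it is stated with attribution to \cite{KonInt,WitTwo} and used as a black box (indeed Corollary~\ref{Vir} and the master loop equation in the proof of Theorem~\ref{thTR} explicitly rest on the already-known Virasoro constraints for $F^{\text{KW}}$). So there is no ``paper's own proof'' to compare against.

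Your outline is essentially Kontsevich's original argument: Strebel/ribbon-graph combinatorics to identify the generating series with the asymptotic expansion of the Airy matrix integral, followed by the Kac--Schwarz/Sato-Grassmannian description to extract the $2$-reduction. You are right to flag that invoking Corollary~\ref{Vir} or Theorem~\ref{thTR} at $h=0$ would be circular, since in this paper those statements are \emph{derived from} the Virasoro constraints for $Z^{\text{KW}}$ rather than established independently. The one place I would tighten the sketch is part~(B): the Kac--Schwarz operator and the identification of the plane $W$ come most cleanly from the integral representation of the matrix model (or its determinantal/$N$-soliton form), not from the string equation alone; $L_{-1}Z=0$ by itself does not pin down $W$ without the accompanying dilaton/$L_0$ constraint and the explicit Airy basis $\Phi, \partial_z\Phi,\dots$. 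Once that basis is in hand, $z^2W\subset W$ follows from the Airy ODE $\Phi''=z^2\Phi$ (up to normalization), which is the clean mechanism you want to name.
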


\subsection{BGW tau function from the limit of intersection numbers}

By the proof of Theorem~\ref{numeric} the $\epsilon$-rescaled potential for intersection numbers of the class $\bk$ with $\psi$ monomials, with the constant term removed, admits a limit for $\epsilon\to 0$. The limit is a similar potential enumerating intersection numbers of $\psi$ monomials with the component $K_{2g-2+n}$ of $\bk$. Denote this limit by $F^K$,
$$F^K(\hbar,t_0,t_1,...)=\sum_{g,n,\vec{k}}\frac{\hbar^{g}}{n!}\int_{\overline{\modm}_{g,n}}K_{2g-2+n}\cdot\prod_{j=1}^n\psi_j^{k_j}t_{k_j}.
$$

\begin{theorem} \label{limBGW}
$e^{\hbar^{-1}F^K}$ is the BGW tau function.
\end{theorem}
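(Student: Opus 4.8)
The plan is to recognise $F^K$ as the $\epsilon\to0$ limit of the $\epsilon$-rescaled $\kappa$-potential associated with $\bk$, to identify the limiting spectral curve with the Br\'ezin--Gross--Witten spectral curve $(x,y)=(\tfrac12 z^2,1/z)$, and then to invoke the result of \cite{DNoTop} that topological recursion on that curve generates $Z^{\text{BGW}}$.

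First I would recall from the proof of Theorem~\ref{numeric} that, after the $\epsilon$-rescaling, the correlator differentials $w_{g,n}$ enumerating intersection numbers of $\bk$ with monomials in $\psi$ classes obey topological recursion for the spectral curve
$$x=\tfrac12 z^2,\qquad y=\frac{z}{z^2+\epsilon}$$
of \eqref{yfuncK}, and are regular at $\epsilon=0$ because the recursion is controlled by $y^{-1}=z+\epsilon z^{-1}$. Taking the limit in the $y$-function gives $\lim_{\epsilon\to0}y=1/z$, so the limiting spectral curve is precisely $(x,y)=(\tfrac12 z^2,1/z)$.

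The second step is to upgrade this limit of curves to a limit of correlators, namely to show that $\lim_{\epsilon\to0}w_{g,n}$ is the topological recursion correlator of $(\tfrac12 z^2,1/z)$. I would argue this by induction on $2g-2+n$ from the residue form of the recursion recalled at the end of Section~\ref{sectcf}: the Bergman kernel $w_{0,2}$ is independent of $\epsilon$, and the recursion kernel depends on the curve only through $y^{-1}=z+\epsilon z^{-1}$, which is regular in $\epsilon$ and converges as $\epsilon\to0$ to the value $z$ defining the kernel of the BGW curve. Since each $w_{g,n}$ is a single residue at $z=0$ of the kernel times a finite polynomial expression in lower correlators, which are regular and convergent by the inductive hypothesis, the $\epsilon\to0$ limit commutes with the recursion. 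Hence the limits $\lim_{\epsilon\to0}w_{g,n}$ satisfy topological recursion for $(\tfrac12 z^2,1/z)$ and, by uniqueness of its solution, coincide with the BGW correlators.

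Finally, since $F^K$ is the $\epsilon\to0$ limit of the rescaled free energies $F_{g,n}$ and $w_{g,n}=\delta_1\cdots\delta_n F_{g,n}$ with $\delta_i$ invertible on intersection numbers, equality of the limiting correlators with the BGW correlators promotes to equality of free energies, so $\hbar^{-1}F^K=\log Z^{\text{BGW}}$ once the unstable and constant contributions are matched against the normalisation of $Z^{\text{BGW}}$. The main obstacle is the second step --- justifying that the limit $\epsilon\to0$ may be taken inside the topological recursion --- but this is exactly the regularity established in the proof of Theorem~\ref{numeric}, so the work reduces to tracking the limit of the spectral curve and its kernel carefully.
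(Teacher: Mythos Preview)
Your argument is correct and is essentially the paper's first proof, phrased in the equivalent language of topological recursion rather than Virasoro constraints: the paper passes to the $\epsilon\to0$ limit in \eqref{virK} to obtain $\bigl((2m+1)!!\partial_{t_m}-L_m\bigr)e^{\hbar^{-1}F^K}=0$, recognises these as the BGW Virasoro constraints, and invokes their uniqueness via \cite{GNeUni} (where you instead invoke \cite{DNoTop} for the spectral curve $(\tfrac12 z^2,1/z)$). The paper also supplies a second, independent proof which you did not find: $\cf$ is $F^{KW}$ with shifted times and hence a KdV solution for every $\epsilon$, so the limit $F^K$ is KdV, and the initial condition $F^K(t_0,0,0,\dots)=-\tfrac{\hbar}{8}\log(1-t_0)$ is then verified directly from the Itzykson--Zuber closed form for $F_1^{KW}$.
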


\begin{proof}
We provide two proofs of this fact. First, by (the proof of) Theorem~\ref{numeric}, the potential for the intersection numbers with the whole class $\bk$ is govern by the topological recursion with the spectral curve $(x,y)=(\frac12z^2,\frac{z}{\epsilon+z^2})$ which is equivalent to the Virasoro constraints~\eqref{virK}. This implies the existence of the limit $\epsilon\to 0$ for the potential, and also the fact that the limit satisfies Virasoro equations of the form
$$
\bigl((2m+1)!!\pd{}{t_m}-L_m\bigr)e^{\hbar^{-1}F^K}=0,\quad m=0,1,2,\dots.
$$
These are exactly Virasoro constraints for the BGW potential, and these equations determine the function uniquely, see e.g.~\cite{GNeUni}. As a byproduct of this proof, we recover the spectral curve $(x,y)=(\frac12z^2,\frac{1}{z})$ for the BGW potential $F^K$ found in~\cite{EOrTop}.

For the second proof, we observe that the potential $\cf$ of Sect.~\ref{sectcf} being the Kontsevich-Witten potential with shifted times, is a solution to the KdV hierarchy for any choice of parameters $h_i$. It follows that the potential $F^K$ as a limit of a family of solutions is also a solution of the KdV hierarchy.

It remains to check the initial conditions for $F^K|_{t_1=t_2=\dots=0}$. This part of the potential enumerates intersection numbers of $K_{2g-2+n}$ with no insertion of $\psi$ classes, that is, those of the form $\int_{\overline{\modm}_{g,n}}K_{2g-2+n}$. By dimensional reason, these numbers are nonzero in the case $g=1$ only, and by Theorem~\ref{KWshift} we have
$$F^K(t_0,0,0,\dots)=\hbar\sum_{n=0}^\infty\int_{\overline{\modm}_{1,n}}K_{n}\frac{t_0^n}{n!}=
\hbar F_1^{\text{KW}}(t_0,0,3!!,-5!!,7!!,\dots).$$
Fortunately, the $g=1$ component $F_1^{\text{KW}}$ of the Kontsevich-Witten potential is known. By the Itzykson-Zuber anzatz~\cite{IZuCom} (a sequence of the string and dilaton equations), it is given by
$$F_1^{\text{KW}}=-\frac1{24}\log\Bigl(1-\sum_{k=0}^\infty t_{k+1}\frac{Y^k}{k!}\Bigr)$$
where $Y=Y(t_0,t_1,\dots)$ is the solution of the implicit equation
$$Y=\sum_{k=0}^\infty  t_{k}\frac{Y^k}{k!}.$$
Denote $y(t_0)=Y(t_0,0,3!!,-5!!,7!!,\dots)$. Then the implicit equation specializes to
$$y=1+\sum_{k=2}^\infty(-1)^k(2k-1)!!\frac{y^k}{k!}=t_0+y+\frac{1}{\sqrt{1+2 y}}-1$$
which gives $\frac{1}{\sqrt{2 y+1}}=1-t_0$. Then, $1-\sum_{k=0}^\infty t_{k+1}\frac{Y^k}{k!}$ specializes to
$$1-\sum_{k=2}^\infty(-1)^k(2k-1)!!\frac{y^{k-1}}{(k-1)!}=\frac{1}{(1+2 y)^{3/2}}=(1-t_0)^3,$$
and we get finally
$$F^K(t_0,0,0,\dots)=-\frac{\hbar}{24}\log(1-t_0)^3=-\frac{\hbar}{8}\log(1-t_0),$$
$$\pd{^2F^K(t_0,0,0,\dots)}{t_0^2}=\frac{\hbar}{8(1-t_0)^2}.$$
This coincides with the initial condition for the BGW potential.
\end{proof}

In Section~\ref{spincohft} we relate Theorem~\ref{limBGW} to a conjectural relationship between a family of classes $\Theta_{g,n}\in H^{2(2g-2+n)}(\overline{\modm}_{g,n},\bq)$ and the BGW potential.

\section{Cohomological field theories}  \label{sec:cohft}

A {\em cohomological field theory} is a pair $(V,\eta)$ composed of a finite-dimensional complex vector space $V$ equipped with a metric $\eta$ and a sequence of $S_n$-equivariant maps.
\[ \Omega_{g,n}:V^{\otimes n}\to H^*(\overline{\modm}_{g,n},\bq)\]
that satisfy compatibility conditions from inclusion of strata:
$$\phi_{\text{irr}}:\overline{\modm}_{g-1,n+2}\to\overline{\modm}_{g,n},\quad \phi_{h,I}:\overline{\modm}_{h,|I|+1}\times\overline{\modm}_{g-h,|J|+1}\to\overline{\modm}_{g,n},\ \  I\sqcup J=\{1,...,n\}$$
given by
\begin{align}
\phi_{\text{irr}}^*\Omega_{g,n}(v_1\otimes...\otimes v_n)&=\Omega_{g-1,n+2}(v_1\otimes...\otimes v_n\otimes\Delta)
  \label{glue1}
\\
\phi_{h,I}^*\Omega_{g,n}(v_1\otimes...\otimes v_n)&=\pi_1^*\Omega_{h,|I|+1}\otimes \pi_2^*\Omega_{g-h,|J|+1}\big(\bigotimes_{i\in I}v_i\otimes\Delta\otimes\bigotimes_{j\in J}v_j\big) \label{glue2}
\end{align}
where $\Delta\in V\otimes V$ is dual to the metric $\eta\in V^*\otimes V^*$, and $\pi_i$, $i=1,2$ is projection onto the $i$th factor of $\overline{\modm}_{h,|I|+1}\times\overline{\modm}_{g-h,|J|+1}$.  When $n=0$, $\Omega_g:=\Omega_{g,0}\in H^*(\overline{\modm}_{g},\bq)$.   
The CohFT has a {\em unit} if there exists a vector $\un\in V$ which satisfies
\begin{equation}  \label{unit}
\Omega_{0,3}(\un\otimes v_1\otimes v_2)=\eta(v_1,v_2).
\end{equation}
We also consider the case when $\Omega_{g,n}$ satisfies \eqref{glue1} and \eqref{glue2} but not \eqref{unit} and refer to this as a CohFT without unit.

The CohFT has {\em flat unit} if $\un\in H$ is compatible with the forgetful map $\pi:\overline{\modm}_{g,n+1}\to\overline{\modm}_{g,n}$ by
\begin{equation}   \label{cohforget}
\Omega_{g,n+1}(\un\otimes v_1\otimes...\otimes v_n)=\pi^*\Omega_{g,n}(v_1\otimes...\otimes v_n)
\end{equation}
for $2g-2+n>0$.

Given a CohFT $\Omega=\{\Omega_{g,n}\}$ and a basis $\{e_1,...,e_N\}$ of $V$, the partition function of $\Omega$ is defined by:
\begin{equation*}   
Z_{\Omega}(\hbar,\{t^{\alpha}_k\})=\exp\sum_{g,n,\vec{k}}\frac{\hbar^{g-1}}{n!}\int_{\overline{\modm}_{g,n}}\Omega_{g,n}(e_{\alpha_1}\otimes...\otimes e_{\alpha_n})\cdot\prod_{j=1}^n\psi_j^{k_j}\prod t^{\alpha_j}_{k_j}
\end{equation*}
for $\alpha_i\in\{1,...,N\}$ and $k_j\in\bn$.

In the case of one-dimensional CohFTs, i.e. $\dim V=1$, with unit $\un\in V$ that satisfies $\eta(\un,\un)=1$ (which is related to the more general $\eta(\un,\un)\in\bc$ by a simple rescaling), we identify $\Omega_{g,n}$ with its image $\Omega_{g,n}(\un^{\otimes n})$, and write $\Omega_{g,n}\in H^*(\overline{\modm}_{g,n},\bq)$.

Given two CohFTs $\Omega^i$ defined on $(V_i,\eta_i)$, $i=1,2$, they define a CohFT $\Omega^1\otimes\Omega^2$ on $(V_1\otimes V_2,\eta_1\otimes\eta_2)$ via the cup products $\Omega^1_{g,n}(u_1\otimes...\otimes u_n)\cup\Omega^2_{g,n}(v_1\otimes...\otimes v_n)$, and known as the tensor product.  The trivial CohFT $\un$ defined by $\un_{g,n}=1\in H^0(\overline{\modm}_{g,n},\bq)$ acts as a unit with respect to tensor product.  An {\em invertible} CohFT is any one-dimensional CohFT $\Omega^1$ such that there exists $\Omega^2$, its inverse, with $\Omega^1\otimes\Omega^2=\un$.   Invertible CohFTs were classified by Manin-Zograf \cite{MZoInv} and Teleman \cite{TelStr}.
From the properties $\phi_{\text{irr}}^*\kappa_m=\kappa_m$ and $\phi_{h,I}^*\kappa_m=\pi_1^*\kappa_m+\pi_2^*\kappa_m$, proven in \cite{ACoCom}, and $\phi_{\text{irr}}^*\mu_m=\mu_m$ and $\phi_{h,I}^*\mu_m=\pi_1^*\mu_m+\pi_2^*\mu_m$, for $\mu_k=ch_k(\be)$, which uses the decomposition of the restriction of the Hodge bundle to the boundary, Manin and Zograf \cite{MZoInv} proved that $\exp{\sum_{i\geq 0}(s_i\kappa_i+t_i\mu_{2i+1})}$ defines an invertible CohFT.  The converse was proven by Teleman in \cite{TelStr} giving the following theorem.
\begin{theorem}[\cite{MZoInv,TelStr}]
For any $ \{s_i,t_i\}_{i\in\bn}\subset\bc$
\[\Omega_{g,n}=\exp{\sum_{i\geq 0}(s_i\kappa_i+t_i\mu_{2i+1})}\]
defines an invertible CohFT and any invertible CohFT is of this form.
\end{theorem}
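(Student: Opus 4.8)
The plan is to prove the two directions separately: that each exponential class is an invertible CohFT (the Manin--Zograf direction), and that every invertible CohFT arises this way (Teleman's classification).

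\emph{Forward direction.} Write $A=\sum_{i\geq 0}(s_i\kappa_i+t_i\mu_{2i+1})\in H^{>0}(\overline{\modm}_{g,n},\bq)$, so that $\Omega_{g,n}=\exp(A)$; since every summand has even cohomological degree and $H^*(\overline{\modm}_{g,n})$ is bounded in degree, this exponential is a well-defined finite sum and the factors below commute. First I would exponentiate the additivity relations recorded above from \cite{ACoCom}, namely $\phi_{\text{irr}}^*\kappa_m=\kappa_m$, $\phi_{h,I}^*\kappa_m=\pi_1^*\kappa_m+\pi_2^*\kappa_m$ and the analogous relations for $\mu_m=ch_m(\be)$. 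These give $\phi_{\text{irr}}^*A=A$ and $\phi_{h,I}^*A=\pi_1^*A+\pi_2^*A$; because the splitting vector is $\Delta=\un\otimes\un$ for a one-dimensional CohFT with $\eta(\un,\un)=1$, applying $\exp$ and using that it converts the sum $\pi_1^*A+\pi_2^*A$ into the cup product $\pi_1^*\exp(A)\cup\pi_2^*\exp(A)$ yields exactly \eqref{glue1} and \eqref{glue2}. The unit axiom \eqref{unit} holds because $\overline{\modm}_{0,3}$ is a point, so $A$ restricts to $0$ there and $\Omega_{0,3}=\exp(0)=1=\eta(\un,\un)$. Invertibility is then immediate: $\exp(-A)=\exp\sum_{i\geq 0}(-s_i\kappa_i-t_i\mu_{2i+1})$ is again of the given form, defines a CohFT by the same argument, and $\exp(A)\cup\exp(-A)=1=\un_{g,n}$, exhibiting the tensor inverse.

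\emph{Converse direction.} Given an invertible CohFT $\Omega$, the relation $\Omega\otimes\Omega^{-1}=\un$ forces the degree-zero part of each $\Omega_{g,n}$ to be a nonzero element of $H^0=\bc$ (normalized to $1$ via $\Omega_{0,3}=\eta(\un,\un)=1$), so $L_{g,n}:=\log\Omega_{g,n}\in H^{>0}$ is well defined. Taking logarithms in \eqref{glue1} and \eqref{glue2}, where the cup product becomes a sum, shows that $\{L_{g,n}\}$ is an \emph{additive} (primitive) system of classes: $\phi_{\text{irr}}^*L_{g,n}=L_{g-1,n+2}$ and $\phi_{h,I}^*L_{g,n}=\pi_1^*L+\pi_2^*L$. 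The task thus reduces to classifying such additive systems, and I would establish that every one is a linear combination $\sum_i(s_i\kappa_i+t_i\mu_{2i+1})$ via Teleman's theorem \cite{TelStr}: an invertible, hence rank-one and semisimple, CohFT is obtained from the trivial CohFT $\un$ by the Givental--Teleman group action (R-matrix together with translation), and that action on $\un$ produces precisely the exponential of an additive combination of $\kappa$ classes and odd Hodge Chern characters. That only the odd classes $\mu_{2i+1}$ survive is Mumford's relation $ch_{2i}(\be)=0$ for $i>0$, which follows from $c(\be\oplus\be^\vee)=1$.

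The main obstacle is the converse, specifically showing that the space of additive systems is no larger than the span of $\{\kappa_i\}$ and $\{\mu_{2i+1}\}$ --- that no further primitive classes survive on $\overline{\modm}_{g,n}$. The engine here is the determination of the relevant stable cohomology (Mumford's conjecture, the Madsen--Weiss theorem), which identifies the primitive part as generated by the $\kappa$ classes, with the Hodge classes accounting for the remaining additive invariants through Mumford's Grothendieck--Riemann--Roch computation. Matching the group-theoretic parameters of Teleman's reconstruction to the $(s_i,t_i)$ appearing here is the substantive content; by contrast the forward direction is an essentially formal consequence of the additivity of $\kappa$ and $\mu$ under the gluing maps.
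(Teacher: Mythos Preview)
The paper does not give a proof of this theorem: it is stated with attribution to \cite{MZoInv} and \cite{TelStr}, and the sentence preceding it simply records the additivity relations $\phi_{\text{irr}}^*\kappa_m=\kappa_m$, $\phi_{h,I}^*\kappa_m=\pi_1^*\kappa_m+\pi_2^*\kappa_m$ (from \cite{ACoCom}) and the analogous relations for $\mu_m=ch_m(\be)$ coming from the splitting of the Hodge bundle on the boundary, as the mechanism behind the Manin--Zograf forward direction; the converse is attributed without further comment to Teleman. Your forward direction is exactly this argument, spelled out, and your converse correctly identifies Teleman's semisimple classification as the real content, so your proposal is fully consistent with the paper's treatment.

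One small inaccuracy: you write $A\in H^{>0}$, but the sum begins at $i=0$ and $\kappa_0=2g-2+n\in H^0$, so $A$ has a constant term $s_0(2g-2+n)$. This does no harm---the factor $e^{s_0(2g-2+n)}$ is multiplicative under gluing because $2g-2+n$ is additive---but strictly speaking $\log\Omega_{g,n}$ need not lie in positive degree.
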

In particular, $\Omega_{g,n}^\bk=\bk$ and $\Omega_{g,n}^\bj=\bj$ define invertible CohFTs.

\subsection{Push-forward and pull-back properties}   \label{sec:prop}

The polynomials $K_m(\kappa_1,...,\kappa_m)$ and $J_m(\kappa_1,...,\kappa_m)$ satisfy push-forward and pull-back relations with respect to the forgetful map $\pi:\overline{\modm}_{g,n+1}\to\overline{\modm}_{g,n}$.
\begin{equation}  \label{pushf}
\pi_*J_{m+1}=(2g-2+n-m)J_{m},\quad
\pi_*K_{m+1}=(6g-6+3n-2m)K_{m}.
\end{equation}
\begin{equation}  \label{pullb}
\pi^*J_m=\sum_{i=0}^m(-1)^ii!\psi_{n+1}^iJ_{m-i},\qquad\pi^*K_m=\sum_{i=0}^m(-1)^i(2i+1)!!\psi_{n+1}^iK_{m-i}.
\end{equation}
The push-forward and pull-back relations are a consequence of a more general result.
For any $v=(a,b)\in\bc^2$, define
$$\bl^v=L^v_0+L^v_1t+L^v_2t^2+...=\exp(\sum\ell^v_j\kappa_jt^j)$$
for $\ell^v_j$ defined by \eqref{de} and \eqref{discon}.
\begin{deflem}  \label{reciprocal}
For any $v=(a,b)\in\bc^2$, define the sequence $\{\ell^v_j\mid j>0\}$ in either of the following two equivalent ways:
\begin{equation}  \label{de}
\exp\left(\sum\ell_j^vt^j\right)=1+at-bt^2\frac{d}{dt}\sum_{j>0}\ell_j^vt^j
\end{equation}
\begin{equation}  \label{discon}
\exp\left(-\sum\ell_j^vt^j\right)=\sum_{k=0}^\infty (-1)^kt^k\prod_{m=0}^{k-1}(a+bm).
\end{equation}
\end{deflem}
\begin{proof}
Define $f(t)$ by the right hand side of~\eqref{discon}. Then, shifting the summation index by~$1$, we get
\begin{multline*}
f(t)=1+\sum_{k=0}^\infty (-1)^{k+1}t^{k+1}\prod_{m=0}^{k}(a+mb)=\\
1-t\sum_{k=0}^\infty (a+b k)(-1)^{k}t^{k}\prod_{m=0}^{k-1}(a+mb)=
1-t(a+b\,t\frac{d}{dt})f(t).
\end{multline*}
The obtained differential equation is equivalent to~\eqref{de}.
\end{proof}
The classes $L^v_k$ for any $v=(a,b)\in\bc^2$ are proven below to satisfy push-forward relations using \eqref{de} and pull-back relations using \eqref{discon}.   These relations specialise to \eqref{pushf} and \eqref{pullb} since $\bl^{(1,1)}=\bj$ and $\bl^{(3,2)}=\bk$, i.e.
if we substitute $(a,b)=(1,1)$ or $(3,2)$ into \eqref{discon}, the right hand side becomes
$$\sum_{k=0}^\infty (-1)^kt^k\prod_{m=0}^{k-1}(1+m)=\sum_{k=0}^\infty(-1)^kk!t^k$$
and
$$\sum_{k=0}^\infty (-1)^kt^k\prod_{m=0}^{k-1}(3+2m)=\sum_{k=0}^\infty(-1)^k(2k+1)!!t^k
$$
and the relations \eqref{pushf} and \eqref{pullb} are immediate corollaries of Lemmas~\ref{pushfgen} and \ref{pullbgen}.  We have abused notation and identified $\bk(t)=K_0+K_1t+K_2t^2+...$ with $\bk=\bk(1)$ since $t$ simply keeps track of degree, and similarly with $\bj$.

\begin{lemma} \label{pushfgen}
For any $(a,b)\in\bc$
$$\pi_*L^v_{m+1}=a(2g-2+n)L^v_{m}-bmL^v_{m}.$$
\end{lemma}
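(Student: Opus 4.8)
The plan is to factor $\bl^v$ over $\overline{\modm}_{g,n+1}$ as a pullback from $\overline{\modm}_{g,n}$ times a $\psi_{n+1}$-correction, apply the projection formula, and then recognise the resulting correction as a degree operator using the defining relation \eqref{de}. First I would use the Arbarello--Cornalba comparison $\kappa_j=\pi^*\kappa_j+\psi_{n+1}^j$ between the $\kappa$ classes on $\overline{\modm}_{g,n+1}$ and their pullbacks from $\overline{\modm}_{g,n}$. Since pullback is a ring homomorphism, writing $G(t)=\sum_{j>0}\ell^v_j t^j$ this gives the factorisation
\[
\bl^v(t)=\pi^*\bl^v(t)\cdot\exp\Bigl(\sum_{j>0}\ell^v_j\psi_{n+1}^j t^j\Bigr)=\pi^*\bl^v(t)\cdot e^{G(\psi_{n+1}t)}
\]
on $\overline{\modm}_{g,n+1}$, where the left-hand $\bl^v(t)$ is computed upstairs. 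Applying the projection formula $\pi_*(\pi^*\alpha\cdot\beta)=\alpha\cdot\pi_*\beta$ then reduces the claim to computing $\pi_*\,e^{G(\psi_{n+1}t)}$ and multiplying by $\bl^v(t)$ downstairs.

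The crux is the following linearisation. The relation \eqref{de} reads $e^{G(t)}=1+at-bt^2G'(t)$, so substituting $t\mapsto\psi_{n+1}t$ turns the exponential into
\[
e^{G(\psi_{n+1}t)}=1+a\,\psi_{n+1}t-b\sum_{j>0}j\,\ell^v_j\,\psi_{n+1}^{j+1}t^{j+1}.
\]
This is precisely the step where the particular shape of \eqref{de} is essential: an a priori infinite exponential in $\psi_{n+1}$ collapses to a first-order expression whose push-forward can be taken term by term. Now I apply $\pi_*1=0$, the defining push-forward $\pi_*\psi_{n+1}^{k+1}=\kappa_k$, and $\kappa_0=2g-2+n$ to obtain
\[
\pi_*\,e^{G(\psi_{n+1}t)}=a(2g-2+n)\,t-b\sum_{j>0}j\,\ell^v_j\,\kappa_j\,t^{j+1}.
\]

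Finally I would multiply by $\bl^v(t)$ on $\overline{\modm}_{g,n}$ and read off the coefficient of $t^{m+1}$. The first term contributes $a(2g-2+n)L^v_m$, while the second contributes $-b\sum_{j>0}j\,\ell^v_j\,\kappa_j\,L^v_{m-j}$. To close the argument I invoke homogeneity: since $L^v_m$ is the degree-$m$ part of $\bl^v$, the Euler identity $t\frac{d}{dt}\bl^v(t)=\bigl(\sum_{j>0}j\,\ell^v_j\,\kappa_j t^j\bigr)\bl^v(t)$ has degree-$m$ component $mL^v_m=\sum_{j>0}j\,\ell^v_j\,\kappa_j\,L^v_{m-j}$, which collapses the second term to $-bm\,L^v_m$ and yields $\pi_*L^v_{m+1}=a(2g-2+n)L^v_m-bm\,L^v_m$. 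I expect the only genuine obstacle to be the linearisation step above, since recognising that \eqref{de} is exactly the relation that converts the $\psi_{n+1}$-correction into a first-order expression is what makes the push-forward reduce to degree counting; the remaining ingredients (projection formula, $\pi_*\psi_{n+1}^{k+1}=\kappa_k$, and homogeneity) are routine. This parallels the companion Lemma~\ref{pullbgen}, where the pull-back is instead governed by the series form \eqref{discon}.
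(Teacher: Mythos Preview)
Your proof is correct and follows essentially the same route as the paper: the factorisation via $\kappa_j=\pi^*\kappa_j+\psi_{n+1}^j$, the projection formula, the linearisation of $e^{G(\psi_{n+1}t)}$ using \eqref{de}, the push-forwards $\pi_*\psi_{n+1}^{k+1}=\kappa_k$, and the Euler/derivative identity $\sum kL^v_kt^k=\bl^v\cdot\sum j\ell^v_j\kappa_jt^j$ to collapse the second term are exactly the steps the paper uses. Your presentation is, if anything, slightly more explicit in naming $G$ and invoking the Euler identity.
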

\begin{proof}
Each $L^v_m$ is a degree $m$ polynomial in $\{\kappa_j\}$.  We can calculate its push-forward as follows:
\begin{align*}
\pi_*\bl^v
=&\pi_*\exp\sum \ell^v_i\kappa_it^i=\pi_*\exp\sum \ell^v_i(\pi^*\kappa_i+\psi^i_{n+1})t^i\\
=&\pi_*\left(\exp(\sum \ell^v_i\pi^*\kappa_it^i)\exp(\sum \ell^v_i\psi^i_{n+1}t^i)\right)\\
=&\pi_*\left(\pi^*(\exp(\sum \ell^v_i\kappa_it^i))\exp(\sum \ell^v_i\psi^i_{n+1}t^i)\right)\\
=&\bl^v\cdot\pi_*\exp\sum \ell^v_i\psi^i_{n+1}t^i\\
=&\bl^v\cdot\pi_*\left(1+a\psi_{n+1}t-b\sum_{j>0} j\ell^v_j\psi^{j+1}_{n+1}t^{j+1}\right)\\
=&\bl^v\cdot\left(a(2g-2+n)t-b\sum_{j>0} j\ell^v_j\kappa_jt^{j+1}\right)\\
=&\bl^v\cdot a(2g-2+n)t-b\sum kL^v_kt^{k+1}.
\end{align*}
The third last equality uses the differential equation \eqref{de} with $t\mapsto\psi_{n+1}t$.
The last equality is obtained via differentiation with respect to $t$ of $\bl^v=\sum L^v_kt^k=\exp(\sum\ell^v_j\kappa_jt^j)$ to get
$$ \sum kL^v_kt^{k-1}=\bl^v\cdot\sum_{j>0}j\ell^v_j\kappa_jt^{j-1}.
$$
The coefficient of $t^{m+1}$ in the expression for $\pi_*\bl^v$ yields $\pi_*L^v_{m+1}=a(2g-2+n)L^v_{m}-bmL^v_{m}$ as required.
\end{proof}
\begin{lemma} \label{pullbgen}
For any $(a,b)\in\bc$
$$\pi^*L^v_k=\sum_{i=1}^k(-1)^i\psi_{n+1}^iL^v_{k-i}\prod_{m=0}^{i-1}(a+mb).$$
\end{lemma}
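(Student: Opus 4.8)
The plan is to mirror the proof of Lemma~\ref{pushfgen}, replacing the push-forward decomposition of the $\kappa$ classes by the pull-back one. The single geometric input I need is the Arbarello--Cornalba comparison $\pi^*\kappa_i=\kappa_i-\psi_{n+1}^i$ for the forgetful map $\pi:\overline{\modm}_{g,n+1}\to\overline{\modm}_{g,n}$; this is the $a=0$ companion of the relation $\psi_{n+1}^a\pi^*(\kappa_m)=\psi_{n+1}^a(\kappa_m-\psi_{n+1}^m)$ already recorded in the proof of Theorem~\ref{KWshift}.

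First I would apply $\pi^*$ to the generating class $\bl^v=\exp(\sum\ell^v_i\kappa_it^i)$. Since $\pi^*$ is a ring homomorphism and all the classes involved have even degree and hence commute, this gives
$$\pi^*\bl^v=\exp\Bigl(\sum\ell^v_i(\kappa_i-\psi_{n+1}^i)t^i\Bigr)=\bl^v\cdot\exp\Bigl(-\sum\ell^v_i\psi_{n+1}^it^i\Bigr).$$
The key observation is that the resulting correction factor is exactly the left-hand side of the defining relation \eqref{discon}, evaluated at $\psi_{n+1}t$ in place of $t$.

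Next I would substitute $t\mapsto\psi_{n+1}t$ into \eqref{discon} to obtain the closed form
$$\exp\Bigl(-\sum\ell^v_i\psi_{n+1}^it^i\Bigr)=\sum_{i=0}^\infty(-1)^i\psi_{n+1}^it^i\prod_{m=0}^{i-1}(a+mb),$$
and then multiply this series by $\bl^v=\sum_jL^v_jt^j$. Extracting the coefficient of $t^k$ from the product yields
$$\pi^*L^v_k=\sum_{i=0}^k(-1)^i\psi_{n+1}^iL^v_{k-i}\prod_{m=0}^{i-1}(a+mb),$$
where the $i=0$ term (with empty product equal to $1$) reproduces $L^v_k$ itself. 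This is the asserted formula, and specialising $(a,b)=(1,1)$ and $(a,b)=(3,2)$ recovers the two pull-back relations in \eqref{pullb}.

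I do not expect a serious obstacle: the argument is purely formal once the $\kappa$ pull-back relation is in hand, and the role played here by \eqref{discon} is perfectly dual to the role played by the differential equation \eqref{de} in Lemma~\ref{pushfgen}. The only step requiring a little care is the splitting of the exponential, i.e. the factorisation of $\exp(\sum\ell^v_i(\kappa_i-\psi_{n+1}^i)t^i)$ as a product of two exponentials; this is immediate because $\kappa_i$ and $\psi_{n+1}$ are commuting even-degree cohomology classes, so the identity $\exp(A+B)=\exp(A)\exp(B)$ applies termwise in the formal variable $t$.
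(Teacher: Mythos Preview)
Your proposal is correct and follows essentially the same approach as the paper: apply $\pi^*$ to $\bl^v$, use $\pi^*\kappa_i=\kappa_i-\psi_{n+1}^i$ to split the exponential, invoke \eqref{discon} with $t\mapsto\psi_{n+1}t$, and read off the coefficient of $t^k$. Your version with the sum starting at $i=0$ is in fact the correct one, matching the specialisations in \eqref{pullb}; the lower index $i=1$ in the lemma's displayed formula (and in the paper's proof) is a typo.
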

\begin{proof}
\begin{align*}
\pi^*\bl^v
&=\pi^*\exp(\sum\ell^v_i\kappa_it^i)=\exp(\sum\ell^v_i\pi^*\kappa_it^i)=\exp(\sum \ell^v_i(\kappa_i-\psi_{n+1}^i)t^i\\
&=\bl^v\exp\left(-\sum\ell^v_i\psi_{n+1}^it^i)\right)\\
&=\bl^v\sum_{i=1}^\infty (-1)^i\psi_{n+1}^it^i\prod_{m=0}^{i-1}(a+mb)
\end{align*}
where the last equality uses \eqref{discon}.  The coefficient of $t^k$ yields the lemma.
\end{proof}
\begin{remark}
There is a combinatorial proof of Lemma~\ref{reciprocal} in the case of $v=(1,1)$ which considers connected and disconnected  permutations of $(1,...,n)$.
\end{remark}

\subsection{CohFTs without unit}
A one-dimensional CohFT without unit is a collection of classes $\Omega_{g,n}\in H^*(\overline{\modm}_{g,n},\bq)$ that satisfies \eqref{glue1} and \eqref{glue2}.  It may be that $\Omega_{0,3}=0$ as in the following examples.
\begin{proposition}  \label{equivconj}
The sequences $\Omega_{g,n}=K_{2g-2+n}$ and $\Omega'_{g,n}=J_{2g-2+n}$ are CohFTs without unit and $\Omega'_{g,n}=0$ for $n>1$ iff Conjecture~\ref{conj} holds.
\end{proposition}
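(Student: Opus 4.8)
The plan is to read the stated equivalence as: the conjunction ``$\Omega_{g,n}=K_{2g-2+n}$ and $\Omega'_{g,n}=J_{2g-2+n}$ are CohFTs without unit, and $\Omega'_{g,n}=0$ for $n>1$'' holds if and only if Conjecture~\ref{conj} holds, and to prove the two implications separately. First I would record the only structural input needed. For a one-dimensional CohFT without unit with $\eta(\un,\un)=1$ (so $\Delta=\un\otimes\un$), the axioms \eqref{glue1} and \eqref{glue2} read
\[
\phi_{\text{irr}}^*\Omega_{g,n}=\Omega_{g-1,n+2},\qquad
\phi_{h,I}^*\Omega_{g,n}=\pi_1^*\Omega_{h,|I|+1}\cdot\pi_2^*\Omega_{g-h,|J|+1}.
\]
By the restriction formulas $\phi_{\text{irr}}^*\kappa_m=\kappa_m$ and $\phi_{h,I}^*\kappa_m=\pi_1^*\kappa_m+\pi_2^*\kappa_m$ of \cite{ACoCom}, the full classes satisfy $\phi_{\text{irr}}^*\bk=\bk$ and $\phi_{h,I}^*\bk=\pi_1^*\bk\cdot\pi_2^*\bk$, and likewise for $\bj$. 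Taking the degree $d$ component gives $\phi_{\text{irr}}^*K_d=K_d$ and $\phi_{h,I}^*K_d=\sum_{a+b=d}\pi_1^*K_a\cdot\pi_2^*K_b$. Since $2(g-1)-2+(n+2)=2g-2+n$, axiom \eqref{glue1} holds for $\Omega_{g,n}=K_{2g-2+n}$ unconditionally; all the content sits in \eqref{glue2}, whose validity amounts to vanishing of the off-diagonal terms in the sum with $d=2g-2+n$.

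For the forward implication I assume Conjecture~\ref{conj}. The hypothesis $\Omega'_{g,n}=J_{2g-2+n}=0$ for $n>1$ is then exactly the second line of \eqref{Jrel}. For \eqref{glue2}, set $n_1=|I|+1$, $n_2=|J|+1$; the diagonal indices $a_0=2h-2+n_1$, $b_0=2(g-h)-2+n_2$ satisfy $a_0+b_0=2g-2+n$ and reproduce $\pi_1^*\Omega_{h,n_1}\cdot\pi_2^*\Omega_{g-h,n_2}$. In every other term either $a>a_0$ or $b>b_0$, and since $n_1,n_2\ge1$ the exceptional case $(3g-3,0)$ of \eqref{Krel} never occurs, so \eqref{Krel} forces $K_a=0$ (resp.\ $K_b=0$) and the term vanishes. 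The identical argument with \eqref{Jrel} handles $\bj$, proving that both $\Omega$ and $\Omega'$ are CohFTs without unit.

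The substance is the backward implication: assuming the gluing axioms and $\Omega'_{g,n}=0$ for $n>1$, recover all of \eqref{Krel} and \eqref{Jrel}. For $n\ge1$ and $m=2g-2+n+k$ with $k\ge1$, I would apply \eqref{glue2} at $\overline{\modm}_{g,n+k}$ to the genus-$0$ bubbling map $\phi_{0,I}\colon\overline{\modm}_{0,k+2}\times\overline{\modm}_{g,n}\to\overline{\modm}_{g,n+k}$. Its right-hand side carries the factor $\Omega_{0,k+2}=K_{k}\in H^{2k}(\overline{\modm}_{0,k+2})$, which vanishes because $\dim\overline{\modm}_{0,k+2}=k-1<k$; hence the axiom yields $\sum_{a=0}^{k-1}\pi_1^*K_a^{(0,k+2)}\cdot\pi_2^*K_{m-a}^{(g,n)}=0$. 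These summands lie in distinct Künneth components $H^{2a}\otimes H^{2(m-a)}$, so each vanishes, and the $a=0$ summand (where $K_0=1$) gives $K_m^{(g,n)}=0$, which is \eqref{Krel} for $n\ge1$. The same computation with $\Omega'_{0,k+2}=J_k^{(0,k+2)}=0$ gives \eqref{Jrel} for $n\ge1$, while the top-degree relations $J_{2g-2+n}=0$ for $n>1$ are precisely the standing hypothesis.

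It remains to treat $n=0$, where no genus-$0$ bubble can be attached. Here I would invoke the push-forward relations \eqref{pushf}: for $\pi\colon\overline{\modm}_{g,1}\to\overline{\modm}_g$ one has $\pi_*K_{m+1}=(6g-6-2m)K_m$ and $\pi_*J_{m+1}=(2g-2-m)J_m$. For $m>2g-2$ we have $m+1>2g-1=2g-2+1$, so $K_{m+1}$ and $J_{m+1}$ already vanish on $\overline{\modm}_{g,1}$ by the $n=1$ case just proved; provided the scalar is nonzero---that is $m\ne 3g-3$ for $K$ and $m\ne 2g-2$ (automatic, since $m>2g-2$) for $J$---this forces $K_m=J_m=0$ on $\overline{\modm}_g$, completing \eqref{Krel} and \eqref{Jrel}. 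The main obstacle is the backward direction, and the crucial realization that unlocks it is that the genus-$0$ top class $K_k$ (resp.\ $J_k$) vanishes on $\overline{\modm}_{0,k+2}$ purely by dimension: this collapses the right-hand side of \eqref{glue2} to zero, so that Künneth peels off $K_m$ (resp.\ $J_m$) from the $a=0$ term, with the $n=0$ case reduced formally to $n=1$ through \eqref{pushf}.
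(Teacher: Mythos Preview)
Your proof is correct. The forward direction and the $n=0$ reduction via the push-forward relations \eqref{pushf} match the paper exactly. In the backward direction for $n\ge1$ you take a slightly different route: the paper attaches a \emph{chain} of $k$ copies of $\modm_{0,3}$, so that pulling back $\bk$ to this codimension-$k$ stratum gives $K_m\otimes1^{\otimes k}$ outright (all positive-degree classes die on a point), while the CohFT axiom forces this to equal $\Omega_{g,n}\otimes\Omega_{0,3}^{\otimes k}=0$ since $\Omega_{0,3}=K_1=0$ for degree reasons. You instead attach a single genus-$0$ bubble $\overline{\modm}_{0,k+2}$ (codimension $1$), and then separate out the $a=0$ term $1\otimes K_m$ from the truncated sum by K\"unneth bigrading. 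The paper's choice avoids the K\"unneth step entirely; yours has the compensating virtue of invoking only the single-node axiom \eqref{glue2} directly, rather than its iterate along a deeper stable graph. The underlying mechanism---vanishing of the genus-$0$ top class $\Omega_{0,\bullet}$ for dimension reasons---is the same in both.
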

\begin{proof}
We will treat the case of $\Omega_{g,n}=K_{2g-2+n}$ first.  One direction is straightforward.  If
\begin{equation}  \label{vaneq}
K_m(\kappa_1,...,\kappa_m)=0\quad\text{for}\quad m>2g-2+n,\ n>0
\end{equation}
then for $\phi:\overline{\modm}_{g_1,n_1+1}\times\overline{\modm}_{g_2,n_2+1}\to\overline{\modm}_{g,n}$ where $g_1+g_2=g$, $n_1+n_2=n$ and $p_i$, $i=1,2$ is projection onto the $i$th factor
\begin{equation}  \label{conjres}
\phi^*K_{2g-2+n}=\sum_{i+j=2g-2+n}p_1^*K_i\cdot p_2^*K_j=p_1^*K_{2g_1-2+n_1+1}\cdot p_2^*K_{2g_2-2+n_2+1}.
\end{equation}
The first equality uses $\phi^*\exp(\sum s_i\kappa_i)=p_1^*\exp(\sum s_i\kappa_i)\cdot p_2^* \exp(\sum s_i\kappa_i)$.
For the second equality, $i+j=2g-2+n$ implies that for all but one summand in \eqref{conjres} either $i> 2g_1-2+n_1+1$, hence $K_i=0$ by \eqref{vaneq}, or $j> 2g_2-2+n_2+1$ in which case $K_j=0$ again by \eqref{vaneq}.  Also $\phi_{\text{irr}}^*\exp(\sum s_i\kappa_i)=\exp(\sum s_i\kappa_i)$ implies that $\phi_{\text{irr}}^*K_{2g-2+n}=K_{2(g-1)-2+n+2}$ and together with \eqref{conjres} we see that $\Omega_{g,n}=K_{2g-2+n}$ is a CohFT without unit.

For the other direction, suppose that $\Omega_{g,n}=K_{2g-2+n}$ is a CohFT without unit.  For  $k>0$ and $n>0$,  consider the following stable graph $\Gamma\in G_{g,n+k}$:
\begin{center}
\begin{tikzpicture}
\node(1) at (-1,0)        {$\Gamma=$};
\node(3) at (1,0)[shape=circle,draw,scale=.7]        {$g$};
\node(4) at (2,0)[shape=circle,draw,scale=.7]        {0};
\node(5) at (4,0)[shape=circle,draw,scale=.7]        {0};
\node(55) at (5,0)[shape=circle,draw,scale=.7]        {0};
\node(6) at (2.8,0) {};
\node(7) at (3.2,0) {};
\path [-] (3)    edge   node     {}     (4);
\path [-] (5)    edge   node     {}     (55);
\path [-] (4)      edge   node [above]  {}     (6);
\path [-] (7)      edge   node [above]  {}     (5);
\node(8) at (3,0)        {$\cdots$};

\node(10) at (2,.6) {};
\path [-] (10)    edge   node     {}     (4);
\node(100) at (4,.6) {};
\path [-] (100)    edge   node     {}     (5);
\node(11) at (5.5,.6) {};
\path [-] (11)    edge   node     {}     (55);
\node(12) at (5.5,-.6) {};
\path [-] (12)    edge   node     {}     (55);

\node(21) at (.3,.5) {};
\path [-] (21)    edge   node     {}     (3);
\node(22) at (.3,-.5) {};
\path [-] (22)    edge   node     {}     (3);
\node at (.5,.1) {$\vdots$};

\node(1) at (-.2,0)        {$_{n-1}$};
\node(2) at (3.45,-.55)        {$_k$};
\draw [decorate,
    decoration = {brace}](5.1,-.3)--(1.8,-.3);

\draw [decorate,
    decoration = {brace}](.2,-.4)--(.2,.4);
\end{tikzpicture}
\end{center}
which defines the stratum:
\[ \overline{\modm}_\Gamma=\overline{\modm}_{g,n}\times \overbrace{\modm_{0,3}\times...\times\modm_{0,3}}^{k\text{ factors}}\stackrel{\phi_\Gamma}{\longrightarrow}\overline{\modm}_{g,n+k}.
\]
For $m=2g-2+n+k>2g-2+n$,
\[0=\Omega_{g,n}\otimes\Omega_{0,3}^{\otimes k}= \phi_\Gamma^*\Omega_{g,n+k}=\phi_\Gamma^*K_m=K_m\otimes 1^{\otimes k} \]
where the first equality uses $\Omega_{0,3}=0$ which vanishes for degree reasons, the second equality uses the assumption that
$\Omega_{g,n}=K_{2g-2+n}$ is a CohFT without unit, and the final equality uses the degree $m$ part of the restriction
\[\phi_\Gamma^*\bk=\phi_\Gamma^*\exp(\sum s_i\kappa_i)=\exp(\sum s_i\kappa_i)\otimes \exp(\sum s_i\kappa_i)^{\otimes k}=\exp(\sum s_i\kappa_i)\otimes 1^{\otimes k}.\]
Hence $K_m=0$ for $m>2g-2+n$ and $n>0$.  When $n=0$, for $2g-2<m<3g-3$ from the push-forward formula \eqref{pushf} we have
\[ K_m=\frac{1}{6g-6-2m}\pi_*K_{m+1}=0\]
since we have shown that $K_{m+1}=0$ in $H^*(\overline{\modm}_{g,1},\bq)$.  Although $K_{3g-2}=0$ in $H^*(\overline{\modm}_{g,1},\bq)$, the push-forward formula \eqref{pushf} $\pi_*K_{3g-2}=0\times K_{3g-3}$ gives no information about $K_{3g-3}\in H^*(\overline{\modm}_{g},\bq)$, and in particular $K_{3g-3}$ can be non-vanishing.\\

For the case $\Omega_{g,n}=J_{2g-2+n}$ the argument is exactly the same as for $K_{2g-2+n}$ except for the push-forward formula \eqref{pushf}  in the case $n=0$,
\[ J_m=\frac{1}{2g-2-m}\pi_*J_{m+1}=0\]
which proves that $J_m=0$ in $H^*(\overline{\modm}_{g},\bq)$ for $m>2g-2$ (including $m=3g-3$).  Hence
$$J_m(\kappa_1,...,\kappa_m)=0\quad\text{for}\quad \left\{\begin{array}{ll}m>2g-2+n&\\
m=2g-2+n,&n>1.\end{array}\right.$$
The extra vanishing $J_{2g-2+n}=0$ for $n>1$ follows from the assumption $\Omega'_{g,n}=0$ for $n>1$.
\end{proof}

The push-forward relations \eqref{pushf} have an interpretation in terms of a shift of 1-dimensional CohFTs.  Given a 1-dimensional CohFT $\Omega_{g,n}\in H^*(\overline{\modm}_{g,n},\bq)$ define its shift depending on a parameter $\epsilon$ by:
$$\Omega_{g,n}(\epsilon)=\sum_{m\geq 0}  \pi_*^m\Omega_{g,n+m}\cdot\frac{ \epsilon^m}{m!}.
$$
This defines a CohFT for values $\epsilon$ where the sum converges.
\begin{lemma}
$$\bj(\epsilon):=\sum_{m\geq 0}  \pi_*^m \bj\cdot\frac{ \epsilon^m}{m!}=\sum d_k(\epsilon)J_kt^k,\quad d_k(\epsilon)=e^{-(k+\chi)\epsilon}
$$
$$\bk(\epsilon):=(1-\epsilon)^{2g-2+n}\sum_{m\geq 0}  \pi_*^m \bk\cdot\frac{ \epsilon^m}{m!}=\sum c_k(\epsilon)K_kt^k,\quad c_k(\epsilon)=(1-\epsilon)^{2(k+\chi)}.
$$
\end{lemma}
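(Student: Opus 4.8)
The plan is to prove both formulas by a direct computation, treating $t$ as a formal variable that records cohomological degree, so that the assertion becomes an identity for the coefficient of $t^k$ on each side. The only input needed is the push-forward relations \eqref{pushf}, or more conveniently the general form of Lemma~\ref{pushfgen}, together with the fact that $J_k$ and $K_k$ are universal classes, given by the same polynomial in the $\kappa$ classes over every $\overline{\modm}_{g,n}$. Since $\pi_*$ lowers cohomological degree by one (in the $t$-grading), the degree-$k$ part of $\pi_*^m\bj$ comes entirely from $\pi_*^m$ applied to the degree-$(k+m)$ component $J_{k+m}$ of $\bj$ over $\overline{\modm}_{g,n+m}$, and each single push-forward sends a multiple of $J_\ell$ to a multiple of $J_{\ell-1}$. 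So the first step is to compute the scalars $c_{k,m}$, $c'_{k,m}$ defined by $\pi_*^mJ_{k+m}=c_{k,m}J_k$ and $\pi_*^mK_{k+m}=c'_{k,m}K_k$ over $\overline{\modm}_{g,n}$.

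The heart of the matter is tracking how the scalar factor evolves along the tower $\overline{\modm}_{g,n+m}\to\overline{\modm}_{g,n+m-1}\to\cdots\to\overline{\modm}_{g,n}$. Applying Lemma~\ref{pushfgen} at the stage whose target is $\overline{\modm}_{g,n+u}$ and pushing forward the class of degree $k+u+1$, the contributed factor is $a(2g-2+n)-bk+(a-b)u$, where $u$ runs over $0,1,\dots,m-1$. The key point is that although both the number of marked points and the degree index change at each stage, they combine so that the factor only moves in an arithmetic progression of common difference $a-b$. For $\bj$ we have $(a,b)=(1,1)$, so the factor is the constant $2g-2+n-k$ and $c_{k,m}=(2g-2+n-k)^m$. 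For $\bk$ we have $(a,b)=(3,2)$, so the factors run through $A,A+1,\dots,A+m-1$ with $A=6g-6+3n-2k$, giving the rising factorial $c'_{k,m}=A(A+1)\cdots(A+m-1)$. I would verify this arithmetic-progression claim carefully, as it is the step most prone to off-by-one and marked-point bookkeeping errors.

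With the scalars in hand, assembling the generating functions is routine resummation. For $\bj$, the coefficient of $J_kt^k$ in $\sum_{m\ge0}\pi_*^m\bj\,\epsilon^m/m!$ is
\[
\sum_{m\ge0}\frac{(2g-2+n-k)^m}{m!}\,\epsilon^m=e^{(2g-2+n-k)\epsilon}=e^{-(k+\chi)\epsilon},
\]
with $\chi=2-2g-n$ the Euler characteristic, which is exactly $d_k(\epsilon)$. For $\bk$, the coefficient of $K_k$ in $\sum_{m\ge0}\pi_*^m\bk\,\epsilon^m/m!$ is
\[
\sum_{m\ge0}\frac{A(A+1)\cdots(A+m-1)}{m!}\,\epsilon^m=(1-\epsilon)^{-A}
\]
by the binomial expansion of $(1-\epsilon)^{-A}$. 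Multiplying by the normalizing prefactor $(1-\epsilon)^{2g-2+n}$ gives $(1-\epsilon)^{2g-2+n-A}$, and since $2g-2+n-A=2(k+\chi)$ this equals $c_k(\epsilon)=(1-\epsilon)^{2(k+\chi)}$, as claimed.

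The main obstacle, as indicated, is the iterated-push-forward bookkeeping that produces the constant power in the $\bj$ case and the rising factorial in the $\bk$ case; once that telescoping is pinned down the exponential and binomial resummations are immediate. I would also record that the $\epsilon$-series are to be read formally (or, for the $\bk$ resummation, as convergent for $|\epsilon|<1$), which is all that is needed for the identity of coefficients, and note that the role of the extra factor $(1-\epsilon)^{2g-2+n}$ in the definition of $\bk(\epsilon)$ is precisely to convert $(1-\epsilon)^{-A}$ into the symmetric exponent $2(k+\chi)$.
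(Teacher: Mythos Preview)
Your proof is correct and follows essentially the same approach as the paper: iterate the push-forward relation \eqref{pushf} to compute $\pi_*^m$ applied to $J_{k+m}$ and $K_{k+m}$, obtain a constant power $(2g-2+n-k)^m$ in the $\bj$ case and a rising/falling factorial in the $\bk$ case, then resum via the exponential and binomial series. Your use of Lemma~\ref{pushfgen} to exhibit the step factors as the arithmetic progression $a(2g-2+n)-bk+(a-b)u$ makes the bookkeeping especially transparent, but the substance is the same as the paper's proof.
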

The factor $(1-t)^{2g-2+n}$ is chosen in the definition of $\bk(\epsilon)$ for convenience.   Note that any factor of $\lambda^{2g-2+n}$ sends a CohFT to a CohFT.
\begin{proof}
Iterating \eqref{pushf}, we have
$$\pi_*^mJ_k=M^mJ_{k-m},\quad M=2g-2+n-k.
$$
Hence the coefficient of $J_kt^k$ in $\sum_{m\geq 0}  \pi_*^m \bj\cdot\frac{ \epsilon^m}{m!}$ is $e^{M\epsilon}=e^{-(k+\chi)\epsilon}$ as required.

For $K_k$, we iterate \eqref{pushf} to get
$$\pi_*^mK_k=M(M-1)...(M-m+1)K_{k-m},\quad M=3(2g-2+n)-2k-1.
$$
Hence the coefficient of $K_kt^k$ in $\sum_{m\geq 0}  \pi_*^m \bk\cdot\frac{ \epsilon^m}{m!}$ is $\displaystyle\sum_{k\geq 0}\binom{M+k}{k}\epsilon^k=(1-\epsilon)^{-M-1}$, for $|\epsilon|<1$.  When we multiply by $(1-t)^{2g-2+n}$, the coefficient of $K_kt^k$ in $\bk(\epsilon)$ is $(1-\epsilon)^{2(k+\chi)}$ as required.
\end{proof}
The shift is equivalent to rescaling $(a,b)$ in Definition~\ref{reciprocal}.  Then $\bj(\epsilon)$ corresponds to $(a,b)=(e^{-\epsilon},e^{-\epsilon})$ and $\bk(\epsilon)$ corresponds to $(a,b)=(3(1-\epsilon)^2,2(1-\epsilon)^2)$.  We see that $\bj(\epsilon)$ is defined for all $\epsilon\in\bc$ and $\bk(\epsilon)$ is defined for all $|\epsilon|<1$ and can be analytically continued to $\epsilon\in\bc-\{1\}$.  We can restate Conjecture~\ref{conj} using $\bj(\epsilon)$ and $\bk(\epsilon)$:
$$\text{Conjecture }\ref{conj}\Leftrightarrow\lim_{\epsilon\to-\infty}\bj(\epsilon)\text{ exists\quad and}\quad \lim_{\epsilon\to 1}\bk(\epsilon)\text{ exists}.
$$

\section{Geometric constructions}  \label{sec:geom}
In this section we give three geometric constructions which may give a way to prove the vanishing results involving the classes $\bj$ and $\bk$. 

\subsection{Hodge CohFT}  \label{hodge}
The Hodge bundle $\be_g\to\overline{\modm}_{g,n}$ satisfies natural restriction properties on the boundary:
\begin{equation}  \label{hodres}
\begin{array}{l}0\to\be_{g-1}\to\phi_{\text{irr}}^*\be_g\to\co_{\overline{\modm}_{g-1,n+2}}\to 0\\
\\
0\to\pi_1^*\be_{h}\oplus\pi_2^*\be_{g-h}\to \phi_{h,I}^*\be_g\to 0
\end{array}
\end{equation}
where $\pi_i$ is projection onto the $i$th factor of $\overline{\modm}_{h,|I|+1}\times\overline{\modm}_{g-h,|J|+1}$.  A consequence of the restriction properties is that the total Chern class $c(\be_g)$ of the Hodge bundle defines a CohFT.   The relation $\pi^*\be_g=\be_g$ implies that the CohFT has flat unit.

Similarly, the bundle $\be_g\oplus\be_g$ satisfies natural restriction properties and its total Chern class $c(\be_g)^2=c(\be_g\oplus\be_g)$ defines a CohFT.  Alternatively, the square of any CohFT satisfies the properties of a CohFT.  Define:
\[\Omega^{\text{Hod}2}_{g,n}:=c(\be)^2=(1+\lambda_1+...+\lambda_g)^2.
\]
Flatness of $\be\oplus\be^\vee$ implies $c(\be\oplus\be^\vee)=c(\be)c(\be^\vee)=1$ hence $\lambda_g^2=0$ and $\lambda_{g-1}^2=2\lambda_{g-2}\lambda_g$ so
\begin{align*}
\Omega^{\text{Hod}2}_{g,n}&=\lambda_g^2+2\lambda_{g-1}\lambda_g+\lambda_{g-1}^2+2\lambda_{g-2}\lambda_g+...\\
&=(-1)^{g+n}2^{2-n}\Lambda_{g,n}+\deg_{<2g-2+n}\text{ terms}
\end{align*}
where
\begin{equation} \label{Lambda}
\Lambda_{g,n}=\left\{\begin{array}{cc}(-1)^g\lambda_{g-2}\lambda_g&n=0\\(-1)^{g-1}\lambda_{g-1}\lambda_g&n=1\\0&n>1\end{array}\right.
\end{equation}

\begin{proposition}
The family of classes $\{\Lambda_{g,n}\}$ defines a CohFT without unit, i.e. they satisfy the pull-back properties \eqref{glue1} and \eqref{glue2}.
\end{proposition}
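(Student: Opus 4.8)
The plan is to verify the two gluing axioms \eqref{glue1} and \eqref{glue2} directly, treating $\{\Lambda_{g,n}\}$ as a one-dimensional collection (so $\Delta=1$), and feeding in the Hodge-bundle restriction sequences \eqref{hodres} together with the relation $\lambda_g^2=0$ recorded just above the statement. The non-separating axiom is essentially free. The first sequence in \eqref{hodres} gives $c(\phi_{\text{irr}}^*\be_g)=c(\be_{g-1})c(\co)=c(\be_{g-1})$, so $\phi_{\text{irr}}^*\lambda_i=c_i(\be_{g-1})$; since $\be_{g-1}$ has rank $g-1$, this forces $\phi_{\text{irr}}^*\lambda_g=0$. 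Every nonzero value of $\Lambda$, namely $\Lambda_{g,0}=(-1)^g\lambda_{g-2}\lambda_g$ and $\Lambda_{g,1}=(-1)^{g-1}\lambda_{g-1}\lambda_g$, contains a factor $\lambda_g$, so $\phi_{\text{irr}}^*\Lambda_{g,n}=0$; and the target $\Lambda_{g-1,n+2}$ vanishes because $n+2>1$. Thus both sides of \eqref{glue1} are zero.

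The content is in the separating axiom for $\phi_{h,I}:\overline{\modm}_{h,|I|+1}\times\overline{\modm}_{g-h,|J|+1}\to\overline{\modm}_{g,n}$. Writing $\lambda_i'=\pi_1^*\lambda_i$ and $\lambda_j''=\pi_2^*\lambda_j$, the splitting $\phi_{h,I}^*\be_g=\pi_1^*\be_h\oplus\pi_2^*\be_{g-h}$ gives $\phi_{h,I}^*\lambda_k=\sum_{i+j=k}\lambda_i'\lambda_j''$ with $i\le h$, $j\le g-h$ by the rank bounds; in particular the top class restricts to a single term $\phi_{h,I}^*\lambda_g=\lambda_h'\lambda_{g-h}''$. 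The key computation is to expand $\phi_{h,I}^*(\lambda_{g-2}\lambda_g)=\lambda_h'\lambda_{g-h}''\sum_{i+j=g-2}\lambda_i'\lambda_j''$ and apply the square relations $(\lambda_h')^2=0$ and $(\lambda_{g-h}'')^2=0$. These kill the $i=h$ terms and the $j=g-h$ terms; the remaining indices satisfy $i\le h-1$ and $j\le g-h-1$ with $i+j=g-2$, which forces the unique survivor $i=h-1$, $j=g-h-1$. Hence $\phi_{h,I}^*(\lambda_{g-2}\lambda_g)=\lambda_{h-1}'\lambda_h'\lambda_{g-h-1}''\lambda_{g-h}''$.

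I would then finish by a case analysis on $n$. For $n=0$ both factors carry one node and no marked point, so the right-hand side is $(-1)^{h-1}\lambda_{h-1}'\lambda_h'\cdot(-1)^{g-h-1}\lambda_{g-h-1}''\lambda_{g-h}''$, and the sign check $(-1)^{h-1}(-1)^{g-h-1}=(-1)^{g-2}=(-1)^g$ matches the prefactor of $\Lambda_{g,0}$; stability forces $1\le h\le g-1$ so all the Chern classes involved are meaningful. For $n=1$ the marked point lands on one factor, giving it two special points, so that factor's $\Lambda$ vanishes and the right-hand side is zero; correspondingly the left-hand side is $(-1)^{g-1}\phi_{h,I}^*(\lambda_{g-1}\lambda_g)$, and the same square-relation argument with $i+j=g-1>g-2\ge (h-1)+(g-h-1)$ shows every term dies, so it too vanishes. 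For $n>1$ we have $\Lambda_{g,n}=0$ and at least one factor carries $\ge 2$ special points, so both sides vanish. The only genuine obstacle is the bookkeeping in the $n=0$ case: isolating the single surviving term and confirming that the signs in \eqref{Lambda} are exactly the ones dictated by multiplicativity; everything else is forced by $\lambda_g^2=0$ and the rank constraints.
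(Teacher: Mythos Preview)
Your proof is correct and follows essentially the same approach as the paper: both use the Hodge-bundle restriction formulae, the rank bounds, and the relation $\lambda_g^2=0$ to reduce the pullback of $\lambda_{g-1}\lambda_g$ and $\lambda_{g-2}\lambda_g$ to the required tensor products, with the same sign check. The only small point worth tightening is your $n=1$ separating case: the ``same square-relation argument'' you invoke relies on $(\lambda_h')^2=0$ and $(\lambda_{g-h}'')^2=0$, which needs $h\ge 1$ and $g-h\ge 1$; you should note (as you did for $n=0$) that stability on $\overline{\modm}_{g,1}$ forces this, since a genus-zero factor would need at least three special points but can carry at most two.
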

\begin{proof}
We can prove this via the relation with the degree $2g-2+n$ terms of $\Omega^{\text{Hod}2}_{g,n}$, or directly as follows.
The Hodge classes satisfy the restriction property:
\[ \phi_{\text{irr}}^*\lambda_k=\lambda_k,\quad\phi_{h,I}^*\lambda_k=\sum_{a+b=k}\lambda_a\otimes\lambda_b.
\]
Thus $\phi_{\text{irr}}^*\lambda_g=0$ since $\lambda_g=0$ in $H^*(\overline{\modm}_{g-1,n+2},\bq)$ and $\phi_{h,I}^*\lambda_g=\lambda_{h}\otimes\lambda_{g-h}$ again since higher degree $\lambda_k=0$ for $k>h$ in the first factor and $k>g-h$ in the second factor.  The same argument gives $\phi_{h,I}^*\lambda_{g-1}=\lambda_{h}\otimes\lambda_{g-h-1}+\lambda_{h-1}\otimes\lambda_{g-h}$.  Thus
\[\phi_{\text{irr}}^*\lambda_g\lambda_{g-1}=0,\quad \phi_{h,I}^*\lambda_g\lambda_{g-1}=\lambda_{h}^2\otimes\lambda_{g-h}\lambda_{g-h-1}+\lambda_{h}\lambda_{h-1}\otimes\lambda_{g-h}^2.
\]
We have $c(\be\oplus\be^\vee)=1$ for $\be$ the Hodge bundle and $\be^\vee$ its dual, so in particular $\lambda_g^2=0$ for $g>0$.  Thus  $\phi_{h,I}^*\lambda_g\lambda_{g-1}=0$ for $h(g-h)>0$  leaving only $\phi_{0,I}^*\lambda_g\lambda_{g-1}=1\otimes\lambda_{g}\lambda_{g-1}$.  On $\overline{\modm}_{g,1}$, all boundary divisors have positive genus irreducible components, so we conclude that the pullback of $\lambda_g\lambda_{g-1}\in H^*(\overline{\modm}_{g,1},\bq)$ to the boundary is 0.  Hence we have proven that
\[\phi^*_\Gamma\Lambda_{g,1}=0\]
as required since any boundary divisor $\overline{\modm}_\Gamma\subset\overline{\modm}_{g,1}$ involves a component with $n>1$.
\\

For the pull-back $\phi^*_\Gamma\Lambda_{g,0}$ we have $\phi_{\text{irr}}^*\lambda_{g}=0$ hence $\phi_{\text{irr}}^*\Lambda_{g}=0$ as required on $H^*(\overline{\modm}_{g-1,2},\bq)$.  Finally,
\begin{align*}
\phi_{h,I}^*\lambda_{g-2}\lambda_{g}&=\lambda_{h-2}\lambda_{h}\otimes\lambda_{g-h}^2+\lambda_{h-1}\lambda_{h}\otimes\lambda_{g-1}\lambda_{g}+\lambda_{h}^2\otimes\lambda_{g-2}\lambda_{g}\\
&=\lambda_{h-1}\lambda_{h}\otimes\lambda_{g-h-1}\lambda_{g-h}
\end{align*}
and $(-1)^{h-1}(-1)^{g-h-1}=(-1)^g$ shows that the signs agree yielding
\[\phi_{h,I}^*\Lambda_{g}=\Lambda_{h,1}\otimes \Lambda_{g-h,1}.
\]
\end{proof}

The interest in $\Lambda_{g,n}$ is due to the equality checked via admcycles
\[ J_{2g-2+n}=\Lambda_{g,n},\quad 2g-2+n\leq 6.
\]
We expect this equality to hold for all $g$ and $n$ in which case Proposition~\ref{equivconj} would apply to conclude \eqref{Jrel} in Conjecture~\ref{conj}.
\begin{proposition}  \label{JHodge}
If
\begin{equation}  \label{assumption}
J_{2g-2+n}=\left\{\begin{array}{cc}(-1)^{g}\lambda_{g-2}\lambda_g&n=0\\(-1)^{g-1}\lambda_{g-1}\lambda_g&n=1\end{array}\right.
\end{equation}
then
$$J_m(\kappa_1,...,\kappa_m)=0\quad\text{for}\quad \left\{\begin{array}{ll}m>2g-2+n&\\
m=2g-2+n,&n>1.\end{array}\right.
$$
\end{proposition}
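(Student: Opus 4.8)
The plan is to deduce the statement from Proposition~\ref{equivconj} by upgrading \eqref{assumption} to the identity $J_{2g-2+n}=\Lambda_{g,n}$ for \emph{all} pairs $(g,n)$, with $\Lambda_{g,n}$ as in \eqref{Lambda}. Granting this, the sequence $\{J_{2g-2+n}\}$ coincides with $\{\Lambda_{g,n}\}$, which the previous proposition exhibits as a CohFT without unit and which vanishes for $n>1$; the reverse implication of Proposition~\ref{equivconj} then returns exactly \eqref{Jrel}, the $n=0$ cases being produced there from the push-forward relation \eqref{pushf}. Since \eqref{assumption} is the equality $J_{2g-2+n}=\Lambda_{g,n}$ in the two seed cases $n=0,1$, the entire problem reduces to propagating this equality to $n\ge2$.

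I would run an induction on $(g,n)$ in lexicographic order, genus first, carrying the full statement $R(g,n)$: the top equality $J_{2g-2+n}=\Lambda_{g,n}$ together with the above-top vanishing $J_m=0$ for $m>2g-2+n$. The base is immediate: in genus $0$ one has $2g-2+n=n-2>n-3=\dim\overline{\modm}_{0,n}$, so all relevant $J_m$ vanish for dimension reasons and agree with $\Lambda_{0,n}=0$, while $n\le1$ is \eqref{assumption}. For the step at $(g,n)$ with $n\ge2$ put $D=J_{2g-2+n}-\Lambda_{g,n}$ and compute its restriction to each boundary divisor. From $\phi_{\text{irr}}^*\kappa_i=\kappa_i$ and $\phi_{h,I}^*\kappa_i=\kappa_i\otimes1+1\otimes\kappa_i$ one has $\phi^*\bj=\bigotimes_v\bj$, so $\phi^*J_{2g-2+n}$ is the degree-$(2g-2+n)$ part of a product over components. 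By the inductive hypothesis—lower genus across $\phi_{\text{irr}}$ and across $\phi_{h,I}$ with $0<h<g$, and strictly fewer points on the nontrivial factor across a genus-$0$ bubble, whose genus-$0$ factor is killed by dimension—each such product either collapses to the single top-by-top product, which the CohFT-without-unit restriction formula for $\Lambda$ reproduces, or vanishes outright when a genus-$0$ factor forces the companion factor above its top degree. In every case $\phi^*J_{2g-2+n}=\phi^*\Lambda_{g,n}$, hence $\phi^*D=0$ on all of $\partial\overline{\modm}_{g,n}$.

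The hard part is concluding $D=0$ from the vanishing of all its boundary restrictions. This is exactly where $n\ge2$ is indispensable: at $n\le1$ the boundary-trivial classes $\lambda_{g-2}\lambda_g$ and $\lambda_{g-1}\lambda_g$ of degree $2g-2+n$ are nonzero, and \eqref{assumption} is precisely the input that identifies them with $J_{2g-2+n}$, something no boundary argument can supply. For $n\ge2$ one must show that the space of tautological classes of degree $2g-2+n$ restricting to zero on every boundary divisor is zero. Because $D$ is boundary-trivial it pairs trivially with every tautological class pushed forward from a proper stratum, so by Poincar\'e--Lefschetz duality its vanishing is detected solely by the pairings $\int_{\overline{\modm}_{g,n}}J_{2g-2+n}\cdot Q$ with $Q$ a polynomial in $\psi$ and $\kappa$ classes of complementary degree $g-1$. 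I would obtain these not from the (still conjectural) regularity of the $\bj$-correlators of Section~\ref{Jweak}, but by using the relations \eqref{pushf} and \eqref{pullb} to reduce every such integral with $n\ge2$ to the single evaluation on $\overline{\modm}_{g,1}$, which \eqref{assumption} together with \cite{FPaLog} computes explicitly. The above-top half of $R(g,n)$ is handled identically: its boundary restrictions vanish by induction, and its complementary pairings now fall in degree $<g-1$, a range where the vanishing is more accessible. I expect this reduction of the $n\ge2$ pairings to the genus-one-pointed Hodge integral, uniformly in $g$, to be the genuine technical obstacle.
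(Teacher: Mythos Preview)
Your overall strategy---upgrade \eqref{assumption} to $J_{2g-2+n}=\Lambda_{g,n}$ for all $(g,n)$ and then invoke Proposition~\ref{equivconj}---matches the paper's. But your mechanism for the upgrade is much harder than necessary and contains a real gap you yourself flag: the step ``$D$ is boundary-trivial, hence $D=0$'' is not justified. You propose to close it by reducing all degree-$(g-1)$ pairings on $\overline{\modm}_{g,n}$ (for $n\ge 2$) to the single integral on $\overline{\modm}_{g,1}$ via \eqref{pushf} and \eqref{pullb}, but you do not carry this out, and you are right to call it a genuine obstacle. Even granting perfect pairing on the tautological ring, this would at best give a weak vanishing, not the actual equality of classes that Proposition~\ref{equivconj} requires.

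The paper avoids this entirely by running the restriction argument \emph{in the opposite direction}. Rather than restricting from $\overline{\modm}_{g,n}$ down to its boundary, realise $\overline{\modm}_{g,n}$ itself as a boundary stratum of a moduli space with $n'\le 1$ marked points, where \eqref{assumption} applies directly. Concretely, for $n=2m$ even take the single-vertex graph with $m$ loops, giving $\phi_\Gamma:\overline{\modm}_{g,n}\to\overline{\modm}_{g+m}$; for $n=2m+1$ odd leave one point free and map to $\overline{\modm}_{g+m,1}$. Since $\phi_{\text{irr}}^*\kappa_i=\kappa_i$, one has $\phi_\Gamma^*\bj=\bj$ and hence $J_{2g-2+n}=\phi_\Gamma^*J_{2(g+m)-2+n'}$ on the nose. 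By \eqref{assumption} the right-hand side is a multiple of $\phi_\Gamma^*\lambda_{g+m}$, which vanishes on $\overline{\modm}_{g,n}$ because the Hodge bundle there has rank $g<g+m$. This gives $J_{2g-2+n}=0$ for $n\ge 2$ in one line, with no induction and no appeal to pairing arguments; Proposition~\ref{equivconj} then finishes.
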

\begin{proof}
For $\Lambda_{g,n}$ defined in \eqref{Lambda}, the more general equality
\[ J_{2g-2+n}=\Lambda_{g,n},\quad 2g-2+n>0
\]
follows from the initial cases of $n=0,1$ given by \eqref{assumption}.  Equivalently, \eqref{assumption} implies that $J_{2g-2+n}=0$ for $n>1$, which we now show.  For even $n=2m>0$ consider the stable graph $\Gamma\in G_{g+m}$ with a single vertex and $m$ loops which defines the map
\[\phi_\Gamma:\overline{\modm}_{g,n}\to\overline{\modm}_{g+m}
\]
by identifying the labeled points $p_i$, say $p_1\sim p_2$, $p_3\sim p_4$, ..., $p_{n-1}\sim p_{n}$ to produce a stable curve of genus $g+m$. Then in $H^*(\overline{\modm}_{g,n},\bq)$
\[ J_{2g-2+n}=\phi_\Gamma^*J_{2(g+m)-2}=\phi_\Gamma^*(-1)^{g+m}\lambda_{g+m-2}\lambda_{g+m}=0
\]
where the first equality uses the degree $2g-2+n$ part of the restriction
\[\phi^*\bj=\phi_\Gamma^*\exp(\sum \sigma_i\kappa_i)=\exp(\sum \sigma_i\kappa_i)=\bj,\]
the second equality uses  $J_{2(g+m)-2}=(-1)^{g+m}\lambda_{g+m-2}\lambda_{g+m}$ which is the assumption \eqref{assumption}, and the third equality uses $0=\phi_\Gamma^*\lambda_{g+m}\in H^*(\overline{\modm}_{g,n},\bq)$ since $g+m>g$.

For odd $n=2m+1>1$ the argument is similar.  The stable graph $\Gamma\in G_{g+m,1}$ with a single vertex, $m$ loops and a leaf defines
\[\phi_\Gamma:\overline{\modm}_{g,n}\to\overline{\modm}_{g+m,1}
\]
by identifying the labeled points $p_1\sim p_2$, $p_3\sim p_4$, ..., $p_{n-2}\sim p_{n-1}$.  Then in $H^*(\overline{\modm}_{g,n},\bq)$
\[ J_{2g-2+n}=\phi_\Gamma^*J_{2(g+m)-1}=\phi_\Gamma^*(-1)^{g+m-1}\lambda_{g+m-1}\lambda_{g+m}=0
\]
as above.

Thus, the assumption \eqref{assumption} implies that $J_{2g-2+n}=\Lambda_{g,n}$ is a CohFT without unit and $J_{2g-2+n}=0$ for $n>1$.  Hence Proposition~\ref{equivconj} applies to deduce the vanishing of $J$ polynomials in the range given by  Conjecture~\ref{conj}.
\end{proof}

\begin{proposition}  \label{Kpsi}
For $g>1$ and $n>0$, if
\begin{equation}  \label{Kassumption}
K_{2g-2+n}=\left(\prod_{i=1}^n\psi_i\right)\pi^*K_{2g-2}
\end{equation}
then
$K_m(\kappa_1,...,\kappa_m)=0$ for $m>2g-2+n$ except $(m,n)=(3g-3,0)$.
\end{proposition}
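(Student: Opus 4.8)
The plan is to run the argument of Proposition~\ref{equivconj} that derives the vanishing \eqref{Krel} from the CohFT-without-unit property, but to supply the vanishing of the relevant boundary restriction directly from the hypothesis \eqref{Kassumption} in place of the (not yet available) separating-gluing axiom for $K_{2g-2+n}$. The key observation is that the eigenfunction form \eqref{Kassumption} carries the factor $\prod_{i=1}^n\psi_i$, and such a product restricts to $0$ on any stratum that places one of the marked points on a three-pointed genus-zero component, because $\overline{\modm}_{0,3}$ is a point and all its $\psi$-classes vanish.

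Concretely, fix $m>2g-2+n$ with $n>0$ and set $k=m-(2g-2+n)\ge 1$. I would use the stable graph $\Gamma$ of Proposition~\ref{equivconj}, with a single genus-$g$ vertex joined through a chain of $k$ trivalent genus-zero vertices, giving
\[
\phi_\Gamma:\overline{\modm}_{g,n}\times\big(\modm_{0,3}\big)^k\longrightarrow\overline{\modm}_{g,n+k},\qquad m=2g-2+(n+k),
\]
and compute $\phi_\Gamma^*K_m$ in two ways. Additivity of $\kappa$-classes under gluing gives $\phi_\Gamma^*\bk=\bk\otimes 1^{\otimes k}$, whose degree-$m$ part is $K_m\otimes 1^{\otimes k}$; under the identification of the stratum with $\overline{\modm}_{g,n}$ this is $K_m$ itself. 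On the other hand \eqref{Kassumption} on $\overline{\modm}_{g,n+k}$ writes $K_m=\big(\prod_{i=1}^{n+k}\psi_i\big)\pi^*K_{2g-2}$, and since at least one marked point sits on a $\overline{\modm}_{0,3}$ bubble the corresponding $\psi$ restricts to $0$, so $\phi_\Gamma^*K_m=0$. Comparing the two evaluations gives $K_m=0$ on $\overline{\modm}_{g,n}$, settling every $m>2g-2+n$ with $n>0$.

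For $n=0$ I would descend exactly as in Proposition~\ref{equivconj} using the forgetful push-forward \eqref{pushf}: whenever $2g-2<m<3g-3$ the coefficient $6g-6-2m$ is nonzero, so $(6g-6-2m)K_m=\pi_*K_{m+1}=0$, the last equality holding because $K_{m+1}$ already vanishes on $\overline{\modm}_{g,1}$ by the previous step. The remaining value $m=3g-3$ is genuinely excluded, since there the push-forward coefficient vanishes and yields no information; this is the exceptional pair $(m,n)=(3g-3,0)$.

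The conceptual crux, and the single place where the hypothesis enters, is the two-way evaluation of $\phi_\Gamma^*K_m$: the $\kappa$-factorization must genuinely identify the restriction with $K_m$ on the genus-$g$ vertex, while the $\psi$-product genuinely vanishes on a genus-zero bubble. This requires \eqref{Kassumption} not merely for the given pair but for all $\overline{\modm}_{g,n+k}$, $k\ge 1$ (that is, the full eigenfunction statement of Conjecture~\ref{eigenfunction} at fixed $g$), so that the form $\big(\prod\psi_i\big)\pi^*K_{2g-2}$ is available on the enlarged moduli spaces. The payoff of computing $\phi_\Gamma^*K_m$ directly is that no induction on $g$ or $n$ is needed; the main obstacle one must consciously avoid is the temptation to verify the separating-gluing axiom \eqref{glue2} for $K_{2g-2+n}$ head-on, which is circular because it presupposes exactly the vanishing being proved on the smaller factors.
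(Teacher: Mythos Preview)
Your proposal is correct and follows essentially the same approach as the paper: both use the stable graph $\Gamma$ attaching a chain of $k$ genus-zero bubbles to a genus-$g$ vertex, evaluate $\phi_\Gamma^*K_{2g-2+n+k}$ two ways (via $\kappa$-additivity and via the $\psi$-product in \eqref{Kassumption}) to kill $K_m$ for $n>0$, and then descend to $n=0$ via the push-forward \eqref{pushf}. Your explicit remark that \eqref{Kassumption} is needed on all $\overline{\modm}_{g,n+k}$, not just the fixed $(g,n)$, is a useful clarification that the paper leaves implicit in the quantification ``for $g>1$ and $n>0$''.
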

\begin{proof}
The proof is similar to the proof of Proposition~\ref{equivconj}.  For  $k>0$ and $n>0$,  consider the stable graph $\Gamma\in G_{g,n+k}$ given by
\begin{center}
\begin{tikzpicture}
\node(1) at (-1,0)        {$\Gamma=$};
\node(3) at (1,0)[shape=circle,draw,scale=.7]        {$g$};
\node(4) at (2,0)[shape=circle,draw,scale=.7]        {0};
\node(5) at (4,0)[shape=circle,draw,scale=.7]        {0};
\node(55) at (5,0)[shape=circle,draw,scale=.7]        {0};
\node(6) at (2.8,0) {};
\node(7) at (3.2,0) {};
\path [-] (3)    edge   node     {}     (4);
\path [-] (5)    edge   node     {}     (55);
\path [-] (4)      edge   node [above]  {}     (6);
\path [-] (7)      edge   node [above]  {}     (5);
\node(8) at (3,0)        {$\cdots$};

\node(10) at (2,.6) {};
\path [-] (10)    edge   node     {}     (4);
\node(100) at (4,.6) {};
\path [-] (100)    edge   node     {}     (5);
\node(11) at (5.5,.6) {};
\path [-] (11)    edge   node     {}     (55);
\node(12) at (5.5,-.6) {};
\path [-] (12)    edge   node     {}     (55);

\node(21) at (.3,.5) {};
\path [-] (21)    edge   node     {}     (3);
\node(22) at (.3,-.5) {};
\path [-] (22)    edge   node     {}     (3);
\node at (.5,.1) {$\vdots$};

\node(1) at (-.2,0)        {$_{n-1}$};
\node(2) at (3.45,-.55)        {$_k$};
\draw [decorate,
    decoration = {brace}](5.1,-.3)--(1.8,-.3);

\draw [decorate,
    decoration = {brace}](.2,-.4)--(.2,.4);
\end{tikzpicture}
\end{center}
and its associated stratum:
\[ \overline{\modm}_\Gamma=\overline{\modm}_{g,n}\times \overbrace{\modm_{0,3}\times...\times\modm_{0,3}}^{k\text{ factors}}\stackrel{\phi_\Gamma}{\longrightarrow}\overline{\modm}_{g,n+k}.
\]
Then
\begin{equation}  \label{van}
0= \phi_\Gamma^*K_{2g-2+n+k}=K_{2g-2+n+k}\otimes 1^{\otimes k}
\end{equation}
where the first equality uses the assumption $K_{2g-2+n+k}=\left(\prod_{i=1}^{n+k}\psi_i\right)\pi^*K_{2g-2}$, so in particular there is a $\psi_i$ which vanishes on the corresponding factor of $\modm_{0,3}$.  The second equality uses the degree $2g-2+n+k$ part of the restriction
\[\phi_\Gamma^*\bk=\phi_\Gamma^*\exp(\sum s_i\kappa_i)=\exp(\sum s_i\kappa_i)\otimes \exp(\sum s_i\kappa_i)^{\otimes k}=\exp(\sum s_i\kappa_i)\otimes 1^{\otimes k}.\]
Hence $K_m=0$ for $m>2g-2+n$ and $n>0$.  The $n=0$ cases uses the $n=1$ case and the push-forward formula when $2g-2<m<3g-3$
\[ K_m=\frac{1}{6g-6-2m}\pi_*K_{m+1}=0.\]
\end{proof}
\begin{remark}  \label{Kweak}
By essentially the same proof, a weak version of Proposition~\ref{Kpsi} also holds.  If
\begin{equation}  \label{pullbweak}
\int_{\overline{\modm}_{g,n}}\Big(K_{2g-2+n}-\Big(\prod_{i=1}^n\psi_i\Big)\pi^*K_{2g-2}\Big)\cdot\omega=0
\end{equation}
for all tautological classes $\omega\in RH^*(\overline{\modm}_{g,n})$ then  $\int_{\overline{\modm}_{g,n}}K_m\cdot\omega=0$ for all $\omega\in RH^*(\overline{\modm}_{g,n})$ and $m>2g-2+n$ except $(m,n)=(3g-3,0)$.  In the proof, simply take the intersection of \eqref{van} with $(\phi_\Gamma)_*(\omega\otimes 1^{\otimes k})$ for any tautological class $\omega\in RH^*(\overline{\modm}_{g,n})$ .
\end{remark}

\subsection{Spin CohFT}   \label{spincohft}
We define here a two dimensional CohFT arising out of the total Chern class of a natural bundle defined over the moduli space of spin curves, analogous to the Hodge bundle and its related CohFT.  Define the moduli space of stable twisted spin curves \cite{AJaMod} by
$$\overline{\modm}_{g,n}^{\rm spin}=\{(\cc,\theta,p_1,...,p_n,\phi)\mid \phi:\theta^2\stackrel{\cong}{\longrightarrow}\omega_{\cc}^{\text{log}}\}.
$$
The pair $(\theta,\phi)$ is a {\em spin structure} on $\cc$ where $\omega_{\cc}^{\text{log}}$ and $\theta$ are line bundles over the stable twisted curve $\cc$ with labeled orbifold points $p_j$ and $\deg\theta=g-1+\frac12n$.     A line bundle $L$ over a twisted curve $\cc$ is a locally equivariant bundle over the local charts, such that at each nodal point there is an equivariant isomorphism of fibres.  Hence each orbifold point $p$ associates a representation of $\bz_2$ on $L|_p$ acting by multiplication by $\exp(\pi i\sigma_p)$ for $\sigma_p\in\{0,1\}$.  The equivariant isomorphism of fibres over nodal points forces a balanced condition at each node $p$ given by $\sigma_{p_+}=\sigma_{p_-}$ for $p_\pm$ corresponding to the node on each irreducible component.

Define a vector bundle over $\overline{\modm}_{g,n}^{\rm spin}$ using the dual bundle $\theta^{\vee}$ on each stable twisted curve as follows.  Denote by $\ce$ the universal spin structure on the universal stable twisted spin curve over $\overline{\modm}_{g,n}^{\rm spin}$.  Given a map $S\to\overline{\modm}_{g,n}^{\rm spin}$, $\ce$ pulls back to $\theta$ giving a family $(\cc,\theta,p_1,...,p_n,\phi)$ where $\pi:\cc\to S$ has stable twisted curve fibres, $p_i:S\to\cc$ are sections with orbifold isotropy $\bz_2$ and $\phi:\theta^2\stackrel{\cong}{\longrightarrow}\omega_{\cc/S}^{\text{log}}=\omega_{\cc/S}(p_1,..,p_n)$.  The derived push-forward sheaf of $\ce^{\vee}$ over $\overline{\modm}_{g,n}^{\rm spin}$ gives rise to a bundle as follows.  The space of global sections of $\pi_*\ce^{\vee}$ vanishes, i.e. $R^0\pi_*\ce^{\vee}=0$, since
\[\deg\theta^{\vee}=1-g-\frac12n<0
\]
and furthermore, for any irreducible component $\cc'\stackrel{i}{\to}\cc$,
\[\deg\theta^{\vee}|_{\cc'}<0
\]
since  $i^*\omega_{\cc/S}^{\text{log}}=\omega_{\cc'/S}^{\text{log}}$ and $\cc'$ is stable  guarantees its log canonical bundle has negative degree.  Thus $R^1\pi_*\ce^\vee$ defines the following bundle.
\begin{definition}  \label{obsbun}
Define a bundle $E_{g,n}=-R\pi_*\ce^\vee$ over $\overline{\modm}_{g,n}^{\rm spin}$ with fibre $H^1(\theta^{\vee})$.
\end{definition}

The moduli space of stable twisted spin curves decomposes into components labeled by $\vec{\sigma}\in\{0,1\}^n$ satisfying $\displaystyle|\vec{\sigma}|+n\in 2\bz$:
$$\overline{\modm}_{g,n}^{\rm spin}=\bigsqcup_{\vec{\sigma}}\overline{\modm}_{g,n,\vec{\sigma}}^{\rm spin}
$$
where $\overline{\modm}_{g,n,\vec{\sigma}}^{\rm spin}$ consists of spin curves with induced representation at labeled points determined by $\vec{\sigma}$ as follows.
 The representations induced by $\theta$ at the labeled points produce a vector $\vec{\sigma}=(\sigma_1,...,\sigma_n)\in\{0,1\}^n$ which determine the connected component.  The number of $p_i$ with $\lambda_{p_i}=0$ is even due to evenness of the degree of the push-forward sheaf $|\theta|:=\rho_*\co_\cc(\theta)$ on the coarse curve $C$, \cite{JKVMod}.  In the smooth case, the
boundary type of a spin structure is determined by an associated quadratic form applied to each of the $n$ boundary classes which vanishes since it is a homological invariant, again implying that the number of $p_i$ with $\lambda_{p_i}=0$ is even.
Each component $\overline{\modm}_{g,n,\vec{\sigma}}^{\rm spin}$ is connected except when $|\vec{\sigma}|=n$, in which case there are two connected components determined by their Arf invariant, and known as even and odd spin structures.  This follows from the case of smooth spin curves proven in \cite{NatMod}.

Restricted to $\overline{\modm}_{g,n,\vec{\sigma}}^{\rm spin}$, the bundle $E_{g,n}$ has rank
\begin{equation}  \label{rank}
\text{rank\ }E_{g,n}=2g-2+\tfrac12(n+|\vec{\sigma}|)
\end{equation}
by the following Riemann-Roch calculation.   Orbifold Riemann-Roch takes into account the representation information \begin{align*}
h^0(\theta^{\vee})-h^1(\theta^{\vee})&=1-g+\deg \theta^{\vee}-\sum_{i=1}^n\lambda_{p_i}=1-g+1-g-\tfrac12n-\tfrac12|\vec{\sigma}|\\
&=2-2g-\tfrac12(n+|\vec{\sigma}|).
\end{align*}
We have $h^0(\theta^{\vee})=0$ since $\deg\theta^{\vee}=1-g-\frac12n<0$, and the restriction of $\theta^{\vee}$ to any irreducible component $C'$, say of type $(g',n')$, also has negative degree, $\deg\theta^{\vee}|_{C'}=1-g'-\frac12n'<0$.  Hence $h^1(\theta^{\vee})=2g-2+\frac12(n+|\vec{\sigma}|)$.  Thus $H^1(\theta^{\vee})$ gives fibres of a rank $2g-2+\frac12(n+|\vec{\sigma}|)$ vector bundle.


The forgetful map $p:\overline{\modm}_{g,n}^{\rm spin}\to\overline{\modm}_{g,n}$ and its restriction $p^{\bar{\sigma}}\hspace{-1mm}:\overline{\modm}_{g,n}^{\rm spin}\to\overline{\modm}_{g,n,\bar{\sigma}}$ forgets the spin structure and maps the twisted curve to its underlying coarse curve.
\begin{definition}
On $\bc^2=\langle e_0,e_1\rangle$ with metric $\eta(e_i,e_j)=\frac12\delta_{ij}$ define the CohFT
$$\Omega^{\text{spin}}_{g,n}(e_{\sigma_1}\otimes...\otimes e_{\sigma_n}):=p^{\bar{\sigma}}_*c(E_{g,n})\in H^{*}(\overline{\modm}_{g,n},\bq).
$$
\end{definition}
The different components of the moduli space behave like the different components of the moduli space of stable maps used in the CohFT associated to Gromov-Witten invariants.  The proof that $\{\Omega^{\text{spin}}_{g,n}\}$ defines a CohFT is a special case of a more general result  in \cite{LPSZChi} which proves that push-forwards of classes defined over moduli spaces of $r$-spin curves known as Chiodo classes produce dimension $r$ Coh\-FTs.  The proof in  \cite{LPSZChi} uses the Givental action on CohFTs, or instead one can use natural restriction properties of the bundles $E_{g,n}$ analogous to \eqref{hodres} proven in \cite{NorNew}.

The CohFT vanishes in degrees greater than $2g-2+n$ = the largest rank of $E_{g,n}$ on components of $\overline{\modm}_{g,n}^{\rm spin}$, hence its degree $2g-2+n$ terms define a CohFT without unit:
$$\Theta_{g,n}:=(-1)^n2^{g-1+n}p_*c_{2g-2+n}(E_{g,n})\in H^{4g-4+2n}.
$$
For example, $\Theta_{1,1}=3\psi_1$ and $\Theta_{0,n}=0$ for all $n$.
The factor of 2 in the bivector $\Delta=2(e_0\otimes e_0+e_1\otimes e_1)$ leads to the rescaling by a power of 2, which is quite naturally given by $2^{1-g}$, and we choose the further rescaling by $(-2)^{2g-2+n}$ to produce the elegant formula involving the Br\'ezin-Gross-Witten tau function of the KdV hierarchy in Proposition~\ref{bgwtheta} below.
Define the partition function
\[
Z^\Theta(\hbar,t_0,t_1,...)=\exp\sum_{g,n,\vec{k}}\frac{\hbar^{g-1}}{n!}\int_{\overline{\modm}_{g,n}}\Theta_{g,n}\cdot\prod_{j=1}^n\psi_j^{k_j}\prod t_{k_j}.
\]

\begin{proposition}  \label{bgwtheta}
\[Z^{\text{BGW}}(\hbar,t_0,t_1,...)=Z^\Theta(\hbar,t_0,t_1,...)+O(\hbar^7).\]
\end{proposition}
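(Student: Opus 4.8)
The plan is to reduce the statement to a comparison of intersection numbers and then to a finite verification. By Theorem~\ref{limBGW} we have $\log Z^{\text{BGW}}=\hbar^{-1}F^K$, so that $\log Z^{\text{BGW}}$ is precisely the generating function for the $\psi$-intersection numbers of the classes $K_{2g-2+n}$, while $\log Z^\Theta$ is by definition the generating function for the $\psi$-intersection numbers of the classes $\Theta_{g,n}$. Writing the difference as $\log Z^{\text{BGW}}-\log Z^\Theta=\sum_{g\ge 1}\hbar^{g-1}A_g$ with
\[
A_g=\sum_{n}\frac{1}{n!}\sum_{\vec k}\int_{\overline{\modm}_{g,n}}\big(K_{2g-2+n}-\Theta_{g,n}\big)\prod_{j=1}^n\psi_j^{k_j}t_{k_j},
\]
the assertion $Z^{\text{BGW}}=Z^\Theta+O(\hbar^7)$ is equivalent to $A_g=0$ for $1\le g\le 7$: since $Z^\Theta=\exp(\log Z^\Theta)$ is a power series in $\hbar$ with nonzero $\hbar^0$-term, and $Z^{\text{BGW}}-Z^\Theta=Z^\Theta\big(\exp(\log Z^{\text{BGW}}-\log Z^\Theta)-1\big)$, exponentiation preserves the order of vanishing of the difference. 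Thus it suffices to show that $K_{2g-2+n}$ and $\Theta_{g,n}$ have identical $\psi$-intersection numbers for every $g\le 7$; the corresponding statement for all $g$ is the content of Conjecture~\ref{KTheta}.

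Next I would reduce, for each fixed genus, to finitely many base correlators. Since $\deg\Theta_{g,n}=\deg K_{2g-2+n}=2g-2+n$ and $\dim\overline{\modm}_{g,n}=3g-3+n$, a nonzero correlator requires $\sum_j k_j=g-1$, a bound independent of $n$. Writing $\langle\tau_{k_1}\cdots\tau_{k_n}\rangle^\Theta_g=\int_{\overline{\modm}_{g,n}}\Theta_{g,n}\prod_j\psi_j^{k_j}$, the relation $\Theta_{g,n+1}=\psi_{n+1}\pi^*\Theta_{g,n}$ from \cite{NorNew}, together with $\psi_{n+1}\cdot[D_{j,n+1}]=0$ for the boundary divisor $D_{j,n+1}$ where points $j$ and $n+1$ collide, and $\pi_*\psi_{n+1}=\kappa_0=2g-2+n$, yields the $\Theta$-string equation
\[
\langle\tau_0\,\tau_{k_1}\cdots\tau_{k_n}\rangle^\Theta_g=(2g-2+n)\,\langle\tau_{k_1}\cdots\tau_{k_n}\rangle^\Theta_g,
\]
while the push-forward and pull-back formulas \eqref{pushf}, \eqref{pullb} of Section~\ref{sec:prop} give the analogous reduction for $K_{2g-2+n}$. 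Hence insertions of $\tau_0$ contribute only numerical factors, and for each $g$ the nontrivial data is the finite set of correlators with all $k_j\ge 1$ and $\sum_j k_j=g-1$, forcing $n\le g-1$.

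Finally I would evaluate these finitely many base correlators on both sides. The $K$-side numbers are the BGW numbers, computed from the Virasoro constraints~\eqref{virK} in the limit $\epsilon\to0$, or equivalently by topological recursion on the spectral curve $(x,y)=(\tfrac12 z^2,1/z)$ recovered in the proof of Theorem~\ref{limBGW}. The $\Theta$-side numbers are computed from the spin CohFT of Section~\ref{spincohft}: one expresses $\Theta_{g,n}=(-1)^n2^{g-1+n}p_*c_{2g-2+n}(E_{g,n})$ in the tautological ring via the push-forward of the Chern classes of $E_{g,n}$, as in the Chiodo-class framework of \cite{LPSZChi}, and integrates against $\psi$-monomials using Pixton's relations and the package admcycles. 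Comparing the two lists of numbers through genus $7$ then establishes $A_g=0$ for $g\le 7$.

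The hard part will be the last step: the explicit evaluation of the $\Theta$-class correlators at higher genus. This requires a tautological representative for $p_*c_{2g-2+n}(E_{g,n})$ followed by integration, whose computational cost grows rapidly with $g$ and is precisely what limits the verification to $O(\hbar^7)$. A conceptual all-orders proof would instead require showing that $Z^\Theta$ is a KdV tau function with the BGW initial condition, equivalently that the spin-CohFT classes $\Theta_{g,n}$ obey topological recursion on $(x,y)=(\tfrac12 z^2,1/z)$; this is exactly the unproven content of Conjecture~\ref{KTheta} and is not available here.
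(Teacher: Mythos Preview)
Your reduction steps---using Theorem~\ref{limBGW} to identify the $K$-side with $Z^{\text{BGW}}$, and then the dilaton equation to cut each genus down to the finitely many correlators with $n\le g-1$---are exactly what the paper does. One small correction: for the $K$-side $\tau_0$ reduction you cite \eqref{pushf} and \eqref{pullb}, but those alone do not give $\langle\tau_0\cdots\rangle^K_g=(2g-2+n)\langle\cdots\rangle^K_g$, since the analogue of $\Theta_{g,n+1}=\psi_{n+1}\pi^*\Theta_{g,n}$ for $K$ is precisely the unproven Conjecture~\ref{eigenfunction}. The paper instead just observes that $Z^{\text{BGW}}$ satisfies the same dilaton equation \eqref{dilaton}, which is the $m=0$ Virasoro constraint in the $\epsilon\to0$ limit of \eqref{virK}.

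The genuine gap is in your final step. You propose to compute the $\Theta$-correlators directly by producing a tautological representative of $p_*c_{2g-2+n}(E_{g,n})$ via the Chiodo/\cite{LPSZChi} formulas and integrating in admcycles. The paper states explicitly that this direct route is feasible only through genus~$4$; beyond that the spin-class computation is out of reach, which is exactly why the bound is not $O(\hbar^4)$. To get to genus~$7$ the paper replaces the direct spin computation by an inductive boundary argument that lives entirely in the $\psi,\kappa$-world. Concretely: one verifies numerically, for $g\le7$ and $n\le g-1$, the \emph{weak} pull-back identity \eqref{pullbweak} (this involves only $\psi$ and $\kappa$ intersections and is computationally much cheaper than handling $E_{g,n}$). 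Granting \eqref{pullbweak}, both $\int K_{2g-2+n}\cdot\omega$ and $\int\Theta_{g,n}\cdot\omega$, for any $\psi,\kappa$-polynomial $\omega$, reduce to $\int_{\overline{\modm}_g}(\,\cdot\,)\,\omega_0$ with $\omega_0$ a degree-$(g-1)$ $\kappa$-polynomial. By Faber--Pandharipande \cite[Proposition~2]{FPaRel} any such $\kappa$-monomial is a sum of boundary pushforwards; since both $\bk$ and $\{\Theta_{g,n}\}$ restrict to boundary strata as tensor products of their lower-genus versions, the comparison reduces by induction on $g$. Your proposal is missing this mechanism, and without it the verification stalls at genus~$4$ rather than~$7$.
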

\begin{remark}
The $O(\hbar^7)$ term is conjectured in \cite{NorNew} to vanish which is a consequence of Conjecture~\ref{KTheta}, that $K_{2g-2+n}=\Theta_{g,n}$.
\end{remark}
\begin{proof}
The strategy of proof is to prove equality of intersection numbers given by $Z^\Theta(\hbar,t_0,t_1,...)=e^{\hbar^{-1}F^K}$ and then to use Theorem~\ref{limBGW}.  Such intersection

1. Reduce the equality to finitely many terms for each genus $g$ as follows.  It is proven in \cite{NorNew} that the classes $\Theta_{g,n}$ satisfy the pull-back property
\[
\Theta_{g,n+1}=\psi_{n+1}\cdot\pi^*\Theta_{g,n}
\]
which implies that $Z^\Theta(\hbar,t_0,t_1,...)$ satisfies the following dilaton equation.
\begin{equation}  \label{dilaton}
\frac{\partial}{\partial t_0}Z^{\Theta}(\hbar,t_0,t_1,...)=\sum_{i=0}^{\infty}(2i+1)t_i \frac{\partial}{\partial t_i}Z^{\Theta}(\hbar,t_0,t_1,...)+\frac18Z^{\Theta}(\hbar,t_0,t_1,...)
\end{equation}
A consequence is that the intersection numbers $\int_{\overline{\modm}_{g,n}}\Theta_{g,n}\cdot\prod_{j=1}^n\psi_j^{k_j}$ for $n\geq g$ are uniquely determined by those for $n<g$.  Moreover, $Z^{\text{BGW}}(\hbar,t_0,t_1,...)$ also satisfies \eqref{dilaton} hence equality of $\log Z^{\Theta}$ and $\log Z^{\text{BGW}}$ for a given genus $g$ follows from the finitely many intersection numbers for $(g,n)$ where $n<g$.

2. {\em Assumption: \eqref{pullbweak} holds for all $(g,n)$ with $g\leq 7$ and $n\leq g-1$.}  Using the software admcycles we can only directly calculate terms on the RHS up to genus 4 so we instead use a weak argument as described in Remark~\ref{Kweak}.  Given the assumption above, let $\omega\in RH^{g-1}(\overline{\modm}_{g,n})$ be a polynomial in $\kappa$ classes and $\psi$ classes for $g\leq 7$ and $n\leq g-1$.  Then for $g>1$
\begin{align*}
\int_{\overline{\modm}_{g,n}}K_{2g-2+n}\cdot\omega&=\int_{\overline{\modm}_{g,n}}\psi_n\pi^*K_{2g-2+n-1}\cdot\omega=\int_{\overline{\modm}_{g,n-1}}K_{2g-2+n-1}\pi_*(\psi_n\omega_n)\\
&=\int_{\overline{\modm}_{g,n-1}}K_{2g-2+n-1}\omega_{n-1}=\int_{\overline{\modm}_{g}}\pi^*K_{2g-2}\cdot\omega_0(\kappa_1,\kappa_2,...,\kappa_{g-1})
\end{align*}
where the first equality used the assumption \eqref{pullbweak}.  Also, $\omega_n:=\omega$ and inductively $\pi_*(\psi_k\omega_k)=\omega_{k-1}\in RH^{g-1}(\overline{\modm}_{g,k-1})$ is a polynomial in $\kappa$ classes and $\psi$ classes.  This uses the inductive hypothesis that $\omega_{k}$ is a polynomial in $\kappa$ classes and $\psi$ classes, together with the property for $j<k$, $\psi_k\psi_j=\psi_k\pi^*\psi_j\in RH^{*}(\overline{\modm}_{g,k-1})$ and push-forward properties of $\kappa$ classes obtained by replacing $\kappa_j$ by $\kappa_{\ell_j}=\pi^*\kappa_j+\psi_n^j$ and push-forward all summands.  When $g=1$, the right hand side is instead $\int_{\overline{\modm}_{1,1}}K_1\cdot p$ for $p\in\bq$ a constant.  The same inductive argument proves for $g>1$
$$\int_{\overline{\modm}_{g,n}}\Theta_{g,n}\cdot\omega=\int_{\overline{\modm}_{g}}\Theta_{g}\cdot \omega_0(\kappa_1,\kappa_2,...,\kappa_{g-1})$$
and again when $g=1$, the right hand side is $\int_{\overline{\modm}_{1,1}}\Theta_{1,1}\cdot p$ for $p\in\bq$ a constant.

By a result of Faber and Pandharipande \cite[Proposition 2]{FPaRel}, a homogeneous degree $g-1$ monomial in the $\kappa$ classes is equal in the tautological ring to the sum of boundary terms, i.e. the sum of push-forwards of polynomials in $\psi$ classes and $\kappa$ classes by the the maps $(\phi_\Gamma)_*$.    But the pull-back of $K_{2g-2}$ and $\Theta_g$ to any boundary divisor is a tensor product of $K_{2g'-2+n'}$ terms, respectively $\Theta_{g',n'}$ terms, for $g'<g$.  The pull-back of $K_{2g-2}$ uses the pull-back of the exponential $\bk$ together with the weak vanishing of $K_m$ for $m>2g'-2+n'$.  By induction, $K_{2g-2+n}$ and $\Theta_{g,n}$ agree weakly, for lower genus, hence
\[ \int_{\overline{\modm}_{g,n}}(K_{2g-2+n}-\Theta_{g,n})\cdot\omega=0
\]
for any polynomial $\omega$ in $\psi$ classes and $\kappa$ classes (and in fact any tautological class) so in particular any polynomial in $\psi$ classes.  By Theorem~\ref{limBGW} this proves the required result under assumption 1 above, for $g\leq 7$ and $n\leq g-1$ and hence by the reduction in 1 above, for $g\leq 7$ and $n\geq 0$.

It remains to prove the assumption that \eqref{pullbweak} holds for all $(g,n)$ with $g\leq 7$ and $n\leq g-1$.  This is a relation between intersection numbers involving only polynomials in $\psi$ classes and $\kappa$ classes which have been well-studied.  It is achieved using different software, in particular by admcycles for $g\leq 7$ and $n\leq g-1$, hence the proposition follows.
\end{proof}

\subsection{Monotone Hurwitz numbers }   \label{monot}

The class $\bk=1+K_1+K_2+\dots$ of Theorem~\ref{numeric} naturally arises also in the study of monotone Hurwitz numbers. The \emph{monotone Hurwitz  numbers} $\vec h_{g,(k_1,\dots,k_n)}$ enumerate factorizations in the symmetric group $S(\sum k_i)$ into transpositions of the form
$$
(a_1,b_1)\circ(a_2,b_2)\circ\dots\circ(a_m,b_m),\quad a_i<b_i,
$$
such that the product has cyclic type $(k_1,\dots,k_n)$, the integers $m$ and $g$ are related by the Riemann-Hurwitz relation $m=2g-2+n+\sum k_i$, the subgroup generated by transpositions acts transitively on the set of permuted elements, and also the following additional monotonicity condition is satisfied: the bigger numbers $b_i$ of the pairs of transposed elements satisfy the inequality
$b_1\le b_2\le\dots\le b_m$.

For relations of the present section it would be more convenient to consider a slightly rescaled version of the class $\bk$, namely, define 
\[\bk^*=1-K_1+K_2-K_3+\dots=\exp\big(\sum (-1)^is_i\kappa_i\big)\] 
where $s_i$ are as in Section~\ref{sec:constr}. We have proved in Section~\ref{sec:van} that the differentials $\omega_{g,n}$ of topological recursion with the rational spectral curve
$$
x=\frac{z^2}{2},\qquad y=\frac{z}{1-z^2}.
$$
are given explicitly for $2g-2+n>0$ by
\begin{equation}\label{intnum}
w_{g,n}(z_1,\dots,z_n)=\sum_{k_1,\dots,k_n}\int_{\overline{\modm}_{g,n}}
\bk^*\;\psi_1^{k_1}\dots\psi_n^{k_n}~\prod_{i=1}^n\frac{(2k_i+1)!!dz_i}{z^{2k_i+2}_i}
\end{equation}
(it corresponds to the specialization $\epsilon=-1$ in~\eqref{yfuncK}).

It turns out that the monotone Hurwitz numbers are governed by the very same topological recursion with a special choice of the expansion point $z=1$ and the local coordinate $X$ at this point defined by the change $z=\sqrt{1-4X}$.

\begin{proposition}[\cite{DDMTop}] In the stable range, that is, for each $(g,n)$ with $2g-2+n>0$, the power expansions of the above differential $\omega_{g,n}(z_1,\dots,z_n)$ in the local coordinates $X_1,\dots, X_n$ obtained by the substitution $z_i=\sqrt{1-4 X_i}$ are given by
$$
\omega_{g,n}=\sum_{k_1,\dots,k_n}\vec h_{g,(k_1,\dots,k_n)}\prod_{i=1}^n k_iX_i^{k_i-1}dX_i.
$$
\end{proposition}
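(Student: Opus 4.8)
The plan is to prove the identity by recognizing both sides as expansions of the \emph{same} family of topological recursion differentials, read off at two different points of the spectral curve. By the results of Section~\ref{sec:van}, equation~\eqref{intnum} tells us that the $\omega_{g,n}$ are exactly the topological recursion correlators of the genus-zero curve $(x,y)=(\tfrac12 z^2,\tfrac{z}{1-z^2})$ on $\bp^1$ (the $\epsilon=-1$ specialization). The only finite ramification point of $x$ is $z=0$, where $dx=z\,dz$ vanishes, so each stable $\omega_{g,n}$ is a meromorphic differential in every variable with poles only along $z_i=0$, and in particular is \emph{holomorphic} at the points $z_i=1$. Hence $z_i=\sqrt{1-4X_i}$, equivalently $X_i=\tfrac14(1-z_i^2)=\tfrac12(x(1)-x)$, is a genuine local coordinate vanishing at $z_i=1$, and expanding $\omega_{g,n}$ there yields a Taylor series $\sum_{\vec k}c_{g,\vec k}\prod_i k_iX_i^{k_i-1}dX_i$ with all $k_i\ge1$. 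The claim is precisely that $c_{g,\vec k}=\vec h_{g,\vec k}$.

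Second, I would invoke the characterization of monotone Hurwitz numbers by topological recursion from \cite{DDMTop}: the generating differentials of the $\vec h_{g,\vec k}$ satisfy topological recursion on a spectral curve whose enumeration point is a simple pole of $\eta=y\,dx$. This is the decisive structural match. On our curve $\eta=\tfrac{z^2}{1-z^2}\,dz$ has simple poles precisely at $z=\pm1$, and the natural enumeration coordinate attached to such a pole, at which $x$ is regular, is $x-x(1)=-2X$, i.e.\ exactly $X$. Since the stable correlators of a genus-zero spectral curve are uniquely determined by the ramification points of $x$, the one-form $\eta=y\,dx$, and the Bergman kernel $\omega_{0,2}=B(z_1,z_2)=dz_1\,dz_2/(z_1-z_2)^2$, it suffices to match these data with the curve of \cite{DDMTop}.

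The concrete verifications are then: (i) expand $\eta=-\tfrac{\sqrt{1-4X}}{2X}\,dX$ at $X=0$ and identify its regular part $\sum_{j\ge0}C_jX^j\,dX$ (Catalan coefficients) with the monotone single-Hurwitz disk generating function, which pins down the normalization of $X$ implicit in the prefactor $k_iX_i^{k_i-1}dX_i=d(X_i^{k_i})$; (ii) expand $B$ in the coordinates $X_1,X_2$ at $z_1=z_2=1$ and check it reproduces the $\omega_{0,2}$ of \cite{DDMTop}; and (iii) exhibit the Möbius change of global coordinate identifying $(\tfrac12z^2,\tfrac{z}{1-z^2})$ with the spectral curve of \cite{DDMTop}, sending $z=0$ to their branch point and $z=1$ to their enumeration puncture. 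Once (i)--(iii) hold, uniqueness of the topological recursion solution forces every stable $\omega_{g,n}$ to coincide with the monotone Hurwitz correlator, and reading off the $X_i$-expansion gives the stated formula.

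The main obstacle is step (iii) together with the normalization bookkeeping in (i): one must keep track that $z=1$ is a \emph{pole} of $\eta$, not a ramification point, so the enumeration is read at a regular point of the $\omega_{g,n}$ in the affine-in-$x$ coordinate $X$, and the automorphism/normalization factors must be tracked so that the Catalan expansion of the disk yields exactly $d(X_i^{k_i})$ rather than a rescaling. If the parametrization of \cite{DDMTop} differs from ours by more than a Möbius transformation compatible with the involution $z\mapsto-z$ and the branch point $z=0$, the fallback is the more laborious self-contained route: derive the monotone cut-and-join loop equations for the $\vec h_{g,\vec k}$ directly and verify that, under $z_i=\sqrt{1-4X_i}$, they are equivalent to the recursion~\eqref{recursion}. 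This is guaranteed to close but bypasses the clean uniqueness argument.
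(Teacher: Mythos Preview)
Your approach is essentially the paper's: match the spectral curve of \cite{DDMTop} with $(x,y)=(\tfrac12 z^2,\tfrac{z}{1-z^2})$ by a global change of coordinate on $\bp^1$, then invoke uniqueness of the stable correlators under topological recursion. The paper carries this out explicitly with the M\"obius map $Z=\tfrac{2}{1-z}$, which sends the DDM curve $X=\tfrac{Z-1}{Z^2}$, $Y=-Z$ to $x=\tfrac12 z^2$, $y=\tfrac{1}{1-z}$, and identifies the enumeration point $Z=\infty$ with $z=1$ and $X=\tfrac14(1-z^2)$ exactly as you predicted.

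The one substantive point you are missing is the resolution of the obstacle you yourself flag. You correctly anticipate that a M\"obius change alone will not land on $y=\tfrac{z}{1-z^2}$; indeed it lands on $y=\tfrac{1}{1-z}$. The paper's fix is a one-line observation rather than your fallback to cut-and-join: adding an \emph{even} function of $z$ to $y$ leaves all stable $\omega_{g,n}$ unchanged (only the odd part of $y$ enters the recursion kernel), and the odd part of $\tfrac{1}{1-z}$ is exactly $\tfrac{z}{1-z^2}$. With this, your steps (i) and (ii) become unnecessary, since the Bergman kernel and $y\,dx$ are matched on the nose by the coordinate change, and the proof closes immediately.
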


\begin{proof}
In fact, a slightly different but equivalent form of topological recursion for the monotone Hurwitz numbers is derived in~\cite{DDMTop}. Its spectral curve data corresponds to the differential $\omega_{0,1}=-Y\, dX$ with
$$
X(Z)=\frac{Z-1}{Z^2},\quad Y(Z)=-Z,\qquad X_i=X(Z).
$$

Here $Z$ is the global affine coordinate on the spectral curve $\Sigma=\mathbb C P^1$ and $X$ is a local coordinate at $Z=\infty$ that extends as a degree~$2$ global rational function on the spectral curve. Applying an affine change $Z=\frac{2}{1-z}$ of the coordinate on the spectral curve we can rewrite its equation in an equivalent form
$$
x(z)=\frac12-2X=\frac{z^2}{2},\quad y(z)=-\frac12Y=\frac{1}{1-z}.
$$
Both spectral curves produce the same topological recursion since $-Y\,dX=-y\,dx$.
The expansion point $Z=\infty$ corresponds to $z=1$ and the function $X$ takes in the $z$ coordinate the form $X=\frac14-\frac12 x=\frac{1-z^2}{4}$ with the inverse change $z=\sqrt{1-4 X}$.
Besides, adding an even summand to the $y$-function does not affect the differentials $\omega_{g,n}$ of topological recursion for $(g,n)\ne(0,1)$. Therefore, we can replace the $y$-function in the above equation of the spectral curve by its odd part with respect to the involution $z\mapsto -z$ which is
$$
\frac12\Bigl(\frac1{1-z}-\frac{1}{1+z}\Bigr)=\frac{z}{1-z^2}.
$$
This proves an equivalence of the two forms of topological recursion for the monotone Hurwitz numbers.
\end{proof}
In recent work \cite{BDKSExp}, the structure of partition functions for a large class of Hurwitz numbers, which includes simple and monotone Hurwitz numbers, has been studied.  One property of this structure is the relationship of these Hurwitz numbers to topological recursion.

The proposition implies that the monotone Hurwitz numbers can be expressed as linear combinations of the intersection numbers involving $\psi$ and $\kappa$ classes. More explicitly, it is easy to compute that the form $\frac{(2k+1)!!dz}{z^{2k+2}}$ is given in the local coordinate $X$ at $z=1$ by
$$
\frac{(2k+1)!!dz}{z^{2k+2}}=\sum_{i=1}^\infty(2i+1)(2i+3)\dots(2(i+k)-1)\binom{2i}{i}i\,X^{i-1}dX.
$$
Substituting into~\eqref{intnum} and selecting the coefficient of a particular monomial in $X$-variables we arrive at the following known analog of the ELSV formula, see~\cite{ALSRam,DKaMon,ELSVHur}. Denote
$$
\Phi_k(\psi)=1+(2k+1)\psi+(2k+1)(2k+3)\psi^2+\dots=\sum_{i=0}^\infty\left(\prod_{j=1}^i(2(j+k)-1)\right)\psi^i
$$
where we use the convention that the empty product is 1.
\begin{corollary}
The monotone Hurwitz numbers in the case $2g-2+n>0$ can be computed as the following intersection numbers
\begin{align*}
\vec h_{g,(k_1,\dots,k_n)}&=\prod_{i=1}^n\!\!\binom{2k_i}{k_i}~\vec P_{g,n}(k_1,\dots,k_n),\\
\vec P_{g,n}(k_1,\dots,k_n)&=\int_{\overline{\modm}_{g,n}}
\bk^*\cdot\Phi_{k_1}(\psi_1)\dots\Phi_{k_n}(\psi_n).
\end{align*}
\end{corollary}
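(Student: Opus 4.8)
The plan is to exploit that the two preceding results express the \emph{same} family of differentials $\omega_{g,n}$ in two different coordinate systems. Equation~\eqref{intnum} writes $\omega_{g,n}$ in the global coordinates $z_i$ through the one-forms $\frac{(2k_i+1)!!dz_i}{z_i^{2k_i+2}}$, while the Proposition of~\cite{DDMTop} writes the very same $\omega_{g,n}$ in the local coordinates $X_i=\frac{1-z_i^2}{4}$, with the monotone Hurwitz numbers appearing as the coefficients of $\prod_i k_iX_i^{k_i-1}dX_i$. Since both are expansions of one and the same object, I would obtain the identity by comparing the coefficient of a fixed monomial $\prod_i X_i^{k_i-1}dX_i$ on the two sides. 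To avoid the notational clash between the cycle-type parts and the $\psi$-exponents, I would rename the summation indices in~\eqref{intnum} as $m_1,\dots,m_n$ and reserve $k_1,\dots,k_n$ for the parts.

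First I would substitute the stated change-of-coordinate expansion
$$\frac{(2m+1)!!\,dz}{z^{2m+2}}=\sum_{i=1}^\infty\Big(\prod_{j=1}^{m}(2(i+j)-1)\Big)\binom{2i}{i}\,i\,X^{i-1}dX$$
into~\eqref{intnum}, applied in the $\ell$th factor with $m=m_\ell$ the exponent of $\psi_\ell$. Extracting the coefficient of $\prod_\ell X_\ell^{k_\ell-1}dX_\ell$ forces $i=k_\ell$ in each factor, producing a global scalar $\prod_\ell\binom{2k_\ell}{k_\ell}k_\ell$ together with a weight $\prod_{j=1}^{m_\ell}(2(k_\ell+j)-1)$ attached to the power $\psi_\ell^{m_\ell}$.

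The key observation is then that the remaining sum over the exponents $m_\ell$ reassembles into the series $\Phi_{k_\ell}$. Indeed, by its very definition $\Phi_k(\psi)=\sum_{i\ge0}\big(\prod_{j=1}^i(2(j+k)-1)\big)\psi^i$, so $\sum_{m_\ell\ge0}\big(\prod_{j=1}^{m_\ell}(2(k_\ell+j)-1)\big)\psi_\ell^{m_\ell}=\Phi_{k_\ell}(\psi_\ell)$. Hence the coefficient of $\prod_\ell X_\ell^{k_\ell-1}dX_\ell$ in~\eqref{intnum} equals $\big(\prod_\ell\binom{2k_\ell}{k_\ell}k_\ell\big)\int_{\overline{\modm}_{g,n}}\bk^*\cdot\prod_\ell\Phi_{k_\ell}(\psi_\ell)$. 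Although each $\Phi_{k_\ell}$ is a formal power series, paired against the bounded-degree class $\bk^*$ on $\overline{\modm}_{g,n}$ only finitely many terms survive by dimension, so the integral is well defined and equals $\big(\prod_\ell\binom{2k_\ell}{k_\ell}k_\ell\big)\vec P_{g,n}(k_1,\dots,k_n)$.

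Comparing with the Hurwitz-number expansion of the Proposition, whose coefficient of $\prod_\ell X_\ell^{k_\ell-1}dX_\ell$ is $\vec h_{g,(k_1,\dots,k_n)}\prod_\ell k_\ell$, the common factor $\prod_\ell k_\ell$ cancels and the claimed formula follows at once. I do not expect a genuine obstacle here: the corollary is a direct coefficient extraction from two equivalent generating functions, and the only care required is bookkeeping the index shift $i=k_\ell$ in the change-of-variables expansion so that the per-variable product of odd factors matches precisely the coefficients defining $\Phi_{k_\ell}$.
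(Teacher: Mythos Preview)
Your proposal is correct and follows exactly the approach the paper indicates: substitute the given $X$-expansion of $\frac{(2k+1)!!\,dz}{z^{2k+2}}$ into~\eqref{intnum} and extract the coefficient of a fixed monomial $\prod_\ell X_\ell^{k_\ell-1}dX_\ell$, comparing with the Proposition's Hurwitz-number expansion. The paper states this in a single sentence without details, so your careful bookkeeping (renaming the $\psi$-exponents to $m_\ell$ and recognizing that the residual sum over $m_\ell$ reassembles into $\Phi_{k_\ell}(\psi_\ell)$) is precisely what the paper leaves to the reader.
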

\noindent Note that, by definition, $\vec P_{g,n}$ is a symmetric polynomial in $k_1,\dots,k_n$.  A consequence of Theorem~\ref{numeric} is that for $g>1$, 
\[\vec P_{g,n}(-\tfrac12,-\tfrac12,\dots,-\tfrac12)=0
\]
since $\Phi_{-\frac12}(\psi)=1$ and $\deg\bk^*<3g-3+n$ weakly.  More generally, 
\[\deg\Phi_{-\frac12(2m+1)}(\psi)=m\]
hence by Theorem~\ref{numeric} the following vanishing properties of the polynomials $\vec P_{g,n}$ evaluated on negative half-integers holds.
\begin{corollary}
For $m_i\in\bn$,
\[ \sum_{i=1}^n m_i<g-1\quad\Rightarrow\quad \vec P_{g,n}\big(-\tfrac12(2m_1+1),\dots,-\tfrac12(2m_n+1)\big)=0.
\]
\end{corollary}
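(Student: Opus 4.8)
The plan is to recognize $\vec P_{g,n}$ evaluated at these special points as an intersection number of $\bk^*$ against a \emph{polynomial} in the $\psi$ classes of controlled degree, and then to invoke the vanishing of Theorem~\ref{numeric} in the form already used for $\bk^*$ in this section. Concretely, writing $k_i=-\tfrac12(2m_i+1)$, the preceding corollary gives
\[
\vec P_{g,n}\bigl(-\tfrac12(2m_1+1),\dots,-\tfrac12(2m_n+1)\bigr)=\int_{\overline{\modm}_{g,n}}\bk^*\cdot\prod_{i=1}^n\Phi_{k_i}(\psi_i),
\]
so everything reduces to bounding the degree of the $\psi$-polynomial $\prod_i\Phi_{k_i}(\psi_i)$ (note $n>0$, as required by Theorem~\ref{numeric}).

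First I would compute the degree of $\Phi_{-\frac12(2m+1)}(\psi)$. The coefficient of $\psi^i$ in $\Phi_k(\psi)$ is $\prod_{j=1}^i\bigl(2(j+k)-1\bigr)$; substituting $k=-m-\tfrac12$ turns the $j$-th factor into $2(j-m-1)$, which vanishes exactly at $j=m+1$. Hence every coefficient with $i\ge m+1$ is zero, while the coefficient of $\psi^m$ equals $\prod_{j=1}^m 2(j-m-1)=(-2)^m m!\ne0$. Therefore $\Phi_{-\frac12(2m+1)}(\psi)$ is a polynomial of degree exactly $m$ (consistent with $\Phi_{-1/2}=1$). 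It follows that $P:=\prod_{i=1}^n\Phi_{k_i}(\psi_i)$ is a polynomial in the $\psi$ classes all of whose monomials $\psi_1^{a_1}\cdots\psi_n^{a_n}$ satisfy $a_i\le m_i$, and hence have total degree $\sum_i a_i\le\sum_i m_i<g-1$.

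The final step is to apply the vanishing $\int_{\overline{\modm}_{g,n}}\bk^*\cdot Q=0$ for every $\psi$-monomial $Q$ of degree $<g-1$, and conclude by linearity that $\int_{\overline{\modm}_{g,n}}\bk^*\cdot P=0$. This is precisely the $\bk^*$ analog of Theorem~\ref{numeric}, recorded in this section as the statement that $\deg\bk^*<3g-3+n$ weakly. I do not expect a genuine obstacle: the only point needing verification is that Theorem~\ref{numeric}, stated for $\bk$, transfers to $\bk^*$. This it does by the identical argument, since $\bk^*$ is the specialization $h_k=(2k+1)!!$, whose $\epsilon$-rescaled spectral curve is $x=\tfrac12 z^2$, $y=\tfrac{z}{\epsilon-z^2}$, so that $y^{-1}=\epsilon z^{-1}-z$ is regular (indeed polynomial) in $\epsilon$; the induction of Theorem~\ref{numeric} then produces no negative powers of $\epsilon$, giving the desired weak vanishing. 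Substituting $P$ back yields $\vec P_{g,n}\bigl(-\tfrac12(2m_1+1),\dots,-\tfrac12(2m_n+1)\bigr)=0$, as required.
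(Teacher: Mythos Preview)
Your proof is correct and follows exactly the paper's approach: compute $\deg\Phi_{-\frac12(2m+1)}(\psi)=m$, so the $\psi$-polynomial has total degree $\sum m_i<g-1$, and then apply Theorem~\ref{numeric}. Your verification that Theorem~\ref{numeric} transfers from $\bk$ to $\bk^*$ via the spectral curve is fine; it is even quicker to observe that $K^*_m=(-1)^mK_m$ (since $\bk^*(t)=\bk(-t)$), so the vanishing for $\bk$ and $\bk^*$ is literally the same statement.
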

By direct calculation, via the Witten-Kontsevich Theorem~\ref{KW} or with admcycles, one can verify:
\[\int_{\overline{\modm}_{g}}K_{3g-3}=(-1)^g\frac{B(2g)}{2g(2g-2)}=|\chi(\modm_g)|,\quad 2\leq g\leq 9\]
where $\chi(\modm_g)$ is the orbifold Euler characteristic of the moduli space of curves.
Using the spectral curve and recursions between the 1-point correlators proven in \cite{CDoGen} it is likely that we can prove this formula for all $g\geq 2$, using a method analogous to the calculation of the orbifold Euler characteristic of the moduli space of curves by Harer and Zagier in \cite{HZaEul} from their recursions between the 1-point correlators.


\end{document}